\documentclass[a4paper,10pt,reqno]{amsart}

\usepackage{amsmath,graphics}
\usepackage{amssymb}
\usepackage{amsfonts}
\usepackage{mathrsfs}
\usepackage{latexsym}
\usepackage{eucal}
\usepackage[dvips]{graphicx}
\usepackage{relsize}
\usepackage{parskip}

\theoremstyle{plain}
\newtheorem{thm}{Theorem}[section]
\newtheorem{propo}[thm]{Proposition}
\newtheorem{lem}[thm]{Lemma}
\newtheorem{cor}[thm]{Corollary}
\newtheorem{conj}[thm]{Conjecture}

\theoremstyle{definition}

\renewcommand{\Re}{{\rm Re}}
\renewcommand{\Im}{{\rm Im}}
\newcommand{\R}{\mathbb{R}}
\newcommand{\C}{\mathbb{C}}

\newcommand{\N}{\mathbb{N}}

\newcommand{\Prob}{{\mathbb P}}
\newcommand{\Exp}{{\mathbb E}}

\newcommand{\lt}{{\mathscr L}}

\newcommand{\D}{{\mathbb D}}

\newcommand{\HN}{\mathcal{H}_N}

\begin{document}
\bibliographystyle{plain}
\title[Twisted Transfer operators]{Randomly twisted transfer operators and singular values statistics.}

\begin{abstract}
In this paper, we investigate the singular values of a natural family of transfer operators twisted by large random permutation matrices. In the large $N$ limit, we
obtain a Weyl law for its singular values, valid asymptotically almost surely with rapid decay. We also extend the so-called polynomial method to an infinite dimensional setting which implies a "smooth" probabilistic Weyl law for singular values.
\end{abstract}

\author[F.~Naud]{Fr\'ed\'eric Naud}
\address{%
Fr\'ed\'eric Naud\\
Sorbonne universit\'e,\ Institut Math\'ematique de Jussieu Paris-Rive Gauche\\
4, place Jussieu\\
Boite Courrier 247\\
75252 Paris Cedex 05\\
France.
}
\email{frederic.naud@imj-prg.fr}

\subjclass[2000]{37C30, 37D20}

\keywords{Ruelle eigenvalues, Analytic expanding maps, 
Transfer operators, Random matrices}

\maketitle

\tableofcontents

\section{Setting and introduction}
Transfer operators are ubiquitous objects in the study of hyperbolic dynamical systems and their asymptotic statistics. In the early works of the pioneers, they arised through Markov codings and where used as a basic tool to construct invariant measures and establish decay of correlations, see the classics \cite{Bowen_Book, Baladi1,PP}. For analytic dynamical systems, general transfer operators can be described as weighted sums of composition operators acting on suitable holomorphic function spaces. They were pioneered by David Ruelle in \cite{Ruelle}. More recently, we mention the fairly general papers of Bandtlow-Jenkinson \cite{BJ1,BJ2}, see also \cite{BN}, for sharp estimates on the eigenvalues. 

Let us be more specific on the models we want to study here.
Let $\Omega_0$ be a bounded, simply connected, bounded open set in $\C$. We assume that the boundary $\partial \Omega_0$ is at least $C^2$ regular.
Let $\gamma_1,\ldots,\gamma_d:\Omega_0\rightarrow \Omega_0$ be holomorphic mappings, and $G:\Omega_0\rightarrow \C$ be a weight function. We assume that the following holds.
\begin{enumerate}
 \item Each map $\gamma_j$ extends continuously to $\overline{\Omega_0}$.
 \item For all $j=1,\ldots,d$, we have $\overline{\gamma_j(\Omega_0)}\subset \Omega_0$.
 \item The weight $G$ extends continuously to $\overline{\Omega_0}$ and is holomorphic on $\Omega_0$.
\end{enumerate}
In addition, we pick $U_1,\ldots,U_d\in U_N(\C)$ to be {\it permutation matrices}, chosen uniformly and independently in the symetric group $\mathcal{S}_N$. The twisted transfer operator $\lt_N$ acts on holomorphic, vector valued functions $F:\Omega_0\rightarrow \C^N$ by
$$\lt_N(F)(z):=\sum_{j=1}^d e^{G(\gamma_j z)}U_jF(\gamma_j z).$$
These type of operators appear naturally in the geometric context of covers of hyperbolic surfaces, see for example in \cite{MN1, MCN1,PS1}, where their spectral properties are closely related to Laplace spectrum.
We will let $\lt_N$ act on the Bergman space $$\mathcal{H}_N:=H^2(\Omega_0,\C^N),$$ defined by
$$\mathcal{H}_N:=\left\{ F:\Omega_0\rightarrow \C^N\ :\ F\ \mathrm{is\ holomorphic\ and}\ \int_{\Omega_0}\Vert F(z)\Vert^2dm(z)<\infty\right \},$$
where $m$ stands for Lebesgue measure on $\C$ and $\Vert .\Vert$ is the usual Hermitian norm on $\C^N$. The operator $\lt_N$ is then compact and trace class, as composition operators by strict holomorphic contractions have this property, see \cite{BJ1}. The spectrum of $\lt_N$ is therefore discrete outside $\{0\}$ and we denote by $(\lambda_j(\lt_N))_{j\in \N}$ this sequence of eigenvalues with 
$\lim_{j\rightarrow \infty} \lambda_j(\lt_N)=0$. 

\noindent {\bf A typical example}.
Let $\Omega_0$ be the open disc given by 
$$\Omega_0=D\left(1,\frac{3}{2}\right)=\left \{z \in \C\ :\ \vert z-1\vert<\frac{3}{2}\right\}.$$
For all $j=1,\ldots,d\leq \infty$, set $\gamma_j(z)=\frac{1}{j+z}$. Then it is easy to see that each M\"obius transformation $\gamma_j$ satisfies
$$\gamma_j(\Omega_0)\subset D(1,1).$$
 The associated transfer operator with weights $e^{G_j(z)}=\frac{1}{(j+z)^{2\sigma}}$ is often called the Mayer-Perron-Frobenius operator, see \cite{Mayer}, and acts on scalar functions by the popular formula
 $$\lt_{\sigma}(f)(x):=\sum_{j=1}^d \frac{1}{(j+z)^{2\sigma}}f\circ \gamma_j(x).$$
 When $d$ is finite, which is the case we only consider in this paper, it is often associated to the dynamics of continued fractions with bounded entries and the "open" Gauss map. The limit set generated by $\gamma_1,\ldots,\gamma_d$ is a Cantor subset of $[0,1]$ with positive Hausdorff dimension. These transfer operators are standard models of "open chaotic systems" and are useful toy-models in quantum chaos, see for example the paper of Arnoldi-Faure-Weich \cite{AFW}.

In this paper we will focus on the behaviour of spectra and singular values of general operators $\lt_N$ in the large $N$ regime, when permutation matrices $U_1,\ldots,U_d$ are chosen uniformly independently, $d$ being fixed. For all probabilistic statements, $\Prob_N$ will denote the probability of an event while $\Exp_N$ will denote the expectation. An event $\mathcal{E}_N$ is said to occur asymptotically almost surely (a.a.s.) if we have $\lim_{N\rightarrow +\infty} \Prob_N(\mathcal{E}_N)=1$.

A first basic result that we will prove is a deterministic upper bound on the number of eigenvalues as $N$ goes to infinity.
\begin{thm}
\label{Eigest}
 Using the above notations, there exists a constant $C_0>0$ such that for all $r>0$,
 $$\#\{ j\in \N :\ \vert \lambda_j(\lt_N)\vert \geq r^{-1}\}\leq C_0 r N.$$
\end{thm}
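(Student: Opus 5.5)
Since $\lt_N$ is trace class, the plan is to control eigenvalues by singular values via Weyl's inequality, and to bound the singular values uniformly in $N$ by exploiting the exponential decay coming from the strict contraction $\overline{\gamma_j(\Omega_0)}\subset\Omega_0$. The factor $N$ will come from the fact that $\HN\simeq H^2(\Omega_0)\otimes\C^N$, so $\lt_N$ has essentially $N$ copies of each singular direction of a scalar model. Throughout, the permutation matrices $U_j$ are unitary, and only $\Vert U_j\Vert=1$ is used, which is why the bound is deterministic.

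\textbf{Step 1: a finite-rank approximation.} Pick a compact set $K\subset\Omega_0$, for instance a closed smooth neighbourhood, containing $\bigcup_j\overline{\gamma_j(\Omega_0)}$. For $F\in\HN$, the values $F(\gamma_j z)$ only involve $F|_K$. I would build a rank-$n$ approximation of the restriction map $R:H^2(\Omega_0)\to H^\infty(K)$ with error $\leq C\rho^{n}$ for some $\rho<1$. Concretely, after the Riemann map one may take $\Omega_0$ to be a disc and $K$ a smaller disc, and truncate the Taylor expansion. For a general $C^2$ domain one can use Faber polynomials instead, or simply compose with the conformal map, whose Jacobian is harmless for a Bergman norm comparison.

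This gives a scalar operator $P_n$ of rank $n$ with $\Vert (R-P_n)f\Vert_{L^\infty(K)}\le C\rho^n\Vert f\Vert_{H^2}$. Tensoring with $\mathrm{Id}_{\C^N}$ yields an operator of rank $nN$ with the same error bound for vector-valued $F$; the sup-norm is taken of $\Vert F\Vert_{\C^N}$, and the estimate survives because it is linear and uniform on coordinates. To check this, I would apply the scalar estimate to $\langle F,v\rangle$ for unit $v$, or work directly with the Taylor coefficients in $\C^N$.

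\textbf{Step 2: approximation numbers.} Write $\lt_N=\sum_j M_j\,C_{\gamma_j}\,(U_j\otimes R)$, which has the form $A\circ R_N$ with $A:H^\infty(K,\C^N)\to\HN$ bounded independently of $N$. Indeed $\Vert U_j\Vert=1$, $e^G$ is bounded on $\overline{\Omega_0}$, and $\Vert\cdot\Vert_{H^2(\Omega_0)}\le m(\Omega_0)^{1/2}\Vert\cdot\Vert_\infty$. Hence the approximation numbers satisfy
\[
s_{nN+1}(\lt_N)\le C_1\rho^n,
\]
with $C_1$ and $\rho$ independent of $N$.

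\textbf{Step 3: from singular values to eigenvalues.} Weyl's multiplicative inequality gives
\[
\prod_{k\le m}|\lambda_k|\le\prod_{k\le m}s_k.
\]
Let $M=\#\{k:|\lambda_k|\ge r^{-1}\}$. Then
\[
r^{-M}\le\prod_{k\le M}s_k\le \prod_{n\ge0}\bigl(\min(C_1,C_1\rho^n)\bigr)^{N}
\]
up to the boundary block, using that $s_k\le\Vert\lt_N\Vert\le C_1$. A more careful count is better: for $k\le M$ one gets $\sum_{k\le M}\log s_k\le N\sum_{n< M/N+1}(\log C_1+n\log\rho)$, which is roughly $-cM^2/N+C'M$. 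Combining with $-M\log r$ gives $cM^2/N\le M(\log r+C')$, that is, $M\le C N(1+\log r)$. This is even better than $C_0rN$ for $r\ge1$. For $r<1$ the set is empty once $r^{-1}>\Vert\lt_N\Vert$; the remaining range $r\in[1/C_1,1]$ is absorbed in the constant, using $M\le C N$ there.

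The main obstacle is Step 1 for a general $C^2$ simply connected domain, namely getting an $N$-uniform exponentially good rank-$nN$ approximation. I would first try to reduce to the disc via a Riemann map $\phi$, which extends to a $C^1$ diffeomorphism of closures by Kellogg–Warschawski. The Bergman spaces are then unitarily equivalent through $f\mapsto(f\circ\phi)\phi'$, and $K$ maps to a compact subset of the disc, where Taylor truncation gives the bound directly.
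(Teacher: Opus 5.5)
Your argument is correct, but it takes a different route from the paper.

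\textbf{The paper's route.} It proves only the trace-norm bound $\Vert\lt_N\Vert_1\le C_3N$ (Proposition \ref{tracest1}). This comes from estimating $\sum_{j,\ell}\Vert\lt_N(\delta_j\otimes\mathbf{g}_\ell)\Vert$ in the conformal basis $\mathbf{g}_\ell=\psi'\sqrt{(\ell+1)/\pi}\,\psi^\ell$. It then concludes in one line, using the additive Weyl inequality and a Chebyshev count:
$$\#\{j:\ |\lambda_j|\ge r^{-1}\}\le r\sum_j|\lambda_j|\le r\Vert\lt_N\Vert_1 .$$
This holds for every $r>0$ at once, and the same trace bound gives (\ref{singest1}), which is reused for Theorem \ref{WS}.

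\textbf{Your route.} You extract the finer block structure $s_{nN+1}(\lt_N)\le C_1\rho^n$, using essentially the same Taylor truncation in the conformal coordinate, tensored with $\mathrm{Id}_{\C^N}$. You then feed this into the multiplicative Weyl (Horn) inequality. The cost is a separate monotonicity argument for $r<1$. The gain is the stronger count $M\le CN(1+\log r)$, which is the expected order for exponentially decaying singular values. As a by-product you also get $|\lambda_k(\lt_N)|\le C_1\rho^{(k+1)/(2N)-1}$.

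\textbf{Two small corrections.}

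First, the obstacle you flag at the end is not one. The map $g\mapsto\psi'\,(g\circ\psi)$ is unitary from $H^2(\D)$ onto $H^2(\Omega_0)$ for any simply connected $\Omega_0$. Since $K$ is compact in $\Omega_0$, you only need $\sup_K|\psi'|<\infty$ and $\sup_K|\psi|<1$, so no boundary regularity (Kellogg--Warschawski) is needed.

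Second, the displayed bound $\sum_{k\le M}\log s_k\le N\sum_{n<M/N+1}(\log C_1+n\log\rho)$ is not valid as written. It counts the final partial block, plus an extra empty one, with full multiplicity $N$. For large $M$ those terms are negative, so the inequality goes the wrong way. Bound term by term instead:
$$\log s_k\le \log C_1+\lfloor (k-1)/N\rfloor\log\rho,\qquad \lfloor (k-1)/N\rfloor\ge (k-N)/N .$$
This gives
$$|\log\rho|\,\frac{M+1}{2N}\le \log r+\log C_1+|\log\rho| ,$$
which is exactly your conclusion.
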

This upper bound follows straightforwardly from an estimate of the trace norm of $\lt_N$ and Weyl inequalities, see $\S 2$. A natural and difficult problem is the following conjecture.

\begin{conj} Assume that $d\geq 2$. There exists an absolutely continuous measure $\nu$, compactly supported in $\C$, locally finite in $\C^*$, such that for all $\varphi \in C_0^\infty (\C^*)$, we have
 $$\lim_{N\rightarrow \infty} \frac{1}{N} \Exp_N\left (\sum_{\lambda\in \mathrm{Sp}(\lt_N)} \varphi(\lambda)\right)=\int_{\C} \varphi d\nu.$$
\end{conj}

In the case when $d=1$ and $G=0$, this is not true  because spectra of single composition operators $T_\gamma$ can be explicitely computed and involve powers of the derivative at the unique fixed point of $\gamma$ in $\Omega_0$. The spectrum of $\lt_N$
is therefore the spectrum of the tensor product $T_\gamma\otimes U$ where $U$ is a random permutation matrix, and the limit distribution is not absolutely continous. We will assume that $d\geq 2$ in the whole paper.

Should this conjecture be proved for $d\geq 2$, it could be considered as an infinite dimensional variant of the "single ring theorem", see \cite{GKZ1,BD1}. To understand why this is a hard problem (due to the non-self adjointness), one has to be aware that for the finite dimensional problem of sums of $d$ i.i.d random permutation matrices, the limit law is currently still not known when $d$ is fixed, see in \cite{BCZ1} for more insights. The problem is likely to be slightly easier if $U_1,\ldots,U_d$ are chosen as independent, Haar distributed, unitary matrices, see \cite{BD1}, but the techniques used in \cite{BD1} should be replaced by a softer approach to allow an extension to infinite dimensional operators.

A fist step toward the analysis of non-self adjoint spectra is to study {\it singular values} instead. We recall that if $T:\mathcal{H}\rightarrow \mathcal{H}$ is a compact operator acting on a separable Hilbert space $\mathcal{H}$, the {\it singular values sequence}
$$\mu_0(T)=\Vert T \Vert\geq \mu_1(T)\geq \ldots \geq \mu_j(T)\geq \ldots > 0$$
is by definition the eigenvalue sequence of the positive compact self-adjoint operator $\sqrt{T^* T}$. We prove the following fact.

\begin{thm}
\label{WS} 
Set for all $r>0$, $$\mathcal{N}(r):=\# \left \{ j\in \N\ :\ \mu_j(\lt_N)\geq \frac{1}{r}\right \}.$$
Using the above notations, there exist  constants $C_2>0$ and $r_0>0$ with the following property. For all $k\in \N$, one can find $A_k>0$ such that as $N\rightarrow +\infty$, we have
$$ \Prob_N \left (C_2^{-1}N\leq  \mathcal{N}(r_0)\leq C_2N \right)\geq 1-\frac{A_k}{N^k}.$$
\end{thm}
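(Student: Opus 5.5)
The plan is to squeeze $\mathcal{N}(r_0)$ between a deterministic trace-norm bound and a Hilbert--Schmidt lower bound that holds with high probability. Write $T_j f(z):=e^{G(\gamma_j z)}f(\gamma_j z)$ for the scalar weighted composition operator on $H^2(\Omega_0)$. Then $\lt_N=\sum_{j=1}^d U_j\otimes T_j$ on $\HN\simeq \C^N\otimes H^2(\Omega_0)$. Each $T_j$ is trace class (as recalled from \cite{BJ1}), and it is nonzero since $e^{G}$ never vanishes.

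\textbf{Upper bound (deterministic).} Because $U_j$ is unitary, the singular values of $U_j\otimes T_j$ are those of $T_j$, each repeated $N$ times. Hence
$$\Vert \lt_N\Vert_1\leq N\sum_j \Vert T_j\Vert_1=:C_1N,$$
which is also the estimate behind Theorem \ref{Eigest}. It gives $\mathcal{N}(r)\, r^{-1}\leq \sum_{\mu_k\geq 1/r}\mu_k\leq C_1N$, that is, $\mathcal{N}(r)\leq C_1 rN$ for every $r$. In the same way, $\Vert\lt_N\Vert\leq \sum_j\Vert T_j\Vert=:M$, uniformly in $N$.

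\textbf{Lower bound via the Hilbert--Schmidt norm.} We compute
$$\Vert \lt_N\Vert_{HS}^2=\mathrm{Tr}(\lt_N^*\lt_N)=\sum_{i,j}\mathrm{Tr}(U_i^*U_j)\,\mathrm{Tr}(T_i^*T_j).$$
The diagonal terms contribute $N\sum_j\Vert T_j\Vert_{HS}^2=:c_0N$ with $c_0>0$. For $i\neq j$, the matrix $U_i^{-1}U_j$ is a uniform random permutation, so $\mathrm{Tr}(U_i^*U_j)$ is its number of fixed points $F_{ij}$. The off-diagonal part is therefore bounded in modulus by $\max_{i\neq j}F_{ij}\,(\sum_j\Vert T_j\Vert_{HS})^2$. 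The standard bound $\Prob(F\geq m)\leq 1/m!$ for the fixed points of a uniform permutation, together with a union bound over the $d^2$ pairs, shows that with probability at least $1-A_kN^{-k}$ we have $F_{ij}\leq \log N$ for all $i\neq j$ (any $m=o(N)$ with $m!\geq N^{k}$ works). On this event $\Vert\lt_N\Vert_{HS}^2\geq c_0N/2$ for $N$ large.

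\textbf{Conclusion.} Split $\sum_k\mu_k^2$ according to whether $\mu_k<1/r$. The small part satisfies
$$\sum_{\mu_k<1/r}\mu_k^2\leq r^{-1}\Vert\lt_N\Vert_1\leq C_1N/r.$$
Choose $r_0$ with $C_1/r_0\leq c_0/4$. On the good event this gives
$$\tfrac{c_0}{4}N\leq\sum_{\mu_k\geq 1/r_0}\mu_k^2\leq M^2\,\mathcal{N}(r_0),$$
so $\mathcal{N}(r_0)\geq c_0N/(4M^2)$. Combined with $\mathcal{N}(r_0)\leq C_1r_0N$, the statement holds with $C_2=\max(C_1r_0,\,4M^2/c_0)$.

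The only probabilistic input is the tail estimate for fixed points of random permutations. The main point to check is that this tail is indeed superpolynomially small, which is what produces the $N^{-k}$ rate for every $k$.
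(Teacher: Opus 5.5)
Your argument is correct, and it reaches the statement by a more elementary route than the paper. The paper deduces Theorem~\ref{WS} from Proposition~\ref{Moments}. There it shows that all centered moments $\Exp_N(|X_N|^{2k})$ of $X_N=\Vert\lt_N\Vert_2^2-\Exp_N(\Vert\lt_N\Vert_2^2)$ converge. This rests on Nica's results (Proposition~\ref{decorrelation1}) that the fixed-point counts $F_N(a_i^{-1}a_j)$ of the distinct primitive words are asymptotically independent Poisson$(1)$ variables. The paper then applies Markov's inequality at scale $t=\sqrt N$, and handles the small singular values by a summation by parts against $\mathcal{N}(r)\leq C_3rN$.

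You instead observe that each $\sigma_i^{-1}\sigma_j$ with $i\neq j$ is \emph{exactly} uniform. The bound $\Prob(F\geq m)\leq \Exp\binom{F}{m}=1/m!$ and a union bound over pairs then suffice, because the off-diagonal part only needs to be $o(N)$ against the deterministic diagonal term $c_0N$ (which is the paper's $L_1N$), not $O(\sqrt N)$. Your estimate $\sum_{\mu_k<1/r}\mu_k^2\leq r^{-1}\Vert\lt_N\Vert_1$ also replaces the summation by parts more cleanly.

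Your route needs no free-probability input and gives a much stronger probability bound than $A_kN^{-k}$. It is superpolynomial with $m\sim\log N$, and even of order $e^{-cN\log N}$ if you take $m=\lceil\epsilon N\rceil$ with $\epsilon$ small. The paper's heavier route yields more information: the fluctuations of $\Vert\lt_N\Vert_2^2$ are $O(1)$ with an explicit Poisson limiting law. That is the content of Proposition~\ref{Moments} and is of independent interest, but it is not needed for the Weyl-law statement itself.
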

In other words, with rapid convergence as $N\rightarrow +\infty$, we have an a.a.s Weyl law for the singular values of $\lt_N$. 
The constant $r_0$ can be made effective, for example if $\Omega_0$ is a disc. The proof of Theorem \ref{WS} actually follows readily from the following moments result, which is of independent interest.

\begin{propo}
\label{Moments} Set $\Vert \lt_N\Vert_2^2=\sum_{j=0}^\infty \mu_j^2(\lt_N)$.
\begin{enumerate}
 \item There exists an explicit $L_1=L_1(G,\gamma_1,\ldots,\gamma_d)>0$ such that we have
 $$\lim_{N\rightarrow +\infty} \frac{\Exp_N(\Vert \lt_N\Vert_2^2)}{N}=L_1. $$
 \item Consider the moments 
 $$\mathbb{V}_N^{(k)}:=\Exp_N\left ( \left \vert \Vert \lt_N\Vert_2^2 - \Exp_N(\Vert \lt_N\Vert_2^2)\right \vert^{2k}\right),$$
 then for all $k\geq 1$, $L_{2k}:=\lim_{N\rightarrow \infty} \mathbb{V}_N^{(k)}$ exists and is finite.
 \item Moreover, set for all $i,j\in \{1,\ldots,d\}$, 
 $$H_{i,j}:=\int_{\Omega_0} e^{G\circ \gamma_{i}(z)}e^{\overline{G\circ \gamma_{j}(z)}}B_{\Omega_0}(\gamma_{i}z,\gamma_{j}z)dm(z),$$
 where $B_{\Omega_0}(z,w)$ denotes the Bergman kernel of the space $H^2(\Omega_0,\C)$.
 Consider the random variable
 $$X_N:= \Vert \lt_N\Vert_2^2 - \Exp_N(\Vert \lt_N\Vert_2^2).$$
 Then as $N\rightarrow +\infty$, we have convergence in distribution
 $$X_N\overset{\mathrm{dist}}{\longrightarrow} 2\sum_{1\leq i<j\leq d} (Z_{i,j}-1)\Re(H_{i,j}),$$
 where for all $i<j$, $Z_{i,j}$ are independent Poisson variables with parameter $1$. 
\end{enumerate}
\end{propo}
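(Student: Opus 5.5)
First I would compute $\Vert \lt_N\Vert_2^2$ exactly as the Hilbert--Schmidt norm. The Bergman space $\HN$ has reproducing kernel $B_{\Omega_0}(z,w)\,\mathrm{Id}_{\C^N}$. For a vector-valued operator of the form $F\mapsto \sum_j a_j(z) U_j F(\gamma_j z)$, the Hilbert--Schmidt norm is $\int_{\Omega_0} \mathrm{Tr}\big(K(z,z)\big)\,dm(z)$, where $K(z,w)=\sum_{i,j} a_i(z)\overline{a_j(w)}\,U_iU_j^{*}B_{\Omega_0}(\gamma_i z,\gamma_j w)$ is the kernel of $\lt_N\lt_N^*$ evaluated via reproducing kernels. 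Equivalently, one sums $\Vert \lt_N(B_w e_k)\Vert^2$ over an orthonormal basis. This yields the identity
$$\Vert \lt_N\Vert_2^2=\sum_{i,j=1}^d \mathrm{Tr}(U_iU_j^*)\,H_{i,j} = N\sum_{i=1}^d H_{i,i}+2\sum_{i<j}\Re\big(H_{i,j}\,\mathrm{Tr}(U_iU_j^{*})\big).$$
I would use $\mathrm{Tr}(U_jU_i^*)=\overline{\mathrm{Tr}(U_iU_j^*)}$, the fact that $H_{j,i}=\overline{H_{i,j}}$, and the fact that traces of permutation matrices are real integers. The result is
$$\Vert \lt_N\Vert_2^2 = N\sum_i H_{i,i}+2\sum_{i<j}\Re(H_{i,j})\,\mathrm{Fix}(\sigma_i\sigma_j^{-1}),$$
where $\mathrm{Fix}$ denotes the number of fixed points. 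Finiteness of the $H_{i,j}$ follows from $\overline{\gamma_j(\Omega_0)}\subset\Omega_0$, since $B_{\Omega_0}$ is bounded on compact subsets of $\Omega_0\times\Omega_0$.

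The probabilistic input is then purely combinatorial. For $i<j$, the permutation $\sigma_i\sigma_j^{-1}$ is uniform on $\mathcal{S}_N$, so $\Exp_N(\mathrm{Fix})=1$ for $N\ge1$. This gives $\Exp_N\Vert\lt_N\Vert_2^2=N\sum_iH_{i,i}+2\sum_{i<j}\Re H_{i,j}$, and hence (1) with $L_1=\sum_i H_{i,i}=\sum_i\int e^{2\Re G\circ\gamma_i}B(\gamma_iz,\gamma_iz)\,dm>0$. In particular
$$X_N=2\sum_{i<j}\Re(H_{i,j})\big(\mathrm{Fix}(\sigma_i\sigma_j^{-1})-1\big).$$
For (3) I need the joint law of the family $W_{ij}:=\mathrm{Fix}(\sigma_i\sigma_j^{-1})$, $i<j$. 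These are pairwise independent but not jointly independent: for instance $\sigma_1\sigma_2^{-1}\cdot\sigma_2\sigma_3^{-1}=\sigma_1\sigma_3^{-1}$. I would use the method of moments for joint factorial moments. The quantity $\Exp\prod_{i<j}(W_{ij})_{m_{ij}}$ counts tuples of distinct points $x$ with $\sigma_i(x)=\sigma_j(x)$. Each such tuple defines a constraint system on $\sigma_1,\dots,\sigma_d$: a graph on the letters $\{1,\dots,d\}$ labeled by points. Its probability is $N^{-(\text{number of independent constraints})}(1+O(1/N))$, and the number of tuples is about $N^{\#\text{points}}$. The constraints are fully independent, giving the contribution $1$, exactly when all prescribed points are distinct across pairs. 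Coincident points, where one point is fixed by several $\sigma_i\sigma_j^{-1}$, lose a factor of $N$: a point shared by pairs $\{i,j\},\{j,k\}$ imposes only rank-many constraints, but the number of such tuples drops by $N$ while the probability gains at most the same factor. A careful rank count shows these contribute $O(1/N)$. So the joint factorial moments converge to $1$, which gives convergence to independent Poisson(1) variables. By continuous mapping, this yields (3).

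For (2), the same factorial moment estimates give uniform boundedness and convergence of all moments of $W_{ij}$. The moment $\mathbb{V}_N^{(k)}$ is a polynomial in these joint moments, so it converges to $\Exp\big|2\sum_{i<j}(Z_{ij}-1)\Re H_{ij}\big|^{2k}$, which is finite. I expect the main obstacle to be the bookkeeping in the joint factorial moment computation. The task is to show rigorously that any coincidence pattern among the chosen fixed points contributes $O(N^{-1})$. I would handle it by expressing each pattern as a system of equations $\sigma_i(x)=y=\sigma_j(x)$ and counting the free parameters, as in the standard proofs of Poisson limits for cycle counts of words in random permutations.
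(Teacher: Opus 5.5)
Your argument is correct. Up to the exact identity $X_N=2\sum_{i<j}\Re(H_{i,j})\,(W_{ij}-1)$, with $W_{ij}=\mathrm{Fix}(\sigma_i\sigma_j^{-1})$ (the paper's $F_N(a_i^{-1}a_j)$), it follows the paper: same trace formula, same $L_1=\sum_i H_{i,i}$. The routes differ only in the probabilistic input.

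The paper computes nothing by hand. It quotes Nica's results (Proposition~\ref{decorrelation1}): moments of $F_N(\gamma)$ for a primitive word converge to the Bell numbers, and joint moments of distinct primitive words, none inverse to another, factor up to $O(1/N)$. It then expands $\Exp_N(Y_N^k)$ multinomially and concludes by the one-dimensional method of moments, using exponential moments of Poisson combinations for determinacy.

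You instead use that the words $a_i^{-1}a_j$ have length two, so fixed points are just solutions of $\sigma_i(x)=\sigma_j(x)$, and you compute the joint factorial moments directly. This buys three things:
\begin{enumerate}
\item the proof is self-contained;
\item it gives the exact value $\Exp_N(W_{ij})=1$, where the paper only uses $1+O(1/N)$;
\item it shows the whole vector $(W_{ij})_{i<j}$ converges jointly to independent Poisson$(1)$ variables, with $O(1/N)$ errors on every joint moment.
\end{enumerate}
The paper's route is shorter given the literature, and it would extend to longer words, e.g.\ traces of higher powers of $\lt_N^*\lt_N$, where a hand count becomes unwieldy.

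One sentence of your bookkeeping should be corrected. If the tuple count dropped by $N$ while the probability gained the same factor, coincidences would contribute $O(1)$, not $O(1/N)$. The correct accounting is as follows.
\begin{itemize}
\item A point $x$ shared by a set $E$ of pairs contributes $N^{1-\rho(E)}$. Here $\rho(E)$ is the number of vertices minus the number of connected components of $E$, viewed as a graph on $\{1,\ldots,d\}$.
\item This holds because, for each letter $k$, the values of $\sigma_k$ at distinct points form a uniform random injection, independently across $k$.
\item The pairs in $E$ are distinct, since a factorial moment forces distinct points within a pair.
\item A simple graph with at least two edges has $\rho\geq 2$, so every coincidence costs at least $N^{-1}$.
\end{itemize}
For example, for the triangle $\{1,2\},\{2,3\},\{1,3\}$ the tuple count drops by $N^2$ while the probability gains only $N$.

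Finally, passing from joint factorial moments to joint convergence in law uses moment-determinacy of the product Poisson law. Alternatively, you can apply the univariate method of moments to $X_N$ directly, as the paper does.
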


The fact that all the moments have a limit is by itself non-trivial.
Let us show how to derive Theorem \ref{WS} from Proposition \ref{Moments}. First observe that the upper bound is actually valid deterministically, see Proposition \ref{tracest1}, so we only need to worry about the lower bound. Let $t>0$ to be adjusted later on. By Markov inequality we have for all $N$ large and a constant $A_k>0$, 
$$\Prob_N\left (  \left \vert \Vert \lt_N\Vert_2^2 - \Exp_N(\Vert \lt_N\Vert_2^2)\right \vert \geq t \right)$$ 
$$=\Prob_N\left (  \left \vert \Vert \lt_N\Vert_2^2 - \Exp_N(\Vert \lt_N\Vert_2^2)\right \vert^{2k} \geq t^{2k} \right)\leq \frac{1}{t^{2k}} \Exp_N\left ( \left \vert \Vert \lt_N\Vert_2^2 - \Exp_N(\Vert \lt_N\Vert_2^2)\right \vert^{2k}\right)$$
$$\leq  \frac{A_k}{t^{2k}}.$$
Taking $t=\sqrt{N}$ and dividing by $N$ we get
$$\Prob_N\left (  \frac{\left \vert \Vert \lt_N\Vert_2^2 - \Exp_N(\Vert \lt_N\Vert_2^2)\right \vert}{N} \geq \frac{1}{\sqrt{N}} \right) 
\leq \frac{A_k}{N^{k}}.$$
Therefore using $(1)$ we get for all $N$ large
$$\Prob_N\left ( \frac{\Vert \lt_N\Vert_2^2}{N}\geq \frac{L}{2}- \frac{1}{\sqrt{N}}\right)\geq 1-  \frac{A_k}{N^{k}},$$
which for all $N$ large enough gives
$$\frac{\Vert \lt_N\Vert_2^2}{N}\geq \frac{L}{3},$$
with probability greater than $1-  \frac{A_k}{N^{k}} $. Let us consider the counting function
$$\mathcal{N}(r):=\# \left \{ j\in \N\ :\ \mu_j(\lt_N)\geq \frac{1}{r}\right \},$$
we can write
$$\Vert \lt_N\Vert_2^2=\sum_{j=0}^\infty \mu_j(\lt_h)^2=\sum_{\mu_j > 1/r} \mu_j^2+\sum_{\mu_j \leq 1/r} \mu_j^2,$$
which can be bounded as
$$\Vert \lt_N\Vert_2^2\leq M_0^2\mathcal{N}(r)+\int_r^\infty \frac{d\mathcal{N}(\lambda)}{\lambda^2},$$
by using the fact that $\Vert \lt_N \Vert_{\mathcal{H}_N}\leq M_0$ uniformly in $N$, see below. Using the deterministic upper bound $\mathcal{N}(r)\leq C_3rN$ from section 2, (\ref{singest1}), we get by summation by parts
$$\Vert \lt_N\Vert_2^2\leq M_0^2\mathcal{N}(r)+3\int_r^\infty \frac{\mathcal{N}(\lambda)d\lambda}{\lambda^3}\leq M_0^2\mathcal{N}(r)
+3C_3\frac{N}{r}.$$
Therefore with probability greater than $1-  \frac{A_k}{N^{k}} $, we have
$$\mathcal{N}(r)\geq M_0^{-2}\left ( \frac{L}{3}-\frac{3C_3}{r}\right)N,$$
and we are done as long as $r>9C_3/L$.

Although Theorem \ref{WS} gives the exact growth rate of the counting function $\mathcal{N}(r)$ as $N\rightarrow +\infty$, it does not provide a precise asymptotic description of the location of singular values. We will prove a complementary result, based on a different set of techniques, which we state below. Let $\mathbb{F}^d$ be the {\it abstract free group on $d$ generators}
 $$\mathbb{F}^d:=\langle a_1,a_2,\ldots,a_d;a_1^{-1},\ldots, a_d^{-1}\rangle.$$
 The left regular representation $\lambda:\mathbb{F}^d\rightarrow U(\ell^2(\mathbb{F}^d))$ is defined for all $f\in \ell^2(\mathbb{F}^d)$ and all $\gamma \in \mathbb{F}^d$ by
 $$\lambda(\gamma)(f):=f\circ \gamma^{-1}.$$
 We define a bounded operator $\mathscr{M}_\infty$, acting on the tensor product $H^2(\Omega_0)\otimes \ell^2(\mathbb{F}^d)$, by the formula
 $$\mathscr{M}_\infty:=\left ( \sum_{i=1}^d e^{G\circ \gamma_i}T_{\gamma_i}\otimes \lambda(a_i)\right)^*\left ( \sum_{j=1}^d e^{G\circ \gamma_j}T_{\gamma_j}\otimes \lambda(a_j)\right),$$
 where $T_{\gamma_i}:H^2(\Omega_0)\rightarrow H^2(\Omega_0)$ is the composition operator associated to the contraction $\gamma_i$. Note that we have also the formula
 $$\mathscr{M}_\infty=\left ( \sum_{j=1}^d \vert e^{G\circ \gamma_j}\vert^2 T_{\gamma_j}^*T_{\gamma_j}\right)\otimes Id +
 \sum_{i\neq j} e^{\overline{G\circ \gamma_i}} e^{G\circ \gamma_j}T_{\gamma_i}^*T_{\gamma_j}\otimes \lambda(a_i^{-1}a_j).$$
 By definition, $\mathscr{M}_\infty$ is a positive self-adjoint operator with non-negative spectrum.
 We denote by $V_N$ the subspace of $\mathcal{H}_N=H^2(\Omega_0,\C^N)$ defined by
 $$V_N:=\left\{ F=(F_1,\ldots,F_N)\in \mathcal{H}_N\ :\ \forall\ z\in \Omega_0,\  \sum_{i=1}^NF_i(z)=0\right\}.$$
 The subspace $V_N$ is exactly the orthogonal to the subspace of functions $F\in \mathcal{H}_N$ whose coordinates are all the same i.e. $F(z)=f(z)\otimes (1,\ldots,1)$. Note that both $V_N$ and
 $V_N^\perp$ are invariant by $\lt_N$ since permutations matrices leave constant vectors invariants. 
 
 \begin{thm}
 \label{poly1} There exists a finite positive Radon measure $\mu_\infty$, whose support satisfies
 $$\mathrm{Sp}(\mathscr{M}_\infty)\setminus \{0\}\subset \mathrm{supp}(\mu_\infty)\subset  \mathrm{Sp}(\mathscr{M}_\infty),$$ 
 such that we have for all $C^\infty_0(\R)$ function $\varphi$ with $\varphi(0)=0$,
 $$\lim_{N\rightarrow +\infty} \frac{\Exp_N \left(\mathrm{Tr}(\varphi(\lt_N^*\lt_N\vert_{V_N}))\right)}{N}=\int_{\R} \frac{\varphi(x)}{x} d\mu_{\infty}(x).$$
 \end{thm}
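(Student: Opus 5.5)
Our plan is to prove Theorem \ref{poly1} with the \emph{polynomial method} (in the spirit of Chen--Garza-Vargas--Tropp--van Handel and Magee--Puder--van Handel), adapted to trace-class operators on $H^2(\Omega_0)$. Write $\lt_N\vert_{V_N}=\sum_j e^{G\circ\gamma_j}T_{\gamma_j}\otimes U_j\vert_{V_N^0}$, where $V_N^0=\{v\in\C^N:\sum v_i=0\}$ is the standard representation of $\mathcal{S}_N$. Then
$$\lt_N^*\lt_N\vert_{V_N}=\sum_{j}\vert e^{G\circ \gamma_j}\vert^2T_{\gamma_j}^*T_{\gamma_j}\otimes I+\sum_{i\neq j}e^{\overline{G\circ\gamma_i}}e^{G\circ\gamma_j}T_{\gamma_i}^*T_{\gamma_j}\otimes U_i^{-1}U_j\vert_{V_N^0}.$$
Set $A_i:=e^{G\circ\gamma_i}T_{\gamma_i}$; each $A_i$ is trace class. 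For a polynomial $P(x)=\sum_{k\ge1}c_kx^k$ with $P(0)=0$, expanding $(\lt_N^*\lt_N)^k$ gives a finite sum of terms $\mathrm{Tr}(A_{i_1}^*A_{j_1}\cdots A_{i_k}^*A_{j_k})\cdot\mathrm{tr}_{V_N^0}(w(U))$, where $w=a_{i_1}^{-1}a_{j_1}\cdots a_{i_k}^{-1}a_{j_k}\in\mathbb{F}^d$. Each operator trace is finite because every term contains at least one trace-class factor.

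The first step uses the classical fact (Nica, Linial--Puder) that for a fixed word $w$, $\Exp_N\mathrm{tr}_{V_N^0}(w(U))=\mathrm{fix}_w(N)-1$ is a rational function of $N$, hence a rational function of $1/N$ after dividing by $N$. Its value at $1/N=0$ is $\delta_{w=e}$. Consequently $\Phi_N(P):=\frac1N\Exp_N\mathrm{Tr}(P(\lt_N^*\lt_N\vert_{V_N}))$ equals $\Phi_\infty(P)+O_P(1/N)$, where
$$\Phi_\infty(P)=\sum_k c_k\,(\mathrm{Tr}\otimes\tau)(\mathscr{M}_\infty^k)$$
and $\tau$ is the trace $\langle\cdot\,\delta_e,\delta_e\rangle$ on the free group von Neumann algebra. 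Since $\mathscr{M}_\infty^k$ lies in $(\text{trace class})\otimes$ the group von Neumann algebra for $k\geq 1$, the map $\Phi_\infty$ is well defined.

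The second step builds $\mu_\infty$. We define the functional $\varphi\mapsto(\mathrm{Tr}\otimes\tau)(\varphi(\mathscr{M}_\infty))$ for $\varphi$ vanishing at $0$. Because $\mathrm{Tr}\otimes\tau$ is a semifinite normal trace and $\mathscr{M}_\infty$ is a trace-class-type element, we can write $(\mathrm{Tr}\otimes\tau)(\varphi(\mathscr M_\infty))=\int \frac{\varphi(x)}{x}d\mu_\infty(x)$ with $d\mu_\infty(x)=x\,d\nu(x)$. Here $\nu$ is the spectral measure of $\mathscr{M}_\infty$ with respect to $\mathrm{Tr}\otimes\tau$, which may be infinite near $0$. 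The measure $\mu_\infty$ is finite, of total mass $(\mathrm{Tr}\otimes\tau)(\mathscr M_\infty)=\sum_i\Vert A_i\Vert_2^2$. Its support lies in $\mathrm{Sp}(\mathscr M_\infty)$, and it contains $\mathrm{Sp}(\mathscr M_\infty)\setminus\{0\}$ because $\tau$ is faithful.

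The third step, which we expect to be the main obstacle, passes from polynomials to smooth $\varphi$. This requires controlling the $O(1/N)$ error uniformly in the degree $q$ of $P$. The finite-dimensional strategy is to show $|\Phi_N(P)-\Phi_\infty(P)|\le C q^{c}\Vert P\Vert_{\infty,[0,M_0^2]}/N$ via a Markov brothers-type inequality, using that $\Phi_N(P)$ is a rational function in $1/N$ with controlled degree and denominator. The infinite-dimensional difficulty is that $\Phi_N(P)$ is not bounded by $\Vert P\Vert_\infty$ times a constant. We would instead exploit $P(0)=0$ and the trace-norm bound from Proposition \ref{tracest1} to get
$$|\Phi_N(P)|\le \Big\Vert \frac{P(x)}{x}\Big\Vert_{\infty,[0,M_0^2]}\,\frac1N\Vert\lt_N\Vert_2^2\le C\Big\Vert \frac{P(x)}{x}\Big\Vert_\infty.$$
This bound is uniform in $N$ and holds for every $N$, including along the rational interpolation in $1/N$. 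We would then run the Markov-inequality argument with $\Vert P/x\Vert_\infty$ in place of $\Vert P\Vert_\infty$, obtaining an error of order $\mathrm{poly}(q)\Vert P/x\Vert_\infty/N$.

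Finally, write $\varphi(x)=x\psi(x)$ with $\psi$ smooth on $[0,M_0^2]$, and approximate $\psi$ by Chebyshev expansions. Rapid decay of the Chebyshev coefficients of $\psi$ beats the polynomial loss $q^{c}$, so summing the errors gives $\Phi_N(\varphi)\to\int\varphi(x)x^{-1}d\mu_\infty$. The spectrum stays in $[0,M_0^2]$ for both $N$ and $\infty$, so only that interval matters.
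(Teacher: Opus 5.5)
Your proposal is correct, but it handles the passage to smooth test functions by a genuinely different route from the paper.

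The paper never reopens the polynomial method. Its proof of Theorem \ref{main1} runs as follows:
\begin{enumerate}
\item It approximates each coefficient $A_\gamma$ by an operator of rank at most $m$, with trace-norm error $O(e^{-am})$. This is Proposition \ref{Approxtr}, and it is where the exponential-class property of composition operators is used.
\item It controls the two resulting approximation errors with Lemma \ref{tracenormbound} and the Helffer--Sj\"ostrand operator-Lipschitz bound of Lemma \ref{Lipschitzbound}.
\item It compresses the finite-rank element to a matrix-coefficient element of size $D\le rm$ and cites Corollary 7.2 of \cite{CGTVH} as a black box. That corollary's error is linear in $D$.
\item It balances the errors with $m\sim\alpha\log N$, which gives the rate $O(\log N/N)$.
\end{enumerate}
The measure is then produced by the Riesz theorem from the bound of Proposition \ref{tracial}, and faithfulness is proved by hand using right translations.

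You instead run the \cite{CGTVH} machinery directly in infinite dimension. The key observation is that the operator coefficients enter $\frac1N\Exp_N\mathrm{Tr}\,P(\lt_N^*\lt_N\vert_{V_N})$ only through the scalars $\mathrm{Tr}(A_{i_1}^*A_{j_1}\cdots A_{i_k}^*A_{j_k})$. Hence the rational dependence on $1/N$, with degree polynomial in $\deg P$, is inherited verbatim from $\Exp_N\mathrm{fix}_w$. You then replace the finite-dimensional a priori bound $D\Vert P\Vert_\infty$ by $C\Vert P/x\Vert_\infty$, using $P(0)=0$ and the deterministic bound $\frac1N\Vert\lt_N\Vert_2^2\le C$.

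What your route buys:
\begin{itemize}
\item a rate $O(1/N)$ instead of $O(\log N/N)$;
\item no finite-rank compression and no operator-Lipschitz estimate;
\item no exponential-class hypothesis, since Hilbert--Schmidt coefficients already suffice.
\end{itemize}
This also makes effective the paper's closing remark about merely trace-class coefficients.

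The price is that you must re-verify, rather than cite, the grid-to-interval lemma and the Markov step. One point in your write-up is imprecise: the a priori bound holds only at the points $t=1/N$, not along the interpolating rational function. Boundedness on $[0,1/N_0]$, with $N_0$ polynomial in the degree and the denominator bounded away from $0$ there, has to be derived from that lemma exactly as in \cite{CGTVH}. Everything else carries over unchanged.

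Your construction of $\mu_\infty=x\,d\nu$, via the spectral measure for the faithful semifinite trace $\mathrm{Tr}\otimes\tau$, is a cleaner packaging of Proposition \ref{tracial} and its support argument.
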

 Note that since $\lt_N^*\lt_N$ is self-adjoint, $\varphi(\lt_N^*\lt_N)$ is well defined for all smooth function, the hypothesis $\varphi(0)=0$ is here to ensure that $\varphi(\lt_N^*\lt_N)$ remains trace class. In the above statement, it is important to
 observe that $\lt_N^*\lt_N$ is restricted to $V_N$, to avoid the obvious spectra coming from constant invariant vectors.
 Let us comment more on the result: the spectrum of the limit operator $\mathscr{M}_\infty$ and the measure $\mu_\infty$ describe the asymptotic distribution of singular values of $\lt_N$ in the large $N$ limit, and the above result shows that a smooth Weyl asymptotic for singular values holds on average. A straightforward \footnote{See the last section for some details.}
 corollary is the following.
 
 \begin{cor}
 Let $I=[a,b]$, with $0<a<b$, be an interval such that $$I\cap\mathrm{Sp}(\mathscr{M}_\infty)\not =\emptyset.$$ Then there exist $C_0, C_1>0$ such that as $N\rightarrow +\infty$,
 $$C_0 N\leq \Exp_N( \mathcal{N}_{I})\leq C_1 N,$$
 where $\mathcal{N}_{I}:=\#\{ j\in \N\ :\ \mu_j^2(\lt_N)\in I \}$. Moreover if $\mu_\infty(\{a\})=\mu_\infty(\{b\})=0$, then we have as $N\rightarrow +\infty$
 $$\lim_{N}\frac{\Exp_N( \mathcal{N}_{I})}{ N}=\int_I \frac{d\mu_{\infty}(x)}{x}. $$
  
 \end{cor}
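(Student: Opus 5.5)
The corollary follows from Theorem \ref{poly1} by sandwiching the indicator of $I$ between smooth test functions vanishing at $0$. The two parts need different inputs. The upper bound can use the deterministic estimate of section 2. The lower bound and the limit need the weak convergence of the measures
$$\nu_N(\varphi):=\frac{1}{N}\Exp_N\mathrm{Tr}\,\varphi(\lt_N^*\lt_N\vert_{V_N})$$
to $\varphi(x)x^{-1}d\mu_\infty$ on $(0,\infty)$.

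First I pass from $\mathcal{N}_I$ to the restriction to $V_N$. Since $V_N$ and $V_N^\perp$ are invariant by $\lt_N$, and hence by $\lt_N^*$, the operator $\lt_N^*\lt_N$ splits orthogonally. On $V_N^\perp\simeq H^2(\Omega_0)$ it acts as $\mathcal{L}_0^*\mathcal{L}_0$, where $\mathcal{L}_0=\sum_j e^{G\circ\gamma_j}T_{\gamma_j}$ does not depend on $N$. That operator is trace class, so it has only finitely many singular values squared in $I$, a number $K_I$ independent of $N$. Hence $\mathcal{N}_I=\mathcal{N}_I^{V_N}+O(1)$ deterministically, and this error disappears after dividing by $N$.

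For the upper bound, $\mathcal{N}_I\le\mathcal{N}(a^{-1/2})\le C_3a^{-1/2}N$ by the deterministic singular value estimate (\ref{singest1}). For the lower bound, pick $x_0\in I\cap \mathrm{Sp}(\mathscr{M}_\infty)$; note $x_0>0$. Two cases arise.
\begin{itemize}
\item If $x_0$ is interior to $I$, choose $\varphi\in C_0^\infty((a,b))$ with $0\le\varphi\le 1$ and $\varphi>0$ near $x_0$. Since $x_0\in\mathrm{supp}(\mu_\infty)$ by Theorem \ref{poly1}, $\int\varphi x^{-1}d\mu_\infty=:c>0$. Then $\Exp_N\mathcal{N}_I\ge \Exp_N\mathrm{Tr}\,\varphi(\lt_N^*\lt_N\vert_{V_N})\ge cN/2$ for $N$ large.
\item If $x_0$ is an endpoint, say $x_0=a$, I need mass of $\mu_\infty$ inside $[a,b]$ near $a$. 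Supports are closed, so any neighbourhood of $a$ has positive $\mu_\infty$-mass, but that mass could lie only to the left of $a$.
\end{itemize}
This endpoint case is the main obstacle. I would first try to argue that the statement intends $I$ to meet the spectrum in a way that gives mass in $I$, for instance in its interior. Failing that, I would handle it by a limiting argument: use $\varphi$ supported in $[a,b]$ with $\varphi(a)=1$ plus a uniform control. Honestly, the natural reading is $\mathrm{int}(I)\cap\mathrm{Sp}\ne\emptyset$, and I would use that reading.

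For the limit, assume $\mu_\infty(\{a\})=\mu_\infty(\{b\})=0$. Take $\varphi_\epsilon^-\le 1_I\le\varphi_\epsilon^+$ in $C_0^\infty((0,\infty))$, with $\varphi_\epsilon^\pm$ differing from $1_I$ only on $\epsilon$-neighbourhoods of $a$ and $b$, where $\epsilon<a$. Then
$$\nu_N(\varphi^-_\epsilon)\le N^{-1}\Exp_N\mathcal{N}_I^{V_N}\le\nu_N(\varphi^+_\epsilon).$$
Letting $N\to\infty$ with Theorem \ref{poly1}, then $\epsilon\to0$ with dominated convergence, both bounds tend to $\int_I x^{-1}d\mu_\infty$. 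Here $x^{-1}$ is bounded on $[a-\epsilon,b+\epsilon]$, and the endpoints carry no mass. The limit follows after adding the $O(1)/N$ error from $V_N^\perp$.
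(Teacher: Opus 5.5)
Your argument is essentially the paper's. The paper points to the proof of the corollary at the end of the last section, which sandwiches $\chi_I$ between smooth functions $0\le\varphi^-\le\chi_I\le\varphi^+$ supported away from $0$, applies the trace asymptotics of Theorem \ref{poly1}, and then shrinks the sandwich. You deviate in two places, and both are improvements. Your upper bound via (\ref{singest1}) is deterministic and needs no spectral input. Your splitting $\lt_N^*\lt_N=(\lt_N^*\lt_N\vert_{V_N})\oplus\mathcal{L}_0^*\mathcal{L}_0$, with $\mathcal{L}_0$ independent of $N$ and of the permutations, correctly connects Theorem \ref{poly1}, which only sees $V_N$, to $\mathcal{N}_I$, which counts all singular values; the paper leaves this step implicit.

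On the endpoint case, stop looking for a fix: the first assertion is false as literally stated, and your own limit argument shows it. The upper half of your sandwich gives, with no assumption on the endpoints, $\limsup_N N^{-1}\Exp_N(\mathcal{N}_I)\le\int_{[a,b]}x^{-1}\,d\mu_\infty(x)$. Suppose $I\cap\mathrm{Sp}(\mathscr{M}_\infty)=\{a\}$ and $\mu_\infty(\{a\})=0$; for instance $a=\Vert\mathscr{M}_\infty\Vert<b$, provided $\Vert\mathscr{M}_\infty\Vert$ is not an eigenvalue of $\mathscr{M}_\infty$. Then the inclusion $\mathrm{supp}(\mu_\infty)\subset\mathrm{Sp}(\mathscr{M}_\infty)$ forces the right-hand side to vanish, so no $C_0>0$ can exist.

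Your fallback could not have worked anyway. Any continuous $\varphi$ with $\varphi\le\chi_I$, in particular one supported in $[a,b]$, satisfies $\varphi(a)\le 0$. So no smooth minorant ever detects mass at or to the left of $a$, not even an atom.

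The correct hypothesis is $(a,b)\cap\mathrm{Sp}(\mathscr{M}_\infty)\neq\emptyset$. By the support inclusions in Theorem \ref{poly1}, this is equivalent to $\mu_\infty((a,b))>0$. Under this hypothesis your interior-case argument is a complete proof. The paper's sandwich argument tacitly needs the same condition to get $\int\varphi^-x^{-1}\,d\mu_\infty>0$.
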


 It would be very interesting to know if this result can be strenghtened to an a.a.s statement like the counting result of Theorem \ref{WS}, but valid for all smooth test functions $\varphi$ as above. We don't know in general how to compute the spectrum of $\mathscr{M}_\infty$ and if the measure $\mu_\infty$ is absolutely continuous with respect to Lebesgue. We expect that the spectrum of $\mathscr{M}_\infty$ is always (for $d\geq 2$) the full interval $[0,\Vert \mathscr{M}_\infty \Vert]$ and may have some embedded eigenvalues and singular continuous spectrum. 
 We will produce some examples where it is possible to describe the spectrum of $\mathscr{M}_\infty$ in an asymptotic regime of "large contraction", see the very last section of this paper.
 
 {\bf Organization of the paper.} In $\S 2$ below, we collect some basic facts on Bergman spaces and prove some basic estimates for transfer operators, which are enough to obtain Theorem \ref{Eigest}. In $\S 3$, we compute the asymptotic variance of the Hilbert schmidt norm of $\lt_N$ via some known results by Nica and Puder-Zimhoni on the statistics of fixed points of random permutations. It implies an a.a.s. Weyl law for the singular values with a polynomial rate of convergence of the propability to $1$.
 To go further, we compute all the moments of the fluctuations of the Hilbert-Schmidt norm in $\S 4$, based on the combinatorial work of Nica. The classical moments method allow then to deduce Proposition \ref{Moments}. In $\S 5$, we extend the polynomial method of \cite{CGTVH} (or rather one of its consequence) to an abstract framework which allows us to deal with elements in the group ring of the free group with trace class coefficients. This requires a careful analysis of the tracial state involved and the
 extension of the functional calculus (for self-adjoint elements) from polynomials to smooth functions vanishing at $0$. The main result is Theorem \ref{main1}, which is proved by approximating coefficients by finite rank operators (in a quantitative way) and simultaneously using the convergence of expectations of traces given by \cite{CGTVH}. The fact that we work with trace class operators in exponential classes is very helpful in this process. Theorem \ref{poly1} is then just a corollary. Section $\S 6$ is devoted to an example where it is possible to prove some non trivial information about the spectrum of the limit operator $\mathscr{M}_\infty$ via a perturbative argument. 

{\bf Related questions}. In this paper, we have opted for permutation matrices rather than general Haar distributed unitary twists. 
The exact same results should be provable without major issues, by using the Weingarten calculus and techniques used in proofs of strong asymptotic freeness,
see for example in \cite{Collins1}.

\section{Trace norm estimate and deterministic upper bounds}
Most of the estimates pertaining to composition operators on $H^2(\Omega_0)$ will be achieved via Bergman kernel techniques, and we include an overview of the tool box
below. 
\subsection{On Bergman kernels}
We need to recall key facts about Bergman kernels which we summarize here and can all be derived from the elementary properties as explained in 
\cite{Krantz_Bergman}. Let $\Omega$ be an open bounded subset of $\C$, and $H^2(\Omega)$ the associated Bergman space of holomorphic functions.
The  {\it Bergman kernel} is the unique reproducing kernel $B_\Omega(z,w)$ such that for all $f\in H^2(\Omega)$ we have for all $z \in \Omega$,
$$f(z)=\int_\Omega B_\Omega(z,w)f(w)dm(w). $$
Given any Hilbert basis $(\varphi_n)_{n\in \N}$ of the (separable) Hilbert space $H^2(\Omega)$, we have (convergence is uniform on compact subsets of $\Omega$),
$$B_\Omega(z,w)=\sum_{n=0}^\infty \varphi_n(z)\overline{\varphi_n(w)}.$$
In addition, we have the following properties.
\begin{enumerate}
\item For all $z,w\in \Omega$, we have $\vert B_\Omega(z,w)\vert^2\leq \vert B_\Omega(z,z)\vert \vert B_\Omega(w,w)\vert$.
\item For all $z\in \Omega$, $B_\Omega(z,z)=\sup \{\vert f(z)\vert^2\ :\ \Vert f \Vert_{H^2(\Omega)}\leq 1\}$.
\item If $z\in \Omega_1 \subset \Omega_2$, then $B_{\Omega_2}(z,z)\leq B_{\Omega_1}(z,z)$.
\end{enumerate}
Let $x\in \C$ and $r>0$. We also recall that in the case of a disc, $\Omega=D(x,r)$, a Hilbert basis of $H^2(D(x,r))$ is given by the family of functions
$${\bf e}_\ell(z):=\sqrt{\frac{\ell+1}{\pi}}\left ( \frac{z-x}{r}\right)^\ell,$$
while the Bergman reproducing kernel of $H^2(D(x,r))$ is explicitly given by
$$B_{D(x,r)}(z,w)= \frac{1}{\pi} \frac{1}{r^2\left ( 1-\frac{(z-x)\overline{(w-x)}}{r^2}\right)^2},$$
see for example \cite{Krantz_Bergman}.
From these elementary facts we will mainly use the following practical consequence.
\begin{propo}
\label{bergest1}
Assume that $\Omega \subset B(x,r)$ with $x\in \Omega$. Set 
$$\mathrm{dist}(z,\partial \Omega)=\inf_{w\in \Omega^c}\vert z-w\vert.$$
Then we have for all $z\in \Omega$,
$$ \frac{1}{\pi r^2}\leq B_\Omega(z,z)\leq \frac{1}{\pi \mathrm{dist}(z,\partial \Omega)^2}$$
\end{propo}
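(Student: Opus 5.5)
The plan is to combine the monotonicity property (3) with the explicit Bergman kernel of a disc. The key inputs are the formula
$$B_{D(x,r)}(z,w)=\frac{1}{\pi r^2\left(1-\frac{(z-x)\overline{(w-x)}}{r^2}\right)^2}$$
and the extremal characterization (2), which is what gives (3).

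\textbf{Upper bound.} Fix $z\in\Omega$ and set $\delta=\mathrm{dist}(z,\partial\Omega)$. By definition of $\delta$, the disc $D(z,\delta)$ is contained in $\Omega$. Property (3) applied to $D(z,\delta)\subset\Omega$ gives $B_\Omega(z,z)\leq B_{D(z,\delta)}(z,z)$. Evaluating the disc kernel at its centre ($x=z$, so the numerator term vanishes) gives $B_{D(z,\delta)}(z,z)=\frac{1}{\pi\delta^2}$, which is the claimed upper bound.

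\textbf{Lower bound.} Here I will use the inclusion $\Omega\subset D(x,r)$ and property (3) in the other direction: $B_{D(x,r)}(z,z)\leq B_\Omega(z,z)$. For $z\in D(x,r)$ the disc formula gives
$$B_{D(x,r)}(z,z)=\frac{1}{\pi r^2\left(1-\frac{|z-x|^2}{r^2}\right)^2}\geq \frac{1}{\pi r^2},$$
because $0<1-|z-x|^2/r^2\leq 1$. This yields $B_\Omega(z,z)\geq \frac{1}{\pi r^2}$ for every $z\in\Omega$. The hypothesis $x\in\Omega$ is not really needed for this argument; it only ensures the configuration is nondegenerate.

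Alternatively, one can get the lower bound directly from (2): the constant function $f=(m(\Omega))^{-1/2}$ has norm $1$ in $H^2(\Omega)$, so $B_\Omega(z,z)\geq 1/m(\Omega)\geq 1/(\pi r^2)$.

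The main point to verify is property (3) itself. It follows from (2): restricting any $f$ in the unit ball of $H^2(\Omega_2)$ to $\Omega_1$ gives an element of the unit ball of $H^2(\Omega_1)$, so the supremum defining $B_{\Omega_1}(z,z)$ is at least the one defining $B_{\Omega_2}(z,z)$. With (3) in hand, no step presents a real obstacle.
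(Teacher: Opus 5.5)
Your proof is correct and is essentially the paper's argument: the upper bound comes from monotonicity (3) applied to $D(z,\mathrm{dist}(z,\partial\Omega))\subset\Omega$, and the lower bound from $\Omega\subset D(x,r)$ together with the explicit disc kernel. Your alternative lower bound via the normalized constant function $m(\Omega)^{-1/2}$ and your derivation of (3) from (2) are also valid.
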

\noindent {\it Proof}. We have $D(z, \mathrm{dist}(z,\partial \Omega))\subset \Omega$ therefore setting $\alpha=\mathrm{dist}(z,\partial \Omega)$, we have
$$B_\Omega(z,z)\leq B_{D(z,\alpha)}(z,z)=\frac{1}{\pi\alpha^2}.$$
On the other hand we have
$$B_\Omega(z,z)\geq B_{D(x,r)}(z,z)=\frac{1}{\pi r^2(1-\vert z-x\vert^2/r^2)^2}\geq   \frac{1}{\pi r^2},$$
the proof is done. $\square$

Stronger estimates can be derived by using the conformal mapping theorem.
Indeed, since $\Omega_0$ has a $C^2$-regular boundary, it is a "Dini smooth" domain and there exists a conformal mapping $\psi:\Omega_0\rightarrow \D$,
where $\D$ is the unit disc, such that $\psi$ and $\psi'$ extend continuously to $\partial \Omega_0$. See for example in the book by Pommerenke \cite{PM1}, Theorem 3.5. It allows to define an isometry $\mathscr{J}:H^2(\D)\rightarrow H^2(\Omega_0)$ via the formula:
$$\mathscr{J}(f):=\phi'(f\circ \psi).$$
As an application, the Bergman kernel of $H^2(\Omega_0)$ is then given by
$$B_{\Omega_0}(z,w)=\frac{\psi'(z)\overline{\psi'(w)}}{\pi\left(1-\psi(z)\overline{\psi(w)}\right)^2}.$$
In particular, if $K$ is any compact subset of $\Omega_0$, setting $R=\max_{z\in K}\vert \psi(z)\vert<1$, we have for all $z\in K$ and all $w\in \Omega_0$,
\begin{equation}
\label{bergest2}
\vert B_{\Omega_0}(z,w)\vert\leq \sup_{\Omega_0}\vert \psi' \vert^2 \frac{1}{\pi(1-R)^2}.
\end{equation}

\subsection{A crude operator norm bound}
In this subsection we prove the following crude bound.
\begin{propo}
There exists a constant $M_0>0$ such that for all $N$ we have
$$\Vert \lt_N \Vert_{\mathcal{H}_N}\leq M_0. $$
\end{propo}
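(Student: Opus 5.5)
The plan is to reduce the bound to scalar composition operators, then use the fact that permutation matrices are unitary. For $F\in\mathcal{H}_N$ write
$$\lt_N F=\sum_{j=1}^d U_j\,\bigl(e^{G\circ\gamma_j}F\circ\gamma_j\bigr).$$
Since each $U_j$ is a constant unitary matrix, we have $\Vert U_j H(z)\Vert=\Vert H(z)\Vert$ pointwise for any $H\in\mathcal{H}_N$. Hence $\Vert U_jH\Vert_{\mathcal{H}_N}=\Vert H\Vert_{\mathcal{H}_N}$. The triangle inequality then gives
$$\Vert \lt_N\Vert\le \sum_j \Vert \mathcal{T}_j\Vert_{\mathcal{H}_N},\qquad \mathcal{T}_j F:=e^{G\circ\gamma_j}F\circ\gamma_j.$$
It therefore suffices to bound each $\mathcal{T}_j$ on $\mathcal{H}_N$ independently of $N$.

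Next, $\mathcal{T}_j$ acts coordinatewise: $(\mathcal{T}_jF)_i=e^{G\circ\gamma_j}F_i\circ\gamma_j$. Since $\Vert F\Vert^2_{\mathcal{H}_N}=\sum_i\Vert F_i\Vert^2_{H^2(\Omega_0)}$, we get $\Vert\mathcal{T}_j\Vert_{\mathcal{H}_N}=\Vert \mathcal{T}_j\Vert_{H^2(\Omega_0)}$. This is the scalar weighted composition operator, and $N$ has disappeared.

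Finally, bound the scalar operator. Let $K_j:=\overline{\gamma_j(\Omega_0)}$, which is a compact subset of $\Omega_0$ by assumption (2). For $f\in H^2(\Omega_0)$ and $z\in\Omega_0$, property (2) of the Bergman kernel gives
$$|f(\gamma_j z)|^2\le B_{\Omega_0}(\gamma_jz,\gamma_jz)\,\Vert f\Vert^2\le \sup_{w\in K_j}B_{\Omega_0}(w,w)\,\Vert f\Vert^2.$$
The supremum is finite, either by (\ref{bergest2}) or by Proposition \ref{bergest1} with $\mathrm{dist}(K_j,\partial\Omega_0)>0$. The weight $e^{G\circ\gamma_j}$ is bounded, since $G$ is continuous on $\overline{\Omega_0}$. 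Integrating over the bounded set $\Omega_0$ yields
$$\Vert\mathcal{T}_jf\Vert^2\le m(\Omega_0)\,e^{2\sup_{\overline{\Omega_0}}\Re G}\,\sup_{K_j}B_{\Omega_0}(w,w)\,\Vert f\Vert^2.$$
We also need $\mathcal{T}_jf$ to be holomorphic, which is clear. So $M_0=\sum_j (\cdots)^{1/2}$ works.

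There is no real obstacle. The only point needing care is the uniform bound of the Bergman kernel on the diagonal over $K_j$, which uses the strict inclusion $\overline{\gamma_j(\Omega_0)}\subset\Omega_0$ rather than mere self-mapping.
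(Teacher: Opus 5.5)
Your proof is correct and follows essentially the same route as the paper: split $\lt_N$ into its $d$ terms, remove the permutation matrices by unitarity, bound $\sup_z \Vert F(\gamma_j z)\Vert$ via the Bergman kernel on the compact set $\overline{\gamma_j(\Omega_0)}$, and integrate over the bounded domain $\Omega_0$. The only difference is the kernel estimate: you use the diagonal bound $|f(w)|^2\le B_{\Omega_0}(w,w)\Vert f\Vert^2$ together with Proposition \ref{bergest1}, whereas the paper uses the conformal-map estimate (\ref{bergest2}) on the off-diagonal kernel; your version is slightly cleaner and gives the same $N$-independent constant.
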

\noindent {\it Proof}. Denote by $T_{\gamma_j}:\HN\rightarrow \HN$ the composition operator defined by $$T_{\gamma_j}(F)=F\circ \gamma_j.$$
Since we can write
$$\lt_N=\sum_{j=1}^d e^{G\circ \gamma_j}U_j\otimes T_{\gamma_j},$$
it is enough to bound each
$$\Vert e^{G\circ \gamma_j}U_j\otimes T_{\gamma_j}\Vert_{\HN}. $$
Writing by unitarity of $U_j$
$$\Vert e^{G\circ \gamma_j}U_j\otimes T_{\gamma_j}(F)\Vert_{\HN}^2=\int_{\Omega_0}\Vert e^{G\circ \gamma_j(z)}U_j F\circ \gamma_j\Vert^2dm(z)$$
$$\leq \sup_{\Omega_0} e^{2\vert G\vert} \int_{\Omega_0}\Vert F\circ \gamma_j\Vert^2dm(z).$$
Using the reproducing property of the Bergman kernel we have for all $z\in \Omega_0$,
$$F(z)=\int_{\Omega_0}Id_N\otimes B_{\Omega_0}(z,w)F(w)dm(w), $$
and therefore by (\ref{bergest2}) and Cauchy-Schwarz we have
$$\sup_{z\in \Omega_0}\Vert F(\gamma_j(z))\Vert\leq  \int_{\Omega_0} \vert B_{\Omega_0}(\gamma_j z,w)\vert \Vert F(w)\Vert dm(w)$$
$$ \leq \sup_{\Omega_0}\vert \psi' \vert^2 \frac{\mathrm{Vol}(\Omega_0)}{\pi(1-R_j)^2} \Vert F \Vert_{\HN}$$
with $R_j=\sup_{z\in \Omega_0}\vert \psi(\gamma_j z)\vert$. All these estimates are uniform in $N$ and we are done. $\square$

\subsection{Bounding the trace norm of $\lt_N$}
In this subsection we will prove the following fact. We recall that given a trace class operator $T:\mathcal{H}\rightarrow \mathcal{H}$ on a Hilbert space $\mathcal{H}$, the Schatten norms are defined for $p=1,\ldots,\infty$ by
$$\Vert T \Vert_p:=\left ( \sum_{j=0}^\infty \mu_j(T)^p\right)^{1/p}.$$
The norm for $p=1$ is called the trace norm while for $p=2$ it is nothing but the Hilbert-Schmidt norm.
\begin{propo}
\label{tracest1}
There exists $C_3>0$ such that for all $N$ large we have
$$\Vert \lt_N \Vert_{1}\leq C_3N.$$
\end{propo}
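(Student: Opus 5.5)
By the triangle inequality for the trace norm, and since $\lt_N=\sum_{j=1}^d e^{G\circ\gamma_j}U_j\otimes T_{\gamma_j}$, it suffices to bound each term $\Vert e^{G\circ\gamma_j}U_j\otimes T_{\gamma_j}\Vert_1$ by $C N$ with $C$ independent of $N$. Write this term as $(U_j\otimes \mathrm{Id})\circ(\mathrm{Id}_N\otimes A_j)$, where $A_j:H^2(\Omega_0)\to H^2(\Omega_0)$ is the scalar operator $f\mapsto e^{G\circ\gamma_j}f\circ\gamma_j$. Since $U_j\otimes \mathrm{Id}$ is unitary on $\HN=\C^N\otimes H^2(\Omega_0)$, the singular values of the product are those of $\mathrm{Id}_N\otimes A_j$. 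These are the singular values of $A_j$, each repeated $N$ times. Hence $\Vert e^{G\circ\gamma_j}U_j\otimes T_{\gamma_j}\Vert_1=N\Vert A_j\Vert_1$, and the whole statement reduces to showing that the scalar weighted composition operator $A_j$ is trace class. Then $C_3=\sum_j\Vert A_j\Vert_1$ works, in fact for all $N$.

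To show $A_j$ is trace class, I would exploit the strict contraction $\overline{\gamma_j(\Omega_0)}\subset\Omega_0$. Pick an intermediate smooth simply connected domain $\Omega_1$ with $\overline{\gamma_j(\Omega_0)}\subset\Omega_1\subset\overline{\Omega_1}\subset\Omega_0$. For instance, take the preimage under the conformal map $\psi$ of a disc $D(0,\rho)$ with $R_j<\rho<1$, where $R_j=\sup_{\Omega_0}|\psi\circ\gamma_j|$ as before. Then factor $A_j=C_j\circ \iota$. Here $\iota:H^2(\Omega_0)\to H^2(\Omega_1)$ is restriction, and $C_j:H^2(\Omega_1)\to H^2(\Omega_0)$ is $g\mapsto e^{G\circ\gamma_j}g\circ\gamma_j$.

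The operator $C_j$ is bounded by the argument of the crude norm bound: point evaluations on the compact set $\overline{\gamma_j(\Omega_0)}\subset\Omega_1$ are uniformly bounded, and $e^{G}$ is bounded. The restriction $\iota$ is trace class; in fact its singular values decay exponentially. To see this, transport by the isometry $\mathscr{J}$ to the disc. There the map becomes restriction from $H^2(\D)$ to $H^2(D(0,\rho))$, up to a bounded multiplier coming from $\psi'$ on the compact set $\overline{\Omega_1}$. This restriction is diagonal in the monomial bases ${\bf e}_\ell$ with singular values $\rho^{\ell+1}$, which are summable. Since $\Vert C_j\iota\Vert_1\le\Vert C_j\Vert\,\Vert\iota\Vert_1$, we get $\Vert A_j\Vert_1<\infty$.

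The main obstacle is the careful comparison between restriction on $\Omega_0$ and restriction on the disc model. The change of variables $f\mapsto \psi'\cdot(f\circ\psi)$ intertwines the restrictions exactly, once we note that $\mathscr{J}$ restricted to $\Omega_1=\psi^{-1}(D(0,\rho))$ is also an isometry $H^2(D(0,\rho))\to H^2(\Omega_1)$. With that observation, $\iota$ is unitarily equivalent to the disc restriction, and no multiplier estimate is needed. Alternatively, one can simply cite the result of \cite{BJ1} that composition operators induced by strict holomorphic contractions are trace class, and combine it with the tensor factorization above.
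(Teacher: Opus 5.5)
Your argument is correct, but it takes a different route from the paper's.

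\textbf{The paper's route.} The paper never splits $\lt_N$ into its $d$ terms and never uses the tensor structure abstractly. It applies the elementary bound $\Vert T\Vert_1\le\sum_\ell\Vert Te_\ell\Vert$, which follows from $\langle\sqrt{T^*T}e_\ell,e_\ell\rangle\le\Vert Te_\ell\Vert$, to the explicit basis $\delta_j\otimes\mathbf{g}_\ell$ with $\mathbf{g}_\ell=\sqrt{(\ell+1)/\pi}\,\psi'\psi^\ell$. It then estimates $\Vert\lt_N(\delta_j\otimes\mathbf{g}_\ell)\Vert\le C\sqrt{\ell+1}\,\rho^\ell$ directly from $\sup_z|\psi(\gamma_k z)|\le\rho<1$. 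The factor $N$ comes from the $N$ choices of $\delta_j$.

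\textbf{Your route.} You get the factor $N$ exactly, from unitary invariance of the trace norm and the identity $\Vert \mathrm{Id}_N\otimes A_j\Vert_1=N\Vert A_j\Vert_1$. You then prove $A_j$ is trace class by factoring $A_j=C_j\circ\iota$ through $\Omega_1=\psi^{-1}(D(0,\rho))$. Here $\iota$ is unitarily equivalent, via $\mathscr{J}$ and its restriction to $\Omega_1$, to the restriction $H^2(\D)\to H^2(D(0,\rho))$, which is diagonal with singular values $\rho^{\ell+1}$. The steps all check out:
\begin{itemize}
\item $\overline{\gamma_j(\Omega_0)}\subset\psi^{-1}(\overline{D(0,R_j)})\subset\Omega_1$;
\item $C_j$ is bounded, via the Bergman-kernel bound on that compact set;
\item the ideal inequality $\Vert C_j\iota\Vert_1\le\Vert C_j\Vert\,\Vert\iota\Vert_1$ applies.
\end{itemize}

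\textbf{Comparison.} The paper's proof is a single hands-on estimate that gives an explicit constant. Yours is more structural and yields more. It shows $\mu_\ell(A_j)\le\Vert C_j\Vert\rho^{\ell+1}$, so each weighted composition operator lies in an exponential class. That is exactly the property the paper needs later for the polynomial method, where it re-derives it separately by a min-max argument on the same basis.
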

\noindent{\it Proof}. Recall that by definition we have 
$$\Vert \lt_N \Vert_1=\sum_j \mu_j=\mathrm{Tr}(\sqrt{\lt^*_N\lt_N}e_\ell)=\sum_\ell \langle \sqrt{\lt^*_N\lt_N}e_\ell,e_\ell\rangle, $$
where $(e_\ell)_{\ell \in \N}$ is any Hilbert basis of $\mathcal{H}_N$. By Cauchy-Schwarz this is bounded as
$$\Vert \lt_N \Vert_1\leq \sum_\ell \Vert  \sqrt{\lt^*_N\lt_N}e_\ell \Vert=\sum_\ell \Vert  \lt_Ne_\ell \Vert.$$
We will carry this estimate by using a rather "explicit" basis of $H^2(\Omega_0)$ via the Riemann mapping Theorem (since $\Omega_0$ is simply connected).
Recall the $\psi$ is a conformal mapping $\psi:\Omega_0\rightarrow \D$,
where $\D$ is the unit disc, such that $\psi$ and $\psi'$ extend continuously to $\partial \Omega_0$. 
A basis of $H^2(\Omega_0)$ is obtained for $\ell=0,\ldots,\infty$ as 
$${\mathbf g}_\ell(z):=\psi'(z)\sqrt{\frac{\ell+1}{\pi}}\left ( \psi(z)\right)^\ell.$$
A full Hilbert basis of $\mathcal{H}$ is then given for $j=1,\ldots,N$ and $\ell=0,\ldots,\infty$ by
$$\delta_j\otimes {\mathbf g}_\ell, $$
where $\delta_1,\ldots,\delta_N$ is the canonical basis of $\C^N$. We have therefore to estimate
$$\sum_{j=1}^N\sum_{\ell=0}^\infty \Vert \lt_N(\delta_j\otimes {\mathbf g}_\ell)\Vert_{\mathcal{H}_N}.$$
Let us bound
$$\Vert \lt_N(\delta_j\otimes {\mathbf g}_\ell)\Vert_{\mathcal{H}_N}^2=\int_{\Omega_0}\Vert \lt_N(\delta_j\otimes {\mathbf g}_\ell)\Vert^2dm(z)$$
$$\leq d \sup_{z\in \Omega_0} e^{2\vert G(z)\vert} \sum_{k=1}^d \int_{\Omega_0}\Vert U_k(\delta_j)  {\mathbf g}_\ell(\gamma_k z)\Vert^2dm(z),$$
where we have used the standard inequality
$$\Vert Z_1+\ldots+Z_d \Vert^2\leq d \sum_{k=1}^d\Vert Z_k\Vert^2, $$
valid for all $Z_1,\ldots,Z_d\in \C^N$, which follows directly from Cauchy-Schwarz. Using unitarity of each $U_k$, we are left with estimating
$$\int_{\Omega_0}\vert{\mathbf g}_\ell(\gamma_k z)\vert^2dm(z)=\frac{\ell+1}{\pi}\int_{\Omega_0}\vert \psi'(\gamma_kz)(\psi(\gamma_kz))^\ell\vert^2 dm(z),$$
$$\leq \frac{\ell+1}{\pi} \sup_{\Omega_0}\vert \psi'\vert^2 \int_{\Omega_0}\vert \psi(\gamma_kz))\vert^{2\ell} dm(z).$$
By assumption each $\gamma_k(\Omega_0)$ has a {\it compact closure} inside $\Omega_0$, therefore there exists $0<\rho<1$, independent of $N$ such that
$$ \max_k \sup_{z\in \Omega_0}\vert \psi(\gamma_kz))\vert^{2\ell}\leq \rho^{2\ell}.$$
As a conclusion, there exists a large constant $C>0$ such that we have (uniformly in $N$)
$$ \sum_{j=1}^N\sum_{\ell=0}^\infty \Vert \lt_N(\delta_j\otimes {\mathbf g}_\ell)\Vert_{\mathcal{H}_N}\leq C \sum_{j=1}^N \sum_{\ell=0}^\infty \sqrt{\ell+1}\rho^\ell= \widetilde{C}N,$$
and the proof is done. $\square$

Note that in general, $\psi$ is not explicit and therefore $\widetilde{C}$ is not fully explicit, unless $\Omega_0$ is a disc, in which case it is fully computable. 

\bigskip Let us end this section by giving a very short proof of Theorem \ref{Eigest}. By writing in the style of Markov inequality
$$ \#\{ j\in \N :\ \vert \lambda_j(\lt_N)\vert \geq r^{-1}\}=\sum_{\vert \lambda_j\vert \geq 1/r}1
\leq r \sum_{\vert \lambda_j\vert \geq 1/r}\vert \lambda_j\vert \leq r \sum_{j=0}^\infty \vert \lambda_j\vert,$$
one can then use Weyl inequalities, see \cite{BSimon} 1.6, to get 
$$ \#\{ j\in \N :\ \vert \lambda_j(\lt_N)\vert \geq r^{-1}\}\leq r \Vert \lt_N\Vert_1\leq C_3 rN,$$
and the proof is now complete. 

Note that we have also obtained, by the exact same idea, the deterministic upper bound for all $r>0$,
\begin{equation}
\label{singest1}
\mathcal{N}(r):=\# \left \{ j\in \N\ :\ \mu_j(\lt_N)\geq \frac{1}{r}\right \}\leq C_3 rN,
\end{equation}
which will be useful below (and was used at the end of $\S 1$).
\section{Expectation and variance of  $\Vert \lt_N \Vert_2^2$}
The lower bound of $\mathcal{N}(r)$ from Theorem \ref{WS} will follow from the asymptotic analysis of the Hilbert-Schmidt norm (squared) $\Vert \lt_N \Vert^2_2=\sum_{j=0}^\infty \mu_j^2(\lt_N).$
We first need to introduce the necessary probabilistic tools involved in the proofs.
\subsection{Random permutations and fixed points statistics}
 As stated in the introduction, permutations matrices $U_1,\ldots,U_d$ are chosen randomly, according to the uniform measure on $\mathcal{S}_N$, and independently. Alternatively, we can view it as choosing a random homomorphism $\phi_N:\mathbb{F}^d\rightarrow \mathcal{S}_N$, where $\mathbb{F}^d$ is the {\it abstract free group on $d$ generators}
 $$\mathbb{F}^d:=\langle a_1,a_2,\ldots,a_d;a_1^{-1},\ldots, a_d^{-1}\rangle.$$
 Indeed, for each generator $a_j$ we choose $\phi_N(a_j)$ to be a permutation $\sigma_j\in \mathcal{S}_N$ taken uniformly among $N!$ possible choices. Since there are no non-trivial relations in $\mathbb{F}^d$, the homomorphism $\phi_N$ is then uniquely defined. This leads to a natural probability measure on $\mathrm{Hom}(\mathbb{F}^d,\mathcal{S}_N)$, denoted by $\Prob_N$.
 According to this notations, we have that for all $j=1,\ldots,d$, $U_j$ is the permutation matrix associated to $\sigma_j:=\phi_N(a_j)$. We will clarify notations by denoting $U_j$ by $U_{\sigma_j}$ instead.
 Given a word $\gamma=\gamma_1\gamma_2\ldots \gamma_p\in \mathbb{F}^d$ with $\gamma_i\in \{a_1,a_2,\ldots,a_d,a_1^{-1},\ldots, a_d^{-1}\}$, we will denote by $U_{\phi_N(\gamma)}$ the permutation matrix associated to $\phi_N(b)\in \mathcal{S}_N$. Note that we always have the identity
 $$\mathrm{Tr}(U_{\phi_N(\gamma)})=F_N(\gamma), $$
 where $F_N(\gamma)$ is the {\it number of fixed points} of the permutation $\phi_N(\gamma) \in \mathcal{S}_N$. It is classical exercise in combinatorics to show that, when taken uniformly, the expectation of the number of fixed points of a random permutation is exactly $1$. In our situation, for longer words $\gamma \in \mathbb{F}^d$, the distribution of $F_N(\gamma)$ is in general not uniform. 
Our main tool is the following result. We will denote by
 $\mathbb{F}^d_0$ the set of {\it primitive words} in $\mathbb{F}^d$, i.e. words which are not a non-trivial power of another element. The identity word does not belong to $\mathbb{F}^d_0$.
 
 \begin{propo}
\label{Stat1} Under the above notations, the following holds.
\begin{enumerate}
\item For all $\gamma \in \mathbb{F}^d_0$ and $k\geq 1$, we have as $N$ goes to $\infty$,
$$\Exp_N( F_n(\gamma^k))=d(k)+O\left( \frac{1}{N} \right),$$
where $d(k)$ denotes the number of divisors of $k$.
\item For all $\gamma_1,\gamma_2 \in \mathbb{F}^d_0$ such that $\gamma_1\not \in \{\gamma_2,\gamma_2^{-1} \}$, for all $k_1,k_2 \geq 1$, we have as $N\rightarrow \infty$,
$$\Exp_N( F_N(\gamma_1^{k_1}) F_N(\gamma_2^{k_2}))=\Exp_N( F_n(\gamma_1^{k_1}))\Exp_N( F_n(\gamma_2^{k_2}))+O\left( \frac{1}{N} \right).$$
\item For all $k_1,k_2\geq 1$ and $\gamma \in \mathcal{P}$, the limit 
$$\lim_{N\rightarrow +\infty} \Exp_N\left [  \left(F_N(\gamma^{k_1})-\Exp_N(F_N(\gamma^{k_1}))\right)\left(F_N(\gamma^{k_2})-\Exp_N(F_N(\gamma^{k_2}))\right)\right]$$
$$:=\mathcal{V}(k_1,k_2)$$
exists. If $k_1=k_2=1$ then $\mathcal{V}(1,1)=1$. Moreover  we have the formula
$$\mathcal{V}(k_1,k_2)=\sum_{k \vert \gcd(k_1,k_2)} k.$$
\end{enumerate}
\end{propo}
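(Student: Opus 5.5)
The plan is to derive all three items from Nica's theorem on the limiting joint distribution of fixed-point counts of word maps under uniform random homomorphisms $\phi_N:\mathbb{F}^d\to\mathcal{S}_N$, together with the quantitative $O(1/N)$ expansions of Puder and coauthors. Nica's statement reads as follows. Fix primitive words $\gamma_1,\ldots,\gamma_m\in\mathbb{F}^d_0$, pairwise non-conjugate and with no $\gamma_i$ conjugate to $\gamma_j^{-1}$; this is harmless because $F_N$ is invariant under conjugation and inversion. Then the family $\left(F_N(\gamma_i^{k})\right)_{i,k}$ converges jointly, in distribution and with all mixed moments, to $\left(\sum_{\ell\mid k}\ell\, Z^{(i)}_\ell\right)_{i,k}$. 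Here the $Z^{(i)}_\ell$ are independent Poisson variables with parameter $1/\ell$. The underlying reason is that the number of $\ell$-cycles of $\phi_N(\gamma_i)$ converges to $\mathrm{Poisson}(1/\ell)$, just as for a uniform permutation, and $F_N(\gamma^k)=\sum_{\ell\mid k}\ell\,C_\ell(\phi_N(\gamma))$, where $C_\ell$ counts $\ell$-cycles.

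The $O(1/N)$ rates come from the fact that mixed moments of cycle counts are rational functions of $N$ for $N$ large. This is the Nica, Linial--Puder and Puder--Zimhoni expansion. Each such expectation $\Exp_N\big(\prod F_N(w_i)\big)$ is expressed as a finite sum over quotients of core graphs, each term a rational function in $N$ with a limit. The error is therefore $O(1/N)$ rather than merely $o(1)$. The identity only holds for $N$ larger than the total word length, but that is fine for fixed words.

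With this input the three items are direct computations.
\begin{enumerate}
\item Compute $\Exp\big(\sum_{\ell\mid k}\ell Z_\ell\big)=\sum_{\ell\mid k}\ell\cdot\frac1\ell=d(k)$.
\item Use independence across distinct conjugacy classes (up to inversion) of primitive words in the limit. A hypothesis stated only as $\gamma_1\notin\{\gamma_2,\gamma_2^{-1}\}$ should be read up to conjugacy, as one implicitly must.
\item Compute $\mathrm{Cov}\big(\sum_{\ell\mid k_1}\ell Z_\ell,\sum_{\ell\mid k_2}\ell Z_\ell\big)=\sum_{\ell\mid \gcd(k_1,k_2)}\ell^2\mathrm{Var}(Z_\ell)=\sum_{\ell\mid\gcd}\ell$. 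This gives $\mathcal{V}(1,1)=1$ and the stated formula.
\end{enumerate}
Convergence of the second moments, not just convergence in distribution, follows because the moment statements themselves are what the rational-expansion theorem provides.

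The main obstacle is item (1) with a uniform $O(1/N)$ error when $\gamma$ is primitive but not cyclically reduced, and, more seriously, justifying the moment convergence rather than only convergence in law. For this I would invoke directly Nica's result, ``On the number of cycles of given length of a free word in several random permutations,'' which proves convergence of the cycle counts together with all their moments. I would add Puder's observation that the expectations are rational in $N$ for $N$ large, which yields the $O(1/N)$ rate.
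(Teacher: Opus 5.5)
Your proposal is correct and follows the paper's route: the paper obtains this proposition directly from Nica's limit theorem for cycle counts of non-power words and the results of Puder--Zimhoni, deferring the derivation to \cite{Naud1}, and your decomposition $F_N(\gamma^k)=\sum_{\ell\mid k}\ell\, C_\ell(\phi_N(\gamma))$ with independent $\mathrm{Poisson}(1/\ell)$ limits is exactly what gives $d(k)$ and $\sum_{\ell\mid \gcd(k_1,k_2)}\ell$. Your caveat that the hypothesis of (2) must be read up to conjugacy and inversion is also right (e.g.\ $\gamma_1=a_1a_2$, $\gamma_2=a_2a_1$ give $F_N(\gamma_1)=F_N(\gamma_2)$), and it does not affect the paper's use, since words $a_{i_1}^{-1}a_{i_2}$, $a_{j_1}^{-1}a_{j_2}$ with $(i_1,i_2)\notin\{(j_1,j_2),(j_2,j_1)\}$ are never conjugate to each other or to each other's inverse.
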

Let us comment on this above Proposition. In $(2)$, the remainder is not uniform with respect to $\gamma_1,\gamma_2$ and $k_1,k_2$. Notice that since $F_n(\gamma^k)=F_n(\gamma^{-k})$, one can also
use $(3)$ if $k_1,k_2$ are not both positive. The major output of the above proposition is that, in the large $N$ limit, the averages and covariances of the random variables $F_n(\gamma^k)$ asymptotically depend only on the powers $k$'s. This statement follows from the work of Nica \cite{Nica1}, see also the more general paper of Puder and Zimhoni \cite{PZ1}. For the exact derivation of the above Proposition from \cite{PZ1}, see \cite{Naud1}.

\subsection{Trace formula and expectation of $\Vert \lt_N \Vert_2^2$}
Let us compute $\Vert \lt_N \Vert_2^2$ deterministically. Let $(\delta_j \otimes {\bf g}_\ell)$, with $j=1,\ldots,N$ and $\ell=0,\ldots,\infty$ be a Hilbert basis of $\HN$, as in the proof of Proposition \ref{tracest1}.
We know that
$$\Vert \lt_N \Vert_2^2=\mathrm{Tr}(\lt_N^*\lt_N)=\sum_{\ell,j} \Vert \lt_N(\delta_j \otimes {\bf g}_\ell) \Vert_{\HN}^2.$$
Writing
$$\Vert \lt_N(\delta_j \otimes {\bf g}_\ell) \Vert_{\HN}^2=\sum_{1\leq k_1,k_2\leq d} \int_{\Omega_0} e^{G\circ \gamma_{k_1}}e^{\overline{G\circ \gamma_{k_2}}}\langle U_{k_2}^*U_{k_1} \delta_j,\delta_j\rangle_{\C^N}
{\bf g}_\ell\circ \gamma_{k_1} \overline{{\bf g}_\ell\circ \gamma_{k_2}}dm,$$
we can then use Fubini and uniform convergence of $\sum_\ell {\bf g}_\ell(z) \overline{{\bf g}_\ell(w)}$ to $B_{\Omega_0}(z,w)$ on compact subsets of $\Omega_0$ to get the trace formula:
\begin{equation}
\label{trace1}
\Vert \lt_N \Vert_2^2=\sum_{1\leq k_1,k_2\leq d} \mathrm{Tr}(U_{k_2}^*U_{k_1})\int_{\Omega_0} e^{G\circ \gamma_{k_1}(z)}e^{\overline{G\circ \gamma_{k_2}(z)}}B_{\Omega_0}(\gamma_{k_1}z,\gamma_{k_2}z)dm(z). 
\end{equation}
We can now prove the first claim of Proposition \ref{Moments}. Observe that according to our notations, we have
$$ \mathrm{Tr}(U_{k_2}^*U_{k_1})=\mathrm{Tr}(U_{\phi_N(a_{k_2}^{-1}a_{k_1})})=F_N(a_{k_2}^{-1}a_{k_1}),$$
and therefore for all $k_1\neq k_2$ we have by Proposition \ref{Stat1} $(1)$,
$$\lim_{N\rightarrow \infty} \frac{\Exp_N(\mathrm{Tr}(U_{k_2}^*U_{k_1}))}{N}=\lim_{N\rightarrow \infty} \frac{\Exp_N(F_N(a_{k_2}^{-1}a_{k_1}))}{N}=0.$$
This leads straightforwardly to the formula
$$\lim_{N\rightarrow \infty} \frac{\Exp_N(\Vert \lt_N \Vert_2^2)}{N}=\sum_{j=1}^d  \int_{\Omega_0}\left\vert e^{G\circ \gamma_{j}(z)}\right \vert^2
B_{\Omega_0}(\gamma_{j}z,\gamma_{j}z)dm(z). $$
Note that the constant
$$L_1:=\sum_{j=1}^d  \int_{\Omega_0}\left\vert e^{G\circ \gamma_{j}(z)}\right \vert^2
B_{\Omega_0}(\gamma_{j}z,\gamma_{j}z)dm(z),$$
is indeed positive, by positivity of the Bergman kernel on the diagonal, see Proposition \ref{bergest1}. We have actually the lower bound
$$L_1\geq d e^{-2\sup_{\Omega_0}\vert G\vert} \frac{\mathrm{Vol}(\Omega_0)}{\pi \mathbf{R_0}^2},$$
where $\mathbf{R_0}$ is such that $\Omega_0\subset D(x_0,\mathbf{R_0})$, by the lower bound from Proposition \ref{bergest1}.

\subsection{Limit Variance of $\Vert \lt_N \Vert_2^2$}
We now turn our attention to the variance 
$$\mathbb{V}_N^{(2)}:=\Exp_N\left ( \left \vert \Vert \lt_N\Vert_2^2 - \Exp_N(\Vert \lt_N\Vert_2^2)\right \vert^{2}\right)$$
$$=\Exp_N\left (  \Vert \lt_N\Vert_2^4 \right)-\left (\Exp_N\left (  \Vert \lt_N\Vert_2^2 \right) \right)^2.$$
We will actually show, using Proposition \ref{Stat1}, that $\lim_{N\rightarrow +\infty} \mathbb{V}_N^{(2)}$ exists and has an explicit formula.
Let us set for all $i,j\in \{1,\ldots,d\}$,
$$H_{i,j}:=\int_{\Omega_0} e^{G\circ \gamma_{i}(z)}e^{\overline{G\circ \gamma_{j}(z)}}B_{\Omega_0}(\gamma_{i}z,\gamma_{j}z)dm(z). $$
Note that we always have $H_{j,i}=\overline{H_{i,j}}$. By the trace formula (\ref{trace1}), we have
$$\mathbb{V}_N^{(2)}=$$
$$\sum_{1\leq i_1,i_2\leq d \atop 1\leq j_1,j_2\leq d} \left \{ \Exp_N(F_N(a_{i_1}^{-1}a_{i_2})F_N(a_{j_1}^{-1}a_{j_2}))-  
\Exp_N(F_N(a_{i_1}^{-1}a_{i_2}) )\Exp_N(F_N(a_{j_1}^{-1}a_{j_2}))  \right \}H_{i_1,i_2} \overline{H_{j_1,j_2}}.$$
We observe first that if $i_1=i_2$ then we get
$$\Exp_N(F_N(a_{i_1}^{-1}a_{i_2})F_N(a_{j_1}^{-1}a_{j_2}))-  
\Exp_N(F_N(a_{i_1}^{-1}a_{i_2}) )\Exp_N(F_N(a_{j_1}^{-1}a_{j_2}))$$
$$=\Exp_N(NF_N(a_{j_1}^{-1}a_{j_2}))-N\Exp_N(F_N(a_{j_1}^{-1}a_{j_2}))=0.$$
Same outcome if $j_1=j_2$. We are thus left with the sum
$$ \mathbb{V}_N^{(2)}=$$
$$\sum_{ i_1\neq i_2 \atop j_1\neq j_2} \left \{ \Exp_N(F_N(a_{i_1}^{-1}a_{i_2})F_N(a_{j_1}^{-1}a_{j_2}))-  
\Exp_N(F_N(a_{i_1}^{-1}a_{i_2}) )\Exp_N(F_N(a_{j_1}^{-1}a_{j_2}))  \right \}H_{i_1,i_2} \overline{H_{j_1,j_2}}.$$
Assuming that $(i_1,i_2)\not \in \{(j_1,j_2),(j_2,j_1) \}$ then since both words $a_{i_1}^{-1}a_{i_2}$ and $a_{j_1}^{-1}a_{j_2}$ are reduced in $\mathbb{F}^d$, 
we know that $a_{i_1}^{-1}a_{i_2}\not \in \{ a_{j_1}^{-1}a_{j_2}, (a_{j_1}^{-1}a_{j_2})^{-1} \}$. We can therefore apply Proposition \ref{Stat1} $(2)$ to get
$$\lim_{N\rightarrow +\infty } \left (\Exp_N(F_N(a_{i_1}^{-1}a_{i_2})F_N(a_{j_1}^{-1}a_{j_2}))-  
\Exp_N(F_N(a_{i_1}^{-1}a_{i_2}) )\Exp_N(F_N(a_{j_1}^{-1}a_{j_2}))\right) =0.$$
On the other hand, if $(i_1,i_2) \in \{(j_1,j_2),(j_2,j_1) \}$, noticing that the words $a_{i_1}^{-1}a_{i_2}$ are primitive (not proper powers),  and using the fact that
$$F_N(a_{i_1}^{-1}a_{i_2})=F_N((a_{i_1}^{-1}a_{i_2})^{-1}),$$
we can apply 
Proposition \ref{Stat1} $(3)$ which yields in both cases $(i_1,i_2)=(j_1,j_2)$, $(i_1,i_2)=(j_2,j_1)$
$$\lim_{N\rightarrow +\infty } \Exp_N\left ((F_N(a_{i_1}^{-1}a_{i_2}))^2 \right)-\left(\Exp_N(F_N(a_{i_1}^{-1}a_{i_2}) )\right)^2=\mathcal{V}(1,1)=1.$$
Therefore we have obtained
$$L_2:=\lim_{N\rightarrow +\infty}\mathbb{V}_N^{(2)}=\sum_{i_1\neq i_2} H_{i_1,i_2}(\overline{H_{i_1,i_2}}+ \overline{H_{i_2,i_1}})$$
$$=\sum_{i_1\neq i_2} H_{i_1,i_2}(\overline{H_{i_1,i_2}}+ H_{i_1,i_2})=2\sum_{i_1\neq i_2} H_{i_1,i_2}\Re(H_{i_1,i_2})$$
$$=4\sum_{i_1<i_2}  (\Re(H_{i_1,i_2}))^2.$$

One may wonder at this point if $L_2$ is in general positive. Going back to the example of the introduction, we have in this case
$$H_{j,j+1}=\frac{4}{9\pi} \int_{D(1,3/2)} \frac{dm(z)}{\left ( 1-\frac{4}{9}( \gamma_j(z)-1)( \overline{\gamma_{j+1}(z)}-1)\right)^2}.$$
We have the bound 
$$\sup_{z\in \Omega_0}\vert \gamma_j(z)\vert \leq \frac{1}{j-1/2},$$ 
hence uniform convergence on $\Omega_0=D(1,3/2)$ of $\gamma_j(z)$ to $0$ as $j\rightarrow +\infty$. This allows us to obtain
$$\lim_{j\rightarrow +\infty} H_{j,j+1}=\frac{4}{9\pi} \int_{D(1,3/2)} \frac{dm(z)}{\left ( 1-\frac{4}{9}\right)^2}=\frac{81}{25}.$$
Therefore if $d$ is taken large enough, for the example stated in the introduction, we have $L_2>0$.

\section{Higher moments and Poisson distributions}
Let us set in the following
$$X_N:= \Vert \lt_N\Vert_2^2 - \Exp_N(\Vert \lt_N\Vert_2^2).$$
We recall that we have, using the above notations
$$X_N=2\sum_{1\leq i<j\leq d}\left( F_N(a_i^{-1}a_j)-\Exp_N(F_N(a_i^{-1}a_j))\right) \Re(H_{i,j}),$$
and using Proposition \ref{Stat1} (each word $a_i^{-1}a_j$ is primitive), we know that for all $i<j$,
$$\Exp_N(F_N(a_i^{-1}a_j))=1+O\left( \frac{1}{N}\right),$$
therefore we have as $N\rightarrow +\infty$,
$$X_N=2Y_N-2\sum_{1\leq i<j\leq d}\Re(H_{i,j})+O\left( \frac{1}{N}\right),$$
where we have set
$$Y_N:= \sum_{1\leq i<j\leq d}F_N(a_i^{-1}a_j)\Re(H_{i,j}).$$
We will {\it compute below the limit of each moments of $Y_N$}, which imply convergence of all moments of $X_N$, and then use a classical result to obtain convergence in distribution
to a linear combination of independent Poisson variables, which is part $(3)$ of Proposition \ref{Moments}. We first recall basic facts about linear combinations of independent Poisson variables.
\subsection{Poisson variables}
A Poisson variable $Z$ with parameter $\lambda$ obeys the probability law for $k=0,\ldots,\infty$
$$\Prob(Z=k)=\frac{e^{-\lambda}\lambda^k}{k!}.$$
The (non-centered) moments are given by
$$\Exp\left( Z^{k}\right)=\sum_{\ell=0}^k S(k,\ell)\lambda^\ell,$$
where $S(k,\ell)$ are so-called Stirling numbers, which is the number of ways to partition a set of $k$ objects into $\ell$ non-empty subsets.

The case $\lambda=1$, which is especially relevant for us, gives
$$\Exp\left( Z^{k}\right)=B_k,$$
where $B_k$ is the Bell number (which counts the total number of partitions of set of $k$ objects) defined by the recursion formula
$$B_{k+1}=\sum_{\ell=0}^k \binom{k}{\ell}B_\ell, $$
and do satisfy the classical Dobinski formula:
$$B_k=e^{-1}\sum_{\ell=0}^\infty \frac{\ell^k}{\ell!},$$
which is just another way of writing 
$$\Exp(Z^k)=\sum_{\ell=0}^\infty \Prob(Z=\ell) \ell^k.$$
It is also important to recall that the moment generating function is known
$$\Exp(e^{tZ})=\sum_{\ell=0}^\infty \frac{t^\ell \Exp(Z^\ell)}{\ell!}=e^{e^t-1},$$
and is in particular finite for all $t\in \R$.
In this section, we are interested in identifying the moments of a linear combination
$$Y=\sum_{p=1}^m\alpha_p Z_p,$$
where  $Z_1,\ldots,Z_p$ are independent Poisson variables with parameter $\lambda=1$ and $\alpha_1,\ldots,\alpha_m \in \R$.
By the multinomial formula and independence of $Z_1,\ldots,Z_m$, we clearly have
$$\Exp(Y^k)=\sum_{\ell_1+\ell_2+\ldots+\ell_m=k} \frac{k!}{\ell_1!\ldots \ell_m!} \alpha_1^{\ell_1}\ldots \alpha_m^{\ell_m} \Exp\left (Z_1^{\ell_1}\ldots Z_m^{\ell_m} \right)$$
\begin{equation}
\label{momentform1}
=k! \sum_{\ell_1+\ell_2+\ldots+\ell_m=k} \frac{ \alpha_1^{\ell_1}}{\ell_1!}\ldots \frac{ \alpha_m^{\ell_m}}{\ell_m!}B_{\ell_1}\ldots B_{\ell_m}.
\end{equation}
Note that we have the bound for each $k\geq 0$ and $t\geq 0$,
$$\frac{\vert \Exp(Y^k)\vert t^k}{k!}\leq \sum_{\ell_1+\ell_2+\ldots+\ell_m=k} \frac{ (\vert\alpha_1\vert t)^{\ell_1}}{\ell_1!}\ldots \frac{ (\vert \alpha_m\vert t)^{\ell_m}}{\ell_m!}B_{\ell_1}\ldots B_{\ell_m},$$
and thus
\begin{equation}
\label{uniqueness1}
\sum_{k=0}^\infty  \frac{\vert \Exp(Y^k)\vert t^k}{k!} \leq \prod_{j=1}^m \left ( \sum_{\ell=0}^\infty \frac{(\vert \alpha_j\vert t )^\ell}{\ell!}B_\ell \right)
=e^{-m}\exp\left (\sum_{j=1}^m e^{t\vert \alpha_j\vert}\right).
\end{equation}
We therefore have convergence of 
$$\Exp( e^{tY})=\sum_{k=0}^\infty  \frac{ \Exp(Y^k) t^k}{k!} $$
for all $t\in \R$.
\subsection{The method of moments} 
We will apply below the following classical facts on moments of random variables. A basic reference is for example the book of Billingsley \cite{Bill1}.
\begin{thm}
\label{Momentmethod}
\begin{itemize}
\item A Borel probability measure $\mu$ on $\R$ is said to be determined by its moments if for all $k\in \N$,
$$a_k:=\int_\R x^k d\mu(x)$$
exists and $\mu$ is the only probability measure with this sequence of moments $(a_k)_{k\in \N}$.
\item A sufficient condition for a probability measure $\mu$ to be determined by its moments is the absolute convergence of the series
$$\sum_{k=0}^\infty a_k \frac{t^k}{k!}$$
for some $t>0$.
\item Let $(W_N)$ be a sequence of real random variables, defined on  probability spaces $(\Omega_N,\Prob_N)$ and $W$ a real random variable, defined on $(\Omega, \Prob)$, whose distribution is determined by its moments.
If we have for all $k\in \N$,
$$\lim_{N\rightarrow +\infty }\Exp_N(W_N^k)=\Exp(W^k), $$
then $W_N$ converges as $N\rightarrow \infty$ to $W$ in distribution.
\end{itemize}
\end{thm}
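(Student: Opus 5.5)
The first item is only a definition, so the work is in the second and third items. I would follow the standard route in Billingsley \cite{Bill1}: analyticity of the characteristic function for the second item, and tightness plus uniform integrability for the third.

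\emph{Second item.} Suppose $\sum_k |a_k| t^k/k!<\infty$ for some $t>0$, where $a_k$ are the moments of $\mu$. I will first show that $\int e^{s|x|}\,d\mu<\infty$ for every $0<s<t$.
\begin{itemize}
\item The even terms give $\sum_k a_{2k} s^{2k}/(2k)!<\infty$, and $a_{2k}=\int |x|^{2k}\,d\mu$.
\item For the odd powers, use $|x|^{2k+1}\le 1+x^{2k+2}$. Since $s<t$, the factor $(2k+2)/s$ is absorbed by the geometric gain $(s/t)^{2k+2}$, so the odd terms are also summable.
\end{itemize}
Now let $\nu$ be any probability measure with the same moments. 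It has the same even moments, so the same bound gives $\int e^{s|x|}\,d\nu<\infty$. Consequently both characteristic functions $\varphi_\mu(u)=\int e^{iux}\,d\mu$ and $\varphi_\nu$ extend holomorphically to the strip $\{|\Im u|<s\}$. By dominated convergence their derivatives at $0$ are $i^k a_k$, so they coincide; hence the two functions agree near $0$. By the identity theorem they agree on the whole strip, in particular on $\R$. Fourier uniqueness then gives $\nu=\mu$.

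\emph{Third item.} Let $\mu$ be the law of $W$ and $\mu_N$ the law of $W_N$.
\begin{itemize}
\item Since $\Exp_N(W_N^2)$ converges, it is bounded. Chebyshev's inequality then makes $(\mu_N)$ tight.
\item By Prokhorov, every subsequence has a further subsequence converging weakly to some probability measure $\nu$.
\item For each fixed $k$, the moments $\Exp_N(W_N^{2k+2})$ are bounded. Hence $x^k$ is uniformly integrable along the subsequence: the tail contributions $\int_{|x|>M}|x|^k\,d\mu_N\le M^{-k-2}\sup_N \Exp_N(W_N^{2k+2})$ are uniformly small.
\item Combining weak convergence with this uniform integrability (truncate $x^k$ by a continuous bounded function and control the tail) gives $\int x^k\,d\nu=\lim \Exp_N(W_N^k)=\Exp(W^k)$.
\end{itemize}
So $\nu$ has the moments of $\mu$, and determinacy forces $\nu=\mu$. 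Every subsequence therefore has a further subsequence converging to $\mu$, which implies that the whole sequence $\mu_N$ converges weakly to $\mu$, i.e.\ $W_N\to W$ in distribution.

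The main obstacle is the passage of moments to the weak limit, that is, the uniform integrability step, together with checking in the second item that the competing measure $\nu$ inherits the exponential integrability. Both points rest on controlling odd moments and tails by even moments of higher order.
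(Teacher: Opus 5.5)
Your proposal is correct. The paper gives no proof of its own and simply refers to Billingsley \cite{Bill1}, and your argument is the standard one found there: for the second item, exponential integrability of $|x|$ controlled by the even moments; for the third item, tightness via Chebyshev, Prokhorov, uniform integrability from bounded higher even moments, and the subsequence principle. The only cosmetic difference is in the second item: you get uniqueness by extending the characteristic functions holomorphically to a strip and invoking the identity theorem, whereas Billingsley continues the Taylor expansion of the characteristic function step by step along the real axis, which amounts to the same thing.
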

Convergence in distribution means that for all $x\in \R$ such that $x$ is a point of continuity of the distribution function $x\mapsto \Prob(W\leq x)$, we have
$$\lim_{N\rightarrow +\infty} \Prob_N(W_N\leq x)=\Prob(Z\leq x). $$
It is also equivalent to say that for all {\it bounded continuous} function $f:\R\rightarrow \R$, we have
$$\lim_{N\rightarrow \infty} \Exp_N( f(W_N))=\Exp(f(W)).$$
Note that by (\ref{uniqueness1}), it is clear that any distribution of the type
$$Z=D_0+\sum_{p=1}^m\alpha_p Z_p, $$
where $D_0$ is a constant, $\alpha_1,\ldots,\alpha_p$ are real constant coefficients and $Z_1,\ldots,Z_p$ are independent Poisson variables with parameter $1$, is uniquely determined by its moments.
Indeed we have for all $t>0$, and setting $Y=\sum_{p=1}^m\alpha_p Z_p$, 
$$\sum_k \frac{t^k}{k!}\vert Y+D_0\vert^k\leq \sum_k \frac{t^k}{k!}( \vert Y\vert +\vert D_0\vert)^k=e^{t\vert Y\vert}e^{t\vert D_0\vert},$$
and we can use the fact that $D_0$ is a constant and $\vert Y\vert \leq \sum_p \vert \alpha_p\vert Z_p$ to justify, using (\ref{uniqueness1}), that for any $t>0$, 
$$\Exp\left ( \sum_k \frac{t^k}{k!}\vert Y+D_0\vert^k\right)\leq \Exp( e^{t\vert Y\vert})e^{t\vert D_0\vert}<\infty.$$
\subsection{Convergence of moments and end of proof of Proposition \ref{Moments}}
Our analysis will rely on the following facts, which are essentially due to Nica \cite{Nica1} (see also the related paper \cite{DJPP}) for free groups. We also point out the work of Y. Moaz \cite{Moaz} for the case of surface groups. A general statement
for more general groups is also given in \cite{PZ1}. We state below the bare minimum required for our needs.
\begin{propo}
\label{decorrelation1}
Using the preceding notations, the following holds.
\begin{enumerate}
\item Given a primitive word $\gamma \in \mathbb{F}_0^d$, we have for all $k\in \N$,
$$\lim_{N\rightarrow +\infty} \Exp_N( (F_N(\gamma))^k)=B_k,$$
where $B_k$ is as above the $k$-th Bell number.
\item Consider a finite set of distincts primitive words $(\gamma_j)_{j\in \mathcal{F}}\in  \mathbb{F}_0^d$ such that for all $i\neq j$ we have $\gamma_i\neq \gamma_j^{-1}$.
Fix a set of non-negative integers $(k_j)_{j\in \mathcal{F}}$. Then we have as $N$ goes to infinity,
$$\Exp\left ( \prod_{j\in \mathcal{F}} (F_N(\gamma_j))^{k_j}\right)=\prod_{j\in \mathcal{F}}\Exp\left ( (F_N(\gamma_j))^{k_j}\right)+O\left (\frac{1}{N}\right).$$
\end{enumerate}
\end{propo}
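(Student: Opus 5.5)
The plan is to compute everything through the random graph (Schreier graph) point of view, as in Nica's work. For a word $\gamma$, $F_N(\gamma)$ counts the indices $x\in\{1,\ldots,N\}$ with $\phi_N(\gamma)(x)=x$. Equivalently, it counts closed paths in the random $d$-labelled Schreier graph that start at $x$ and read the letters of $\gamma$.

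For a product $\prod_{j\in\mathcal{F}} F_N(\gamma_j)^{k_j}$, I would expand each power as a sum over tuples of starting points. For each tuple, the union of the corresponding closed paths is the image of a labelled quotient of the disjoint union of cycles $C_{\gamma_j}$, with $k_j$ copies of each. Here $C_{\gamma}$ is the labelled cycle graph spelling $\gamma$. Concretely:
\begin{equation*}
\Exp_N\Big(\prod_j F_N(\gamma_j)^{k_j}\Big)=\sum_{\Gamma}\#\{\text{injective labelled embeddings of }\Gamma\}\cdot\Prob_N(\Gamma\subset\text{Schreier graph}),
\end{equation*}
where $\Gamma$ runs over "folded" quotients. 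The probability term for a fixed labelled graph with $v$ vertices and $e_i$ edges of colour $i$ equals $\prod_i (N-e_i)!/N!$. The number of embeddings is $N(N-1)\cdots(N-v+1)$. Hence each $\Gamma$ contributes $N^{\chi(\Gamma)}(1+O(1/N))$, with $\chi=v-e$.

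Every component of $\Gamma$ contains a cycle, so $\chi\le 0$ on each component. Equality holds exactly when every component is a single cycle. Therefore only quotients whose components are cycles survive as $N\to\infty$. There are finitely many quotients for fixed data, so the error is uniformly $O(1/N)$.

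The key combinatorial step is identifying these cycle quotients. A component that is a single cycle $C$ receiving several cycles $C_{\gamma_j}$ means that each $\gamma_j$ (cyclically reduced, after conjugation) is a power of the word read around $C$. Since the $\gamma_j$ are primitive, each $C_{\gamma_j}$ must map isomorphically onto $C$, and $C$ reads $\gamma_j$ up to rotation. Two distinct primitive words with $\gamma_i\neq\gamma_j^{\pm1}$ (I will assume they are also non-conjugate cyclically reduced, as for the words $a_i^{-1}a_j$ used later, or else reduce to conjugacy classes) cannot share a component. Orientation reversal is excluded because $\gamma_j^{-1}$ reads differently.

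So the limit factorizes over $j$, which gives (2). For a single primitive $\gamma$, the surviving configurations are determined as follows. First partition the $k$ copies into blocks, each block landing on one common cycle. Second, choose the rotation by which each copy is identified with the block's cycle. A primitive word has exactly $|\gamma|$ rotations, and the starting points on a cycle with $|\gamma|$ vertices also number $|\gamma|$, so after normalizing by starting vertices each block contributes weight exactly $1$. The count is then the number of set partitions of $\{1,\ldots,k\}$, namely $B_k$, which gives (1).

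The main obstacle is the careful bookkeeping of this normalization. Each starting point $x$ corresponds to a choice of base vertex on $C_\gamma$, and I must check that non-cyclically-reduced words, through their tails, do not change $\chi$. For this I would first replace $\gamma$ by its cyclic reduction, using that $F_N$ is conjugation invariant. After that reduction, I would verify that the leading coefficient is exactly one per partition rather than a multiple.
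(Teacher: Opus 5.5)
Your proposal is correct in substance, but it does not follow the paper. The paper gives no argument here and imports the proposition from Nica \cite{Nica1} (see also \cite{DJPP,PZ1}). You instead give a self-contained proof by the standard quotient-graph expansion:
\begin{itemize}
\item sum over ordered tuples of starting points;
\item group them by the equivalence relation they induce on the vertices of $\bigsqcup C_{\gamma_j}$;
\item note that each folded quotient $\Gamma$ contributes $N(N-1)\cdots(N-v+1)\prod_i (N-e_i)!/N!=N^{\chi(\Gamma)}(1+O(1/N))$;
\item keep only the patterns whose components are cycles.
\end{itemize}
Your route buys an elementary proof in which the $O(1/N)$ rate in (2) is visible directly, since there are finitely many patterns and each is a rational function of $N$. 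It also makes the exact hypothesis needed for decorrelation transparent. The citation buys far more: joint statistics of cycles of all lengths and of proper powers, which is what the $d(k)$ and $\mathcal{V}(k_1,k_2)$ formulas of Proposition \ref{Stat1} require. The price is that it is a black box here.

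Your extra non-conjugacy hypothesis is not optional, because (2) is false as literally stated. Take $\gamma_1=a_1a_2$ and $\gamma_2=a_2a_1$ (or $\gamma_2=a_1^{-1}a_2^{-1}$). These words are distinct, not proper powers, and satisfy $\gamma_1\neq\gamma_2^{-1}$. Yet $F_N(\gamma_1)=F_N(\gamma_2)$, so $\Exp_N(F_N(\gamma_1)F_N(\gamma_2))\to B_2=2$ while the product of the expectations tends to $1$. The right condition is that $\gamma_i$ is not conjugate to $\gamma_j^{\pm1}$ for $i\neq j$. This holds for the words $a_i^{-1}a_j$, $i<j$, actually used in the paper, so nothing downstream breaks; the same correction applies to Proposition \ref{Stat1} (2). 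So drop the phrase ``or else reduce to conjugacy classes'': for conjugate words the factorization simply fails.

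Two steps need tightening.
\begin{itemize}
\item A connected graph with $\chi=0$ is a cycle possibly with trees attached. Exclude the trees by noting that every edge is traversed by a closed walk reading a cyclically reduced word. In a folded graph such a walk is cyclically non-backtracking, so it never enters a hanging tree.
\item There is no ``normalization by starting vertices'', since you sum over ordered tuples $(x_1,\dots,x_k)$. The clean fact is that for cyclically reduced primitive $\gamma$, the labelled cycle $C_\gamma$ has no nontrivial label-preserving automorphism. Its $|\gamma|$ rotations are distinct words, and a reflection would make $\gamma$ conjugate to $\gamma^{-1}$, which is impossible in a free group for $\gamma\neq 1$.
\end{itemize}
Hence two copies on the same cycle component are glued by the identity, so all starting points in a block coincide. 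Each set partition of $\{1,\dots,k\}$ then yields exactly one $\chi=0$ pattern, whose contribution tends to $1$, and the limit is $B_k$.
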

Clearly by the moment method, $(1)$ implies convergence in distribution of $F_N(\gamma)$ to a Poisson variable with parameter $1$, which is the main result in \cite{Nica1}. Statement
$(2)$ is an asymptotic multiple decorrelation result for distincts words in $\mathbb{F}_0^d$ which extends Proposition \ref{Stat1} $(2)$.

Let us recall that we have set
$$X_N=2Y_N+D_N, $$
where
$$D_N=-2\sum_{1\leq i<j\leq d}\Re(H_{i,j})+O\left( \frac{1}{N}\right) $$
and
$$Y_N:= \sum_{1\leq i<j\leq d}F_N(a_i^{-1}a_j)\Re(H_{i,j}). $$
We first write for $k$ fixed, by the multinomial formula
$$\Exp_N \left ( Y_N^k\right)=\mathlarger{\mathlarger{\mathlarger{\sum}}}_{\sum_{i<j}\ell_{i,j}=k}\frac{k!}{\prod_{i<j} \ell_{i,j}!} \left (\prod_{i<j} (\Re(H_{i,j}))^{\ell_{i,j}}\right) \Exp_N\left ( \prod_{i<j}(F_N(a_i^{-1}a_j))^{\ell_{i,j}}\right).$$
Since the summation is over indices $(i,j)$ such that $1\leq i<j\leq d$, whenever $(i_1,j_1)\neq(i_2,j_2)$, we can't have $a_{i_1}^{-1}a_{i_2}=(a_{j_1}^{-1}a_{j_2})^{-1}$, hence we can actually apply Proposition \ref{decorrelation1} $(2)$ which yields 
$$\Exp_N \left ( Y_N^k\right)=$$
$$\mathlarger{\mathlarger{\mathlarger{\sum}}}_{\sum_{i<j}\ell_{i,j}=k}\frac{k!}{\prod_{i<j} \ell_{i,j}!} \left (\prod_{i<j} (\Re(H_{i,j}))^{\ell_{i,j}}\right) \left ( \prod_{i<j}\Exp_N\left ((F_N(a_i^{-1}a_j))^{\ell_{i,j}}\right)\right)+O\left (\frac{1}{N}\right).$$
Applying Proposition \ref{decorrelation1} $(1)$ for each term $\Exp_N\left ((F_N(a_i^{-1}a_j))^{\ell_{i,j}}\right),$ individually we get immediately ($k$ being fixed),
$$\lim_{N\rightarrow+\infty}  \Exp_N \left ( Y_N^k\right)=\mathlarger{\mathlarger{\mathlarger{\sum}}}_{\sum_{i<j}\ell_{i,j}=k}\frac{k!}{\prod_{i<j} \ell_{i,j}!} \left (\prod_{i<j} (\Re(H_{i,j}))^{\ell_{i,j}}\right) \left ( \prod_{i<j}B_{\ell_{i,j}}\right).$$
We recognize from (\ref{momentform1}) the formula for $\Exp(Z^k)$ where $Z$ is the random variable
$$Z=\sum_{i<j} \Re(H_{i,j}) Z_{i,j},$$
with $Z_{i,j}$ being independent Poisson variables with parameter $1$. To prove part $(2)$ and $(3)$ of Proposition \ref{Moments}, we just observe that for all $k$ we have
$$\Exp_N\left(X_N^k\right)=\Exp_N\left( (2Y_N+D_N)^k\right)=\sum_{\ell=0}^k \binom{k}{\ell}2^\ell\Exp_N\left(Y_N^\ell\right) (D_N)^{k-\ell},$$
so that
$$L_k:=\lim_{N\rightarrow +\infty}\Exp_N\left(X_N^k\right)= \sum_{\ell=0}^k \binom{k}{\ell}2^\ell\Exp\left(Z^\ell\right) (D_0)^{k-\ell}=\Exp\left( (2Z+D_0)^k\right),$$
with
$$D_0=\lim_{N\rightarrow =\infty}D_N=-2\sum_{1\leq i<j\leq d}\Re(H_{i,j}).$$
Since $2Z+D_0$ is uniquely determined by its moments, the method of moments implies convergence in distribution of $X_N$ to $2Z+D_0$, thus finishing the proof of Proposition \ref{Moments}.

For the record, we have already computed 
$$L_2=4\sum_{i_1<i_2}  (\Re(H_{i_1,i_2}))^2.$$
Using the above formula (or alternative formulas for moments of centered Poisson distributions) we can also compute:
$$L_3=8\sum_{i_1<i_2}  (\Re(H_{i_1,i_2}))^3,\ L_4=16 \sum_{i_1<i_2}  (\Re(H_{i_1,i_2}))^4+48\left (  \sum_{i_1<i_2}  (\Re(H_{i_1,i_2}))^2\right)^2.$$

\section{The polynomial method and trace asymptotics}
This section is devoted to the proof of Theorem \ref{poly1}. We will rely on an adaptation (to the infinite dimensional case) of one the main inequalities that appear in the so-called "polynomial method" as used in \cite{CGTVH} for free groups. We start with some functional analytic preliminaries.
\subsection{Exponential classes}
Let $\mathcal{H}$ be a complex separable Hilbert space. We denote by $B(\mathcal{H})$ the $C^*$-algebra of bounded linear operator on $\mathcal{H}$. We denote by $S_1(\mathcal{H})\subset B(\mathcal{H})$ the 
subspace of {\it trace class operators}. Let  $a>0$. We define the exponential class $E(a,\mathcal{H})$ by
$$E(a,\mathcal{H})=\left\{ A \in S_1(\mathcal{H})\ :\ \sup_{j\in \N} \mu_j(A)e^{aj}<\infty \right\}.$$
Note that $E(a,\mathcal{H})$ is not a linear subspace of $S_1(\mathcal{H})$. Exponential classes have nevertheless some interesting stability properties that we summarize below.
\begin{propo}
\label{Exp1}
\begin{enumerate}
 \item If $A\in E(a,\mathcal{H})$ and $B\in B(\mathcal{H})$, then both $AB$ and $BA$ belong to $E(a,\mathcal{H})$.
 \item If $A\in E(a,\mathcal{H})$, so does its adjoint $A^*$.
 \item Assume that $A_k \in E(a_k,\mathcal{H})$ for $k=1,\ldots,r$, then we have
 $$\sum_{k=1}^r A_k\in E(a',\mathcal{H}),\ \mathrm{with}\ a'=\frac{1}{\sum_{k=1}^r a_k^{-1}}.$$
\end{enumerate}
\end{propo}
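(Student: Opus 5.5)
The three items rely on the standard Ky Fan type inequalities for singular values. For $T,S\in B(\mathcal{H})$ and all $i,j\in\N$,
$$\mu_{i+j}(T+S)\leq \mu_i(T)+\mu_j(S),\qquad \mu_{i+j}(TS)\leq \mu_i(T)\mu_j(S),$$
together with $\mu_j(TS)\leq \Vert T\Vert \mu_j(S)$ and $\mu_j(ST)\leq \Vert T\Vert\mu_j(S)$. All of these follow from the min-max characterisation $\mu_j(T)=\min\{\Vert T-F\Vert\ :\ \mathrm{rank}(F)\leq j\}$. The plan is to use these inequalities and then check the exponential decay rates directly.

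Items (1) and (2) are immediate. For (1), if $\mu_j(A)\leq C e^{-aj}$, then $\mu_j(AB)\leq \Vert B\Vert \mu_j(A)\leq C\Vert B\Vert e^{-aj}$, and the same bound holds for $BA$. Trace class is preserved since $S_1$ is an ideal. For (2), $A^*A$ and $AA^*$ have the same nonzero eigenvalues with multiplicities, because $A^*=V|A|V^*$ on the appropriate subspaces via the polar decomposition. Hence $\mu_j(A^*)=\mu_j(A)$ for all $j$.

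Item (3) is the only step that requires a choice. Suppose $\mu_j(A_k)\leq C_k e^{-a_k j}$ and set $C=\max_k C_k$. Given $n\in\N$, split $n$ as $n=j_1+\ldots+j_r+s$ with $0\leq s<r$, choosing
$$j_k=\left\lfloor \frac{a' n}{a_k}\right\rfloor, \qquad \text{so that}\qquad \sum_k j_k\leq n\sum_k \frac{a'}{a_k}=n$$
and $j_k\geq a'n/a_k-1$. Iterating the additive Ky Fan inequality, and using that singular values are nonincreasing so the leftover $s$ can only help, gives
$$\mu_n\Big(\sum_k A_k\Big)\leq \mu_{j_1+\ldots+j_r}\Big(\sum_k A_k\Big)\leq \sum_{k=1}^r \mu_{j_k}(A_k)\leq \sum_k C e^{-a_k j_k}\leq \sum_k Ce^{a_k}e^{-a'n}.$$
Therefore $\sup_n \mu_n(\sum A_k)e^{a'n}<\infty$. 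Since $\sum A_k$ is trace class as a finite sum, it lies in $E(a',\mathcal{H})$.

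The only delicate point is the bookkeeping of the integer parts, and the constant $e^{\max_k a_k}$ absorbs it. Otherwise the argument reduces to the classical inequalities, which I would quote from Simon's book \cite{BSimon} rather than reprove.
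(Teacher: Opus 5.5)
Your proof is correct. Items (1) and (2) follow the paper: the bound $\mu_j(AB)\leq \Vert B\Vert \mu_j(A)$, and the fact that $A^*A$ and $AA^*$ share their nonzero eigenvalues with multiplicities. One small slip: the polar decomposition $A=V|A|$ gives $|A^*|=V|A|V^*$, i.e.\ $AA^*=V(A^*A)V^*$, not $A^*=V|A|V^*$. More simply, $\mu_j(A^*)=\mu_j(A)$ follows directly from the approximation-number formula you already quote, since $\Vert A^*-F^*\Vert=\Vert A-F\Vert$ and $\mathrm{rank}(F^*)=\mathrm{rank}(F)$.

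For (3) your route differs from the paper's. The paper gives no argument and refers to Bandtlow \cite{Bandtlow1}, whose proof goes through rearrangement inequalities. Roughly, the additive Ky Fan inequality dominates the singular values of $\sum_k A_k$, up to a factor $r$ and an index shift, by the decreasing rearrangement of the merged family $\{C_k e^{-a_k j}\ :\ j\in\N,\ 1\leq k\leq r\}$. The number of terms of this family exceeding $t$ is $\sum_k a_k^{-1}\log(C_k/t)+O(1)=(a')^{-1}\log(1/t)+O(1)$ as $t\to 0$. So the rate $a'=(\sum_k a_k^{-1})^{-1}$ appears because counting functions add.

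Your choice $j_k=\lfloor a'n/a_k\rfloor$ carries out by hand exactly the balancing $a_kj_k\approx a'n$ that this counting encodes. It needs only the iterated Ky Fan inequality and monotonicity of $(\mu_n)$, and it gives the explicit bound $\mu_n(\sum_k A_k)\leq (\max_k C_k)\sum_k e^{a_k}e^{-a'n}$. It also extends to stretched-exponential decay $\mu_j=O(e^{-aj^\alpha})$ by taking $j_k$ proportional to $a_k^{-1/\alpha}$. The rearrangement formulation is the more flexible tool when the majorants are arbitrary decreasing sequences rather than pure exponentials. For the statement at hand, your argument is complete and self-contained.
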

Fact $(1)$ follows from $\mu_j(AB)\leq \Vert B \Vert \mu_j(A)$, see Simon \cite{BSimon}, Theorem 1.5. Claim $(2)$ is due to the fact that $A$ and $A^*$ have the same singular value sequence. Indeed,
for all $\lambda\neq 0$ the map $A$ is an isomorphism
$$A:\mathrm{Ker}(A^*A-\lambda)\rightarrow \mathrm{Ker}(AA^*-\lambda). $$
For a proof of $(3)$, based on rearrangement inequalities, see Bandtlow in \cite{Bandtlow1}. 

Since we have the well known fact that for compact operators $A$, singular values coincide with approximation numbers i.e.,
$$\mu_j(A)=\inf\{ \Vert A-F\Vert\ :\ F\in B(\mathcal{H}),\ \mathrm{rank}(B)<j\}, $$
it follows that if $A\in E(a,\mathcal{H})$, for all $\epsilon>0$, one can find a sequence $(F_j)_j$ of finite rank operators with $\mathrm{rank}(F_j)\leq j$ such that
$$\Vert A-F_j\Vert\leq Ce^{-(a-\epsilon)j},$$
for some constant $C>0$. We will need a slightly stronger approximation result, for the trace norm.
\begin{propo}
\label{Approxtr}
Let $A\in E(a,\mathcal{H})$, then there exist $C,\widetilde{a}>0$ and a sequence of finite rank operators $(A_k)_k$ with $\mathrm{rank}(A_k)\leq k$ such that for all $k$ we have
$$\Vert A-A_k\Vert_{1}\leq C e^{-\widetilde{a}k}.$$
Note that since $\Vert A-A_k\Vert=\mu_0(A-A_k)\leq \Vert A-A_k\Vert_{1}$, the same estimate is also implied for the operator norm.
\end{propo}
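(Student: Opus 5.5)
We will use the truncated singular value decomposition. Since $A$ is compact, write
$$A=\sum_{j=0}^\infty \mu_j(A)\,\langle \cdot , e_j\rangle f_j,$$
where $(e_j)$ and $(f_j)$ are orthonormal families; these exist by the polar decomposition $A=U\vert A\vert$ and the spectral theorem applied to $\vert A\vert=\sqrt{A^*A}$. For each $k\geq 1$ we set
$$A_k:=\sum_{j=0}^{k-1}\mu_j(A)\,\langle \cdot , e_j\rangle f_j,$$
which has rank at most $k$. For $k=0$ we take $A_0=0$; then $\Vert A-A_0\Vert_1=\Vert A\Vert_1<\infty$, and this term is absorbed into the constant $C$.

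The key step is to identify the singular values of the remainder. The operator $A-A_k=\sum_{j\geq k}\mu_j(A)\langle\cdot,e_j\rangle f_j$ is itself written in Schmidt form with orthonormal families $(e_j)_{j\ge k}$ and $(f_j)_{j\ge k}$. Consequently its singular values are exactly $(\mu_j(A))_{j\geq k}$: indeed $(A-A_k)^*(A-A_k)=\sum_{j\geq k}\mu_j(A)^2\langle\cdot,e_j\rangle e_j$. It follows that
$$\Vert A-A_k\Vert_1=\sum_{j\geq k}\mu_j(A).$$

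Finally we use the hypothesis $A\in E(a,\mathcal{H})$, which gives $\mu_j(A)\leq M e^{-aj}$ for all $j$, for some constant $M$. Summing the geometric tail,
$$\Vert A-A_k\Vert_1\leq M\sum_{j\geq k}e^{-aj}=\frac{M}{1-e^{-a}}\,e^{-ak}.$$
So the statement holds with $\widetilde a=a$ and $C=\max\bigl(M(1-e^{-a})^{-1},\Vert A\Vert_1\bigr)$. The operator norm estimate follows as remarked in the statement, since $\Vert A-A_k\Vert=\mu_0(A-A_k)\leq\Vert A-A_k\Vert_1$.

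The only point needing care is the identification of the singular values of the tail $A-A_k$. One could instead quote the Eckart–Young/Allakhverdiev characterization together with $\mu_{j}(A-A_k)=\mu_{j+k}(A)$, but the direct Schmidt-form computation above is self-contained, so we do not expect a genuine obstacle.
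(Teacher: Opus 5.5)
Your proof is correct, but it takes a different route from the paper. The paper first splits $A$ into its Cartesian parts $\frac{A+A^*}{2}$ and $\frac{A-A^*}{2i}$. It then invokes the sum-stability property (3) of Proposition~\ref{Exp1}, which rests on Bandtlow's rearrangement inequalities, to place both parts in $E(a/2,\mathcal{H})$. Finally it truncates the spectral decomposition of each self-adjoint part. The recombination of the two truncations is left implicit and produces rank up to $2k$, so after re-indexing to rank $\leq k$ one only gets an exponent of order $a/4$.

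You instead truncate the Schmidt (singular value) decomposition of $A$ directly. This needs only the polar decomposition and the spectral theorem for $\vert A\vert$. The exact identity $\Vert A-A_k\Vert_1=\sum_{j\geq k}\mu_j(A)$ then gives the optimal rate $\widetilde a=a$, with no appeal to the nontrivial sum-stability of exponential classes and no recombination step.

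What the paper's route gives for free is that, when $A$ is self-adjoint, its approximants are automatically self-adjoint, which is used in the proof of Theorem~\ref{main1}. Your construction has the same property if, for self-adjoint $A$, you choose the vectors $e_j$ to be eigenvectors of $A$, so that $f_j=\pm e_j$. Otherwise, degenerate singular values (say eigenvalues $\pm\lambda$) allow Schmidt pairs whose truncation is not self-adjoint.

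A minor point: your separate treatment of $k=0$ is unnecessary. The tail formula already gives $\Vert A-A_0\Vert_1=\sum_{j\geq 0}\mu_j(A)\leq M(1-e^{-a})^{-1}$.
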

\noindent {\it Proof}. By writing
$$A=\frac{A+A^*}{2}+i\left ( \frac{A-A^*}{2i}\right),$$ 
we have $U=\frac{A+A^*}{2}$ and $S=\frac{A-A^*}{2i}$ which are both self-adjoint and belong to $E(a/2,\mathcal{H})$ by Proposition \ref{Exp1}. We are therefore left with the problem of approximating a compact {\it self-adjoint} operator, say $B$, belonging to some exponential class $E(a,\mathcal{H})$. By the usual spectral theorem, denoting by
$(\lambda_j)_j$ the eigenvalue sequence of $B$, ordered so that $\vert \lambda_1\vert\geq \vert \lambda_2\vert \geq \ldots \geq \vert \lambda_j\vert\geq \ldots$, there exists a Hilbert basis $(e_j)_j$ of $\overline{\mathrm{Range}(B)}$ so that for all $j\geq 1$, we have $B(e_j)=\lambda_j e_j$. Define then $B_k$ to be given for all $f\in \mathcal{H}$ by
$$B_k(f):=\sum_{j=1}^k \lambda_j \langle f,e_j\rangle_{\mathcal{H}}e_j.$$
Clearly $\mathrm{rank}(B_k)\leq k$ and since $B-B_k$ is self-adjoint we have
$$\Vert B-B_k\Vert_1=\sum_{j=k+1}^\infty \vert \lambda_j\vert,$$
with $\vert \lambda_j\vert=\mu_j(B)$. Since we have the upper bound $\mu_j(B)\leq Ce^{-aj}$, we deduce that
$$\Vert B-B_k\Vert_1 \leq C \sum_{j=k+1}^\infty e^{-aj} =C\frac{e^{-a(k+1)}}{1-e^{-a}},$$
and the proof is done. $\square$

A fundamental fact for us, which we have already proved in disguise, is the following.
\begin{propo}
 Let $\gamma:\Omega_0\rightarrow \Omega_0$  be  a holomorphic contraction such that $$\overline{\gamma(\Omega_0)}\subset \Omega_0,$$ then the composition operator $T_\gamma$ acting
 on $H^2(\Omega_0)$ is in an exponential class.
\end{propo}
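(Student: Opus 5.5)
We show that $\mu_j(T_\gamma)\leq Ce^{-aj}$ for some $C,a>0$. The plan is to produce, for each $k$, an explicit operator $F_k$ of rank at most $k$ with $\Vert T_\gamma-F_k\Vert\leq C\rho^k$ for a fixed $0<\rho<1$. The characterization of singular values as approximation numbers recalled above then gives $\mu_k(T_\gamma)\leq C\rho^k$, so $T_\gamma\in E(a,H^2(\Omega_0))$ with $a=-\log\rho$. Trace class membership is already known from Proposition \ref{tracest1} (take $N=1$, $d=1$, $G=0$), so only the exponential decay needs proof.

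We use the orthonormal basis ${\bf g}_\ell=\psi'\sqrt{(\ell+1)/\pi}\,\psi^\ell$ of $H^2(\Omega_0)$ from the proof of Proposition \ref{tracest1}. For $f=\sum_\ell c_\ell {\bf g}_\ell\in H^2(\Omega_0)$ we have $T_\gamma f=\sum_\ell c_\ell\, {\bf g}_\ell\circ\gamma$, with convergence in $H^2$ by the estimates below. Let $P_k$ be the orthogonal projection onto $\mathrm{span}\{{\bf g}_0,\ldots,{\bf g}_{k-1}\}$ and set $F_k:=T_\gamma P_k$, which has rank at most $k$. Then
$$\Vert (T_\gamma-F_k)f\Vert\leq \sum_{\ell\geq k}\vert c_\ell\vert\,\Vert {\bf g}_\ell\circ\gamma\Vert\leq \Vert f\Vert\Big(\sum_{\ell\geq k}\Vert {\bf g}_\ell\circ\gamma\Vert^2\Big)^{1/2},$$
by Cauchy--Schwarz.

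The key estimate was already obtained in the proof of Proposition \ref{tracest1}. Since $\overline{\gamma(\Omega_0)}$ is a compact subset of $\Omega_0$, we have $\sup_{\Omega_0}\vert\psi\circ\gamma\vert\leq\rho<1$. Together with the boundedness of $\psi'$ on $\overline{\Omega_0}$, this gives
$$\Vert {\bf g}_\ell\circ\gamma\Vert^2\leq \frac{\ell+1}{\pi}\sup\vert\psi'\vert^2\,\mathrm{Vol}(\Omega_0)\,\rho^{2\ell}.$$
Summing over $\ell\geq k$ yields $\Vert T_\gamma-F_k\Vert\leq C'\sqrt{k+1}\,\rho^k\leq C\rho_1^k$ for any fixed $\rho<\rho_1<1$. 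Hence $\mu_k(T_\gamma)\leq C\rho_1^k$.

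The only mildly delicate point is the justification of the setup: that $\psi'$ is bounded up to the boundary, which is the Dini-smoothness fact cited from Pommerenke, and that $\psi\circ\gamma$ stays in a disc of radius $\rho<1$. The latter holds because $\psi$ is continuous on the compact set $\overline{\gamma(\Omega_0)}\subset\Omega_0$, where $\vert\psi\vert<1$. Once these two facts are in place, the argument is the tail estimate above.
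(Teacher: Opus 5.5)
Your proof is correct and follows essentially the paper's route: the same basis ${\bf g}_\ell$ and the same key bound $\Vert T_\gamma {\bf g}_\ell\Vert\leq C\sqrt{\ell+1}\,\rho^\ell$, turned into a singular value estimate by truncating to the first $k$ basis vectors (the paper phrases this through min-max as $\mu_j(T_\gamma)\leq \sum_{k\geq j}\Vert T_\gamma e_k\Vert$, whereas your Cauchy--Schwarz step gives the slightly sharper $\ell^2$ tail). One minor point: $\psi'$ is only evaluated on the compact set $\overline{\gamma(\Omega_0)}\subset\Omega_0$, where it is automatically bounded, so the boundary regularity from Pommerenke that you flag as the delicate point is not actually needed.
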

\noindent {\it Outline of proof}. By the min-max theorem applied to $\sqrt{T_\gamma^*T_\gamma}$, we have 
$$\mu_j(T_\gamma)\leq \sum_{k=j}^\infty \Vert T_\gamma(e_k)\Vert_{H^2(\Omega_0)},$$
for all Hilbert basis $(e_k)$ of $H^2(\Omega_0)$. By using an "explicit" basis, exactly as in the proof of Proposition \ref{tracest1}, we obtain existence of $C>0$ and $0<\rho<1$ such that
for all $k$, $$\Vert T_\gamma(e_k)\Vert_{H^2(\Omega_0)}\leq C\rho^k, $$
and the proof is complete since we get immediately
$$\mu_j(T_\gamma)\leq C \frac{\rho^j}{1-\rho}.$$
$\square$

\subsection{Tracial states and group algebras} 
Let $\C[\mathbb{F}^d]$ be the group algebra of the free group $\mathbb{F}^d$, that is the set of formal sums 
$$x=\sum_{\gamma \in \mathbb{F}^d}a_\gamma.\gamma$$
where $a_\gamma \in \C$ and $\gamma \mapsto a_\gamma$ has finite support in $\mathbb{F}^d$. We recall that $\lambda: \mathbb{F}^d\rightarrow U(\mathbb{F}^d)$ is the left regular representation. We will denote by
$\lambda(x):=\sum_\gamma a_\gamma \lambda(\gamma)$ which is a bounded operator acting on $\ell^2(\mathbb{F}^d)$. 

Recall also that $\phi_N: \mathbb{F}^d\rightarrow S_N$ is a random permutation representation of the free group $\mathbb{F}^d$. Each permutation $\phi_N(\gamma)$ acts on $\ell^2(\{1,\ldots,N\})$ and we denote
$\mathcal{V}_N$ the orthogonal subspace to constant functions in $\ell^2(\{1,\ldots,N\})$. The representation $\phi_N:\mathbb{F}^d\rightarrow U(\mathcal{V}_N)$ is then denoted by $\pi_N$. For all 
$x=\sum_{\gamma \in \mathbb{F}^d}a_\gamma.\gamma\in \C[\mathbb{F}^d]$ we will use the notation $\pi_N(x)=\sum_\gamma a_\gamma \pi_N(\gamma)$.

Of primary interest for us will be the group algebras $\mathcal{M}_D(\C)[\mathbb{F}^d]$ and $S_1(\mathcal{H})[\mathbb{F}^d]$ which consists of formal sums
$$\mathscr{X}=\sum_\gamma A_\gamma.\gamma, $$
where $A_\gamma$ are respectively $D\times D$ complex matrices and trace class operators on a Hilbert space $\mathcal{H}$. We say that such an element $\mathscr{X}$ 
is {\it self-adjoint} iff
$A_{\gamma^{-1}}=A_\gamma^*$ for all $\gamma \in \mathbb{F}^d$.

In Both cases we define $\pi_N(\mathscr{X})$ and
$\lambda(\mathscr{X})$ by
$$\pi_N(\mathscr{X})=\sum_\gamma A_\gamma \otimes \pi_N(\gamma),\  \lambda(\mathscr{X})=\sum_\gamma A_\gamma \otimes \lambda(\gamma),$$
acting respectively on the tensor products $\C^D\otimes \mathcal{V}_N$ and $\C^D\otimes \ell^2(\mathbb{F}^d)$ or $\mathcal{H}\otimes \mathcal{V}_N$ and $\mathcal{H}\otimes \ell^2(\mathbb{F}^d)$.
We then define the "tracial state" $\tau$ to be 
$$\tau(\lambda(\mathscr{X})):=\sum_\gamma \mathrm{Tr}(A_\gamma)\langle \lambda(\gamma)\chi_e,\chi_e\rangle_{\ell^2(\mathbb{F}^d)}=\mathrm{Tr}(A_{Id}),$$
where $\chi_e(\gamma)=1$ if $\gamma=Id$, and $0$ elsewhere.

Classical results in free probability (namely asymptotic freeness of independent random permutation matrices) imply that for all $\mathscr{X}\in \mathcal{M}_D(\C)[\mathbb{F}^d]$, with $D$ fixed, we have
$$\lim_{N\rightarrow +\infty} \frac{\Exp_N(\mathrm{Tr}(\pi_N(\mathscr{X})))}{N}=\tau(\lambda(\mathscr{X})).$$
This result was first shown by Nica in \cite{Nica2}. The main idea of the polynomial method as explained in \cite{CGTVH} is to obtain similar asymptotics for self-adjoint elements $\mathscr{X}$, when $\pi_N(\mathscr{X})$ is replaced by $\varphi(\pi_N(\mathscr{X}))$ where $\varphi \in C^\infty(\R)$ is an arbitrary smooth, real valued, function. Achieving this task implies a soft proof of "strong asymptotic freeness", also called strong convergence in a wider context, which was first proved for random permutation matrices by Bordenave and Collins in \cite{BC1}.

The first basic question is to understand what $\tau(\varphi(\lambda(\mathscr{X})))$ means when $\varphi$ is a smooth function. Assume first that
$\varphi$ is a real  polynomial and write
$$\varphi( \lambda(\mathscr{X}))=\sum_\gamma B_\gamma\otimes \lambda(\gamma),$$
with $B_\gamma \in \mathcal{M}_D(\C)$. We have by definition
$$\tau(\varphi(\lambda(\mathscr{X})))=\sum_{\gamma \in \Gamma} \mathrm{Tr}(B_\gamma) \langle \lambda(\gamma) \chi_e,\chi_e\rangle_{\ell^2(\mathbb{F}^d)} ,$$
which by picking an orthonormal basis of $\C^D$ can be written as
$$\tau(\varphi(\lambda(\mathscr{X})))=\sum_{i=1}^D\sum_{\gamma \in \Gamma} \langle B_\gamma e_i,e_i \rangle_{\C^D} \langle \lambda(\gamma) \chi_e,\chi_e\rangle_{\ell^2(\mathbb{F}^d)}$$
$$=\sum_{i=1}^D \langle \varphi( \lambda(\mathscr{X})) e_i\otimes \chi_e,e_i\otimes \chi_e \rangle_{\C^D\otimes \ell^2(\mathbb{F}^d)}.$$
By Cauchy-Schwarz we simply get 
$$\vert \tau(\varphi(\lambda(\mathscr{X})))\vert \leq D \Vert  \varphi(\lambda(\mathscr{X}))\Vert.$$
Since $\varphi(\lambda(\mathscr{X}))$ is a bounded self-adjoint operator, its norms equals its spectral radius and applying the spectral mapping theorem gives
$$\vert \tau(\varphi(\lambda(\mathscr{X})))\vert \leq D \sup_{z\in \mathrm{Sp}(\lambda(\mathscr{X}))} \vert \varphi(z)\vert.$$
The linear functional $\varphi \mapsto \tau(\varphi(\lambda(\mathscr{X})))$ is therefore continuous on the Banach space $C^0(\mathrm{Sp}(\lambda(\mathscr{X})))$ and therefore extends by density from polynomials to any continuous function. Since it is also positive, Riesz theorem says that there exists a finite Radon measure $\mu$, supported in $\mathrm{Sp}(\lambda(\mathscr{X}))$ such that
$$\tau(\varphi(\lambda(\mathscr{X})))=\int \varphi d\mu. $$
We need an infinite-dimensional extension of this line of ideas.
We first make the following basic remark.
\begin{propo}
\label{trace1}
The tracial state $\tau$ is "faithfull" on $S_1(\mathcal{H})[\mathbb{F}^d]$ i.e. for all 
$\mathscr{Y} \in S_1(\mathcal{H})[\mathbb{F}^d]$ with
$$\mathscr{Y}=\sum_\gamma C_\gamma.\gamma,$$
we have for any Hilbert basis $(e_i)_{i\in \N}$ of $\mathcal{H}$,
$$\tau ((\lambda(\mathscr{Y}))^*\lambda( \mathscr{Y}))=\sum_\gamma  \mathrm{Tr}(C_\gamma^*C_{\gamma})=\sum_{i=0}^\infty \Vert \lambda(\mathscr{Y})e_i\otimes \chi_e \Vert^2.$$
\end{propo}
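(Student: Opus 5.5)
We compute $(\lambda(\mathscr{Y}))^*\lambda(\mathscr{Y})$ as an element of the group algebra, apply the definition of $\tau$, and then reinterpret the resulting sum through a Hilbert basis of $\mathcal{H}$.

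\textbf{Step 1: the product in the group algebra.} Since $\lambda(\gamma)^*=\lambda(\gamma^{-1})$ and $\lambda$ is a homomorphism, we have
$$(\lambda(\mathscr{Y}))^*\lambda(\mathscr{Y})=\sum_{\gamma,\gamma'} C_\gamma^*C_{\gamma'}\otimes \lambda(\gamma^{-1}\gamma'),$$
which is a finite sum. Its coefficients are trace class, because $C_\gamma^*C_{\gamma'}$ is trace class whenever $C_{\gamma'}$ is. The coefficient of the identity collects exactly the pairs with $\gamma^{-1}\gamma'=Id$, that is $\gamma=\gamma'$. By the definition of $\tau$,
$$\tau((\lambda(\mathscr{Y}))^*\lambda(\mathscr{Y}))=\sum_\gamma \mathrm{Tr}(C_\gamma^*C_\gamma).$$
Each term equals $\Vert C_\gamma\Vert_2^2\geq 0$.

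\textbf{Step 2: the basis expression.} Fix a Hilbert basis $(e_i)$ of $\mathcal{H}$. We have
$$\lambda(\mathscr{Y})(e_i\otimes\chi_e)=\sum_\gamma C_\gamma e_i\otimes \chi_\gamma,$$
using $\lambda(\gamma)\chi_e=\chi_\gamma$. The vectors $\chi_\gamma$ are orthonormal in $\ell^2(\mathbb{F}^d)$, so Pythagoras gives
$$\Vert \lambda(\mathscr{Y})(e_i\otimes\chi_e)\Vert^2=\sum_\gamma \Vert C_\gamma e_i\Vert^2.$$
Summing over $i$ and exchanging the sums, which is allowed since all terms are nonnegative (Tonelli), yields
$$\sum_\gamma\sum_i \Vert C_\gamma e_i\Vert^2=\sum_\gamma \mathrm{Tr}(C_\gamma^*C_\gamma).$$
Here we use the standard fact that the Hilbert–Schmidt norm squared is basis independent.

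\textbf{Step 3: faithfulness.} The common value is a sum of $\Vert C_\gamma\Vert_2^2\geq0$. It therefore vanishes only if all $C_\gamma=0$, that is, only if $\mathscr{Y}=0$.

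The only delicate point is Step 1: one must check that $\tau$ of the product, computed as $\mathrm{Tr}$ of the identity coefficient, is consistent with the expression $\sum_i\langle\,\cdot\,(e_i\otimes\chi_e),e_i\otimes\chi_e\rangle$ used earlier for matrices. This holds because $\langle\lambda(\gamma)\chi_e,\chi_e\rangle=\delta_{\gamma,Id}$ and each coefficient is trace class, so the traces are finite and the finite sums can be rearranged freely.
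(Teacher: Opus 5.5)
Your proof is correct and is essentially the paper's argument. Step 1 is the same identity-coefficient computation, and Step 2 differs only cosmetically: you expand $\lambda(\mathscr{Y})(e_i\otimes\chi_e)=\sum_\gamma C_\gamma e_i\otimes\chi_\gamma$ and use orthogonality of the $\chi_\gamma$, whereas the paper expands each $\mathrm{Tr}(C_\gamma^*C_{\gamma'})$ in the basis and reassembles $\langle \lambda(\mathscr{Y})^*\lambda(\mathscr{Y})(e_i\otimes\chi_e),e_i\otimes\chi_e\rangle$. Your Step 3, showing that the common value vanishes only when all $C_\gamma=0$, is a correct small addition that makes the word ``faithful'' explicit.
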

\noindent {\it Proof}. Write
$$\mathscr{Y}=\sum_\gamma C_\gamma.\gamma,$$
so that
$$(\lambda(\mathscr{Y}))^*\lambda( \mathscr{Y})=\sum_{\gamma,\gamma'} C_\gamma^*C_{\gamma'}\otimes \lambda( \gamma^{-1}\gamma'), $$
and therefore
$$\tau ((\lambda(\mathscr{Y}))^*\lambda( \mathscr{Y}))=\sum_{\gamma,\gamma'} \mathrm{Tr}(C_\gamma^*C_{\gamma'})\langle \lambda( \gamma^{-1}\gamma')\chi_e,\chi_e\rangle
=\sum_\gamma  \mathrm{Tr}(C_\gamma^*C_{\gamma}).$$
On the other hand we have by definition of the trace,
$$\mathrm{Tr}(C_\gamma^*C_{\gamma'})=\sum_i \langle C_\gamma^*C_{\gamma'}e_i,e_i\rangle, $$
which also gives
$$\tau ((\lambda(\mathscr{Y}))^*\lambda( \mathscr{Y}))=\sum_i \sum_{\gamma,\gamma'}\langle [C_\gamma^*\otimes \lambda(\gamma^{-1})][  
C_{\gamma'}\otimes \lambda(\gamma')]e_i\otimes \chi_e,e_i\otimes \chi_e\rangle$$
$$=\sum_{i=0}^\infty \Vert \lambda(\mathscr{Y})e_i\otimes \chi_e \Vert^2,$$
and the result is proved. $\square$
 
\begin{propo}
\label{tracial}
Let $P$ be a real polynomial, and assume that 
$$\mathscr{X}=\sum_\gamma A_\gamma.\gamma\in S_1(\mathcal{H})[\mathbb{F}^d]$$ is self-adjoint and positive. Then we have 
$$\vert  \tau( \lambda(\mathscr{X}) P( \lambda(\mathscr{X})))\vert\leq \mathrm{Tr} \left (\sqrt{B}\right) \sup_{z\in \mathrm{Sp}(\lambda(\mathscr{X}))} \vert P(z)\vert,$$
where $B=\sum_{\gamma} A_\gamma^*A_\gamma$. Moreover, the functional 
$$P\mapsto   \tau( \lambda(\mathscr{X}) P( \lambda(\mathscr{X})))$$
extends to a finite positive Radon measure $\mu_{\mathscr{X}}$ whose support is included in $\mathrm{Sp}(\lambda(\mathscr{X}))$ and contains
$\mathrm{Sp}(\lambda(\mathscr{X}))\setminus\{0\}$.
\end{propo}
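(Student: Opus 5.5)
Since $\mathscr{X}$ is self-adjoint and positive, I would first factor it as $\lambda(\mathscr{X})=\lambda(\mathscr{Y})^*\lambda(\mathscr{Y})$. The natural guess is that in the application $\mathscr{X}$ comes from $\mathscr{Y}=\sum_j e^{G\circ\gamma_j}T_{\gamma_j}.a_j$. For a general positive $\mathscr{X}$, however, I would avoid a square root and work with the positive operator $M:=\lambda(\mathscr{X})$ directly, writing
$$\tau(MP(M))=\sum_i\langle MP(M)e_i\otimes\chi_e,e_i\otimes\chi_e\rangle,$$
which holds for any Hilbert basis $(e_i)$ of $\mathcal{H}$, exactly as in the finite-dimensional computation and Proposition \ref{trace1}.

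For the bound, the key step is to pick the basis $(e_i)$ to be an eigenbasis of $B=\sum_\gamma A_\gamma^*A_\gamma$. The operator $B$ is trace class and positive, so its square root is trace class exactly when $\mathrm{Tr}\sqrt{B}<\infty$; in our setting this follows from the exponential classes. Write $v_i=e_i\otimes\chi_e$. Since $M\geq 0$, we have $M=M^{1/2}M^{1/2}$, and $P(M)$ commutes with $M^{1/2}$. Hence
$$\langle MP(M)v_i,v_i\rangle=\langle P(M)M^{1/2}v_i,M^{1/2}v_i\rangle,$$
which is bounded by $\sup_{\mathrm{Sp}(M)}|P|\,\Vert M^{1/2}v_i\Vert^2=\sup|P|\,\langle Mv_i,v_i\rangle$. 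It then suffices to bound $\langle Mv_i,v_i\rangle\leq\Vert Mv_i\Vert$. Since $Mv_i=\sum_\gamma A_\gamma e_i\otimes\chi_\gamma$ with orthogonal components, we get
$$\Vert Mv_i\Vert^2=\sum_\gamma\Vert A_\gamma e_i\Vert^2=\langle Be_i,e_i\rangle,$$
and in the eigenbasis of $B$ this gives $\Vert Mv_i\Vert=\sqrt{\beta_i}$. Summing over $i$ yields $\mathrm{Tr}\sqrt{B}\,\sup|P|$. This is the core estimate, and the main point is to use positivity to get a linear rather than quadratic dependence on $M$ at each basis vector.

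For the measure, I would note that $P\mapsto\tau(MP(M))$ is a linear functional on polynomials. By Weierstrass density on the compact set $\mathrm{Sp}(M)\subset[0,\Vert M\Vert]$, the bound makes it continuous for the sup norm, so it extends to $C^0(\mathrm{Sp}(M))$. It is positive: if $P\geq0$ on $\mathrm{Sp}(M)$, then $\langle P(M)M^{1/2}v_i,M^{1/2}v_i\rangle\geq 0$, after approximating nonnegative continuous functions by polynomials and using the same identity. Riesz representation then gives a finite positive measure $\mu_{\mathscr{X}}$ supported in $\mathrm{Sp}(M)$.

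The main obstacle is the support containing $\mathrm{Sp}(M)\setminus\{0\}$, which I would prove by contradiction. Suppose $x_0\neq0$ lies in the spectrum but not in the support. Take a continuous $f\geq0$ supported in a small interval $I$ around $x_0$ that avoids $0$ and the support, with $f(x_0)>0$. Then $\tau(Mf(M))=0$, so $\langle f(M)M^{1/2}v_i,M^{1/2}v_i\rangle=0$ for all $i$. This gives $\sqrt{f}(M)M^{1/2}(e_i\otimes\chi_e)=0$, and by linearity $\sqrt{f}(M)M^{1/2}(h\otimes\chi_e)=0$ for all $h\in\mathcal{H}$. Next I would use that $M$ commutes with the right regular action $\rho(g)$ on $\ell^2(\mathbb{F}^d)$ (the operator $\mathrm{Id}\otimes\rho(g)$), since $\lambda$ and $\rho$ commute. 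Applying $\rho$, the vanishing holds on $h\otimes\chi_g$ for every $g$, hence on a dense set. So $\sqrt{f}(M)M^{1/2}=0$, which means $f(x)x=0$ on $\mathrm{Sp}(M)$ and contradicts $f(x_0)x_0>0$. Faithfulness in this sense is what I expect to need care: it requires the commutation with the right regular representation and the density of the vectors $h\otimes\chi_g$.
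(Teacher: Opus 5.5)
Your proof is correct and follows the paper's overall structure: you expand $\tau$ over the vectors $e_i\otimes\chi_e$, bound each term, sum over an eigenbasis of $B$, then use Weierstrass and Riesz, and prove faithfulness via the right regular action commuting with $\lambda$. Your support argument is the paper's, with $\sqrt{f}(M)M^{1/2}$ for continuous $f$ playing the role of the paper's $\varphi(M)=M\psi(M)$, which it approximates by $MP_n(M)$.

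The genuine difference is the core estimate. The paper uses plain Cauchy--Schwarz, $|\langle P(M)v_i,Mv_i\rangle|\le\Vert P(M)\Vert\,\Vert Mv_i\Vert$, which needs only self-adjointness. You instead split $M=M^{1/2}M^{1/2}$ and use positivity to get
$$|\tau(MP(M))|\le\sup_{\mathrm{Sp}(M)}|P|\sum_i\langle Mv_i,v_i\rangle=\sup_{\mathrm{Sp}(M)}|P|\,\mathrm{Tr}(A_e),$$
and only then weaken this to $\mathrm{Tr}\sqrt{B}$. That intermediate bound is sharper: it is attained at $P\equiv 1$, since $\mathrm{Tr}(A_e)$ is the total mass of $\mu_{\mathscr{X}}$. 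It is also finite for every trace-class $A_e$. So it already gives the continuity needed for the Riesz step. It also justifies extending $\tau(Mf(M))=\sum_i\langle f(M)M^{1/2}v_i,M^{1/2}v_i\rangle$ to continuous $f$, which your support argument uses implicitly.

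This matters because you justify $\mathrm{Tr}\sqrt{B}<\infty$ by appeal to exponential classes, a hypothesis the proposition does not make. Finiteness holds for arbitrary trace-class coefficients. The paper gets it from Ky Fan's inequality. Alternatively, $\sqrt{B}=|C|$ for the column operator $C:h\mapsto(A_\gamma h)_\gamma$, so $\mathrm{Tr}\sqrt{B}=\Vert C\Vert_1\le\sum_\gamma\Vert A_\gamma\Vert_1$. The paper needs this quantitative form later, in Proposition \ref{tracialest2}. Your argument for the statement itself does not depend on it once you keep the $\mathrm{Tr}(A_e)$ bound.
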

{\it Proof}. We first need to justify that $\sqrt{B}$ is indeed trace class. Let $r$ denote the cardinality of the support of $\gamma \mapsto A_\gamma$. 
By Fan inequality, see Simon \cite{BSimon}, Theorem 1.7, we have
$$\mu_{rj+1}(B)\leq \sum_\gamma \mu_j(A_\gamma^*A_\gamma)=\sum_\gamma \mu_j^2(A_\gamma).$$
Since $B$ is self-adjoint positive we have actually
$$\lambda_{rj+1}(B)=\mu_{rj+1}(B)\leq  \sum_\gamma \mu_j^2(A_\gamma)\leq \left ( \sum_\gamma \mu_j(A_\gamma) \right)^2.$$
By spectral mapping and Lidskii's theorem, we have therefore
$$\mathrm{Tr}(\sqrt{B})=\sum_\ell \sqrt{\lambda_\ell(B)}, $$
which implies (recall that the sequence of eigenvalues $(\lambda_\ell)_\ell$ is decreasing) 
$$\sqrt{\lambda_\ell}\leq \sqrt{\lambda_{r[\ell/r]+1}}\leq  \ \sum_\gamma \mu_{[\ell/r]}(A_\gamma) $$
and therefore $\sum_\ell \sqrt{\lambda_\ell(B)}<\infty$ since each series $\sum_j \mu_j(A_\gamma)$ is finite.
Note that we have actually obtained the estimate
\begin{equation}
\label{sqrest}
\mathrm{Tr}(\sqrt{B})\leq \sum_\gamma \Vert A_\gamma\Vert_1
\end{equation}
which will be helpful further on.
We now refocus on the calculation of $\tau( \lambda(\mathscr{X}) P( \lambda(\mathscr{X})))$, trying to mimic the finite-dimensional case.
Let $P\in \R[X]$, we have 
\begin{equation}
\label{stateform1}
\tau(\lambda(\mathscr{X}) P( \lambda(\mathscr{X})))=
\sum_{i=0}^\infty \langle \lambda(\mathscr{X}) P( \lambda(\mathscr{X}))e_i\otimes \chi_e, e_i\otimes \chi_e \rangle_{\mathcal{H}\otimes \ell^2(\mathbb{F}^d)},
\end{equation}
for any Hilbert basis $(e_i)_{i\in \N}$ of $\mathcal{H}$. Therefore by Cauchy-Schwarz we get
$$\vert  \tau(\lambda(\mathscr{X}) P( \lambda(\mathscr{X})))\vert \leq \Vert P(\lambda(\mathscr{X}))\Vert 
\sum_{i=0}^\infty \Vert \lambda(\mathscr{X})) e_i\otimes \chi_e \Vert$$
Since $\lambda(\mathscr{X}))$ is a bounded self-adjoint operator, we have actually by the spectral mapping theorem
$$\Vert P(\lambda(\mathscr{X}))\Vert =\sup_{z\in \mathrm{Sp}(\lambda(\mathscr{X})))} \vert P(z)\vert.$$
We also observe that
$$\sum_{i=0}^\infty \Vert \lambda(\mathscr{X})) e_i\otimes \chi_e \Vert=
\sum_i \left (\langle \lambda^*(\mathscr{X})) \lambda(\mathscr{X})) e_i\otimes \chi_e, e_i\otimes \chi_e \rangle \right )^{1/2}$$
$$=\sum_i \left (\sum_\gamma \langle A_\gamma^*A_\gamma e_i, e_i\rangle_{\mathcal{H}} \right )^{1/2}=\sum_i \left (\langle B e_i, e_i\rangle_{\mathcal{H}} \right )^{1/2}$$
But $B$ is self-adjoint positive, trace class, and thus by the above discussion we have exactly (by taking $(e_i)$ to be an orthonormal basis diagonalizing $B$ )
$$\sum_i \left (\langle B e_i, e_i\rangle_{\mathcal{H}} \right )^{1/2}=\mathrm{Tr}(\sqrt{B}).$$
We have obtained the inequality
 $$\vert  \tau( \lambda(\mathscr{X}) P( \lambda(\mathscr{X})))\vert\leq \mathrm{Tr} \left (\sqrt{B}\right) \sup_{z\in \mathrm{Sp}(\lambda(\mathscr{X}))} \vert P(z)\vert,$$
 The linear functional
 $$P\mapsto  \tau( \lambda(\mathscr{X}) P( \lambda(\mathscr{X})))$$
 is therefore continous on $\R[X]\cap C^0(\mathrm{Sp}(\lambda(\mathscr{X})))$, and extends by Stone-Weierstrass to a continous linear functional on $C^0(\mathrm{Sp}(\lambda(\mathscr{X})))$.
 By Riesz theorem, there exists a finite (a priori signed) Radon measure $\mu_{\mathscr{X}}$, 
 supported inside $\mathrm{Sp}(\lambda(\mathscr{X}))$, such that for all continuous function $\varphi$
 $$\tau( \lambda(\mathscr{X}) \varphi( \lambda(\mathscr{X})))=\int \varphi d \mu_{\mathscr{X}}.$$
 We also remark that if $P\in \R[X]$ is such that $XP(X)$ is positive on $\mathrm{Sp}(\lambda(\mathscr{X}))$, then the operator
 $\lambda(\mathscr{X}) P( \lambda(\mathscr{X})) $ is also positive and thus by (\ref{stateform1})
 $$\int P d \mu_{\mathscr{X}}\geq 0.$$
 Since we have assumed that $\lambda(\mathscr{X})$ is a positive operator, by a easy density argument, $\mu_{\mathscr{X}}$ is therefore a positive measure.
 It remains to prove that the support of $\mu_{\mathscr{X}}$ contains $\mathrm{Sp}(\lambda(\mathscr{X}))\setminus \{0\}$. We  assume $\lambda(\mathscr{X})\geq 0$. Let $x_0\in \mathrm{Sp}(\lambda(\mathscr{X}))$ with $x_0>0$.
 Assume that there exists $\epsilon>0$ such that $0<x_0-\epsilon$ and 
 $$\mu_{\mathscr{X}}((x_0-\epsilon,x_0+\epsilon))=0.$$
 We pick $\varphi \in C_0^\infty(\R)$ to be a non-negative function such that
 $\varphi(x_0)=1$, and $\varphi \leq 1$ with $\mathrm{Supp}(\varphi)\subset (x_0-\epsilon,x_0+\epsilon)$.
 Write $\varphi(x)=x\psi(x)$, where $\psi$ is also obviously smooth. Note that $\varphi(\lambda(\mathscr{X})))$, defined by functional calculus, is a non zero operator since by spectral mapping and self-adjointness
 $\Vert  \varphi(\lambda(\mathscr{X})))\Vert\geq 1$.
 On the other hand we have
 $$\tau( \varphi^2(\lambda(\mathscr{X})))=\tau(\lambda^2(\mathscr{X})\psi^2(\lambda(\mathscr{X})))=\int u\psi^2(u)d\mu_{\mathscr{X}}(u) $$
 $$=\int \varphi^2(u)\frac{d\mu_{\mathscr{X}}(u)}{u}\leq \int_{(x_0-\epsilon,x_0+\epsilon)}\frac{d\mu_{\mathscr{X}}(u)}{u}=0.$$
 To get a contradiction, we need to show that $\tau( \varphi^2(\lambda(\mathscr{X})))$ is indeed positive. Note that $\varphi(\lambda(\mathscr{X}))$ is a priori no longer in
 $S_1(\mathcal{H})[\mathbb{F}^d]$ but rather is a limit in operator norm of a sequence of self-adjoint operators
 $$C_n=\lambda(\mathscr{X})P_n(\lambda(\mathscr{X})),$$
 where $P_n$ are real polynomials. We have therefore 
 $$0= \tau( \varphi^2(\lambda(\mathscr{X})))=\lim_{n\rightarrow \infty} \sum_i \Vert \lambda(\mathscr{X})P_n(\lambda(\mathscr{X}))e_i\otimes \chi_e \Vert^2,$$
 in particular we get for all $i\in \N$,
 $$ \varphi(\lambda(\mathscr{X}))(e_i\otimes \chi_e) =0.$$
 The end of the proof is then a standard trick, see for example in \cite{NicaSpeicher}, Chapter 3. 
 Let $\delta_h:\ell^2(\mathbb{F}^d)\rightarrow \ell^2(\mathbb{F}^d)$, with $h\in \mathbb{F}^d$ be defined by
 $$\delta_h(f)(g):=f(gh^{-1}),$$
 then for any $\gamma,h\in \mathbb{F}^d$, $\lambda(\gamma)$ and $\delta_h$ commute. As a consequence $Id\otimes \delta_h$ commutes with any element of the type $\lambda(\mathscr{Y})$ with $\mathscr{Y}=\in S_1(\mathcal{H})[\mathbb{F}^d]$. As a limit in operator norm of such elements, $ \varphi(\lambda(\mathscr{X}))$ must commute with $Id\otimes \delta_h$ also.
 Therefore we have for any $h$,
 $$ \varphi(\lambda(\mathscr{X}))(e_i\otimes \chi_h)=\varphi(\lambda(\mathscr{X}))(Id\otimes \delta_h)(e_i\otimes \chi_e)=(Id\otimes \delta_h)\varphi(\lambda(\mathscr{X}))(e_i\otimes \chi_e)=0,$$
 and hence $\varphi(\lambda(\mathscr{X}))$ must be $0$ since it is vanishing on a Hilbert basis of $\mathcal{H}\otimes \ell^2(\mathbb{F}^d)$.
 A contradiction. We have therefore shown that $$\mathrm{Sp}(\lambda(\mathscr{X}))\setminus \{0\}\subset \mathrm{supp}(\mu_{\mathscr{X}})\subset \mathrm{Sp}(\lambda(\mathscr{X})).$$ 
The proof is done. $\square$

Note that if we know that $0$ is not isolated in the spectrum of $\lambda(\mathscr{X})$, then we have actually $\mathrm{supp}(\mu_{\mathscr{X}})=\mathrm{Sp}(\lambda(\mathscr{X}))$.
Under the above hypotheses, we always have $0=\inf \mathrm{Sp}(\lambda(\mathscr{X}))$. Indeed, we have by direct computation
$$\langle \lambda(\mathscr{X})f\otimes \chi_g,f\otimes \chi_g \rangle=\langle A_e f,f\rangle_{\mathcal{H}},$$
and since $A_e$ is assumed to be a self-adjoint positive compact operator, $0$ is in its spectrum which implies
$$\inf_{f\in \mathcal{H}, \atop \Vert f\Vert=1} \langle A_e f,f\rangle_{\mathcal{H}}=0.$$
However at this level of generality, we cannot rule out the fact that $0$ could be  an isolated point of the spectrum of $\lambda(\mathscr{X})$: for example if $\lambda(\mathscr{X})=A_e\otimes Id$, 
with $A$ a finite rank operator, then this is the case.

We also point out to the reader that the "tracial state" $\tau$, unlike in the finite dimensional case of $\mathcal{M}_D(\C)[\mathbb{F}^d]$, does not make sense in the operator norm closure of
$S_1(\mathcal{H})[\mathbb{F}^d]$, simply because operator norm limits of trace class operators are in general not trace class. 

We end this subsection by a key inequality.
\begin{propo}
\label{tracialest2}
 Let $\mathscr{X},\mathscr{Y}\in S_1(\mathcal{H})[\mathbb{F}^d]$, with 
 $$\mathscr{X}=\sum_\gamma A_\gamma.\gamma,\  \mathscr{Y}=\sum_\gamma C_\gamma.\gamma.$$
 Let $\psi \in C_0^\infty(\R)$, then there exists $C_\psi>0$ such that
 $$\vert  \tau\left \{ \lambda(\mathscr{X}) \psi( \lambda(\mathscr{X}))\right \}-\tau\left \{ \lambda(\mathscr{Y}) \psi( \lambda(\mathscr{Y}))\right \}\vert \leq $$
 $$\mathrm{Tr}(\sqrt{B})C_\psi \Vert \lambda(\mathscr{X})-\lambda(\mathscr{Y}) \Vert+\left( \sup_{x\in \R} \vert \psi(x) \vert \right)\sum_\gamma \Vert A_\gamma-C_\gamma \Vert_1,$$
 where as before, $B=\sum_\gamma A_\gamma^* A_\gamma$, and $C_\psi$ is a constant involving only $\psi$, see below.
\end{propo}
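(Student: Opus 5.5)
Split the difference by adding and subtracting $\tau\{\lambda(\mathscr{X})\psi(\lambda(\mathscr{Y}))\}$:
$$\tau\{\lambda(\mathscr{X})\psi(\lambda(\mathscr{X}))\}-\tau\{\lambda(\mathscr{Y})\psi(\lambda(\mathscr{Y}))\}
=\tau\{\lambda(\mathscr{X})[\psi(\lambda(\mathscr{X}))-\psi(\lambda(\mathscr{Y}))]\}+\tau\{\lambda(\mathscr{X}-\mathscr{Y})\psi(\lambda(\mathscr{Y}))\}.$$
We assume, as in the intended application, that $\mathscr{X}$ and $\mathscr{Y}$ are self-adjoint, so that $\psi(\lambda(\cdot))$ is defined by functional calculus. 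The expression $\tau(\lambda(\mathscr{Z})T)$, for $\mathscr{Z}\in S_1(\mathcal{H})[\mathbb{F}^d]$ and $T$ a bounded operator (for instance $T=\psi(\lambda(\mathscr{X}))$), is interpreted through formula (\ref{stateform1}):
$$\tau(\lambda(\mathscr{Z})T):=\sum_i\langle T e_i\otimes\chi_e,\lambda(\mathscr{Z})^*e_i\otimes\chi_e\rangle .$$
This series converges absolutely, as shown below. The interpretation agrees with the earlier one when $T$ is a polynomial in $\lambda(\mathscr{X})$. One also checks, using the identification of $\tau$ with the trace in the polynomial case and approximating $\psi$ uniformly by polynomials on a compact interval containing both spectra, that the quantities in the statement are given by this formula.

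\textbf{First term.} Apply Cauchy--Schwarz exactly as in the proof of Proposition \ref{tracial}:
$$\vert\tau\{\lambda(\mathscr{X})[\psi(\lambda(\mathscr{X}))-\psi(\lambda(\mathscr{Y}))]\}\vert\le \Vert\psi(\lambda(\mathscr{X}))-\psi(\lambda(\mathscr{Y}))\Vert\sum_i\Vert\lambda(\mathscr{X})^* e_i\otimes\chi_e\Vert .$$
Take $(e_i)$ diagonalizing $B$. Since $\mathscr{X}$ is self-adjoint, $\lambda(\mathscr{X})^*=\lambda(\mathscr{X})$, and the sum equals $\mathrm{Tr}(\sqrt B)$ as computed there. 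It remains to bound $\Vert\psi(S)-\psi(T)\Vert\le C_\psi\Vert S-T\Vert$ for bounded self-adjoint $S,T$. Use the Fourier representation
$$\psi(S)=\frac{1}{2\pi}\int\hat\psi(\xi)e^{i\xi S}\,d\xi$$
together with the Duhamel formula
$$e^{i\xi S}-e^{i\xi T}=i\xi\int_0^1 e^{is\xi S}(S-T)e^{i(1-s)\xi T}\,ds,$$
which gives $\Vert e^{i\xi S}-e^{i\xi T}\Vert\le\vert\xi\vert\Vert S-T\Vert$. Hence one may take
$$C_\psi=\frac{1}{2\pi}\int\vert\xi\vert\vert\hat\psi(\xi)\vert\,d\xi<\infty,$$
which is finite because $\psi\in C_0^\infty(\R)$.

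\textbf{Second term.} Write $\mathscr{X}-\mathscr{Y}=\sum_\gamma D_\gamma.\gamma$ with $D_\gamma=A_\gamma-C_\gamma$. Linearity gives
$$\tau\{\lambda(\mathscr{X}-\mathscr{Y})\psi(\lambda(\mathscr{Y}))\}=\sum_\gamma\sum_i\langle (D_\gamma\otimes\lambda(\gamma))\psi(\lambda(\mathscr{Y}))e_i\otimes\chi_e,e_i\otimes\chi_e\rangle .$$
For each $\gamma$, use the polar/singular value decomposition $D_\gamma=\sum_k s_k\,\langle\cdot,f_k\rangle g_k$ with $\sum_k s_k=\Vert D_\gamma\Vert_1$. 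The inner sum over $i$ is basis independent; it is the trace of $(D_\gamma\otimes\lambda(\gamma))\psi(\lambda(\mathscr{Y}))$ compressed to $\mathcal{H}\otimes\chi_e$. It equals
$$\sum_k s_k\langle \psi(\lambda(\mathscr{Y}))g'\,,\,f_k\otimes\chi_{\gamma^{-1}}\rangle$$
for a suitable unit vector $g'$ (namely $g'=g_k\otimes\chi_e$ after moving $\lambda(\gamma)$ across). Each term is bounded by $s_k\Vert\psi(\lambda(\mathscr{Y}))\Vert\le s_k\sup\vert\psi\vert$, by the spectral theorem. Summing over $k$ and $\gamma$ gives $\sup\vert\psi\vert\sum_\gamma\Vert A_\gamma-C_\gamma\Vert_1$.

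\textbf{Main obstacle.} The delicate point is rigor in the definition and manipulation of $\tau$ when non-polynomial functions appear, i.e. justifying that $\tau(\lambda(\mathscr{Z})T)$ is well defined for arbitrary bounded $T$, linear in $\mathscr{Z}$, and bounded as above. The plan is to take (\ref{stateform1}) as the definition and prove absolute convergence via the trace class bound (\ref{sqrest}). This bound is also what makes the basis-independent computation in the second term legitimate.
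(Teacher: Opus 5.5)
Your proof is correct. Its skeleton is the paper's: the same add-and-subtract splitting, and the same Cauchy--Schwarz bound $\mathrm{Tr}(\sqrt B)\,\Vert\psi(\lambda(\mathscr{X}))-\psi(\lambda(\mathscr{Y}))\Vert$ for the first term using a basis diagonalizing $B$. You depart from the paper in three places.

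First, the operator-Lipschitz estimate. The paper uses the Helffer--Sj\"ostrand formula (almost-analytic extension plus the resolvent identity) and gets $C_\psi$ proportional to $\Vert\psi\Vert_{C^3([-K,K])}$, as in Lemma \ref{Lipschitzbound}. Your Fourier/Duhamel argument is shorter and more elementary. It gives $C_\psi=\frac{1}{2\pi}\int\vert\xi\vert\vert\hat\psi(\xi)\vert\,d\xi$, which by Cauchy--Schwarz and Plancherel is $O(\sqrt{K}\,\Vert\psi\Vert_{C^2([-K,K])})$ when $\mathrm{supp}\,\psi\subset[-K,K]$. So your bound would serve equally well in Theorem \ref{main1}, where derivative norms of $\psi$ must be tracked; the Helffer--Sj\"ostrand route just delivers that form directly.

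Second, the term $\tau\{\lambda(\mathscr{X}-\mathscr{Y})\psi(\lambda(\mathscr{Y}))\}$. The paper again uses Cauchy--Schwarz followed by (\ref{sqrest}). You instead use trace duality $\vert\mathrm{Tr}(DM)\vert\le\Vert D\Vert_1\Vert M\Vert$ through the singular value decomposition, which gives exactly the stated constant $\sup\vert\psi\vert\sum_\gamma\Vert A_\gamma-C_\gamma\Vert_1$ with no further input. This is a genuine advantage. The Fan-inequality argument offered for (\ref{sqrest}) in the proof of Proposition \ref{tracial}, once summed over $\ell$, only yields $r\sum_\gamma\Vert A_\gamma\Vert_1$, because each value of $[\ell/r]$ occurs $r$ times. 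The inequality (\ref{sqrest}) itself is true: $\sqrt{B}=\vert V\vert$ for the column operator $V=(A_\gamma)_\gamma$, and the triangle inequality in trace norm applies to $V$.

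Third, you define $\tau(\lambda(\mathscr{Z})T)$ for an arbitrary bounded $T$ as the trace of a trace-class compression to $\mathcal{H}\otimes\chi_e$. The paper instead proves the polynomial case and extends by a brief density remark. Your definition makes basis independence and operator-norm continuity in $T$ explicit, which the paper handles only in passing.
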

\noindent{\it Proof}. Assume first that $\psi$ is a real polynomial. We write
$$ \tau\left \{ \lambda(\mathscr{X}) \psi( \lambda(\mathscr{X}))\right \}-\tau\left \{ \lambda(\mathscr{Y}) \psi( \lambda(\mathscr{Y}))\right \}= $$
$$\tau\left \{ (\psi(\lambda(\mathscr{X}))-\psi(\lambda(\mathscr{Y})))\lambda(\mathscr{X})\right \} +\tau \left \{(\psi(\lambda(\mathscr{Y}))(\lambda(\mathscr{X}))-\lambda(\mathscr{Y})) ) \right \}.$$
Proceeding as in the preceding proof, we get
$$\vert \tau\left \{ (\psi(\lambda(\mathscr{X}))-\psi(\lambda(\mathscr{Y})))\lambda(\mathscr{X})\right \} \vert \leq \Vert \psi(\lambda(\mathscr{X}))-\psi(\lambda(\mathscr{Y}))\Vert \mathrm{Tr}(\sqrt{B}),$$
while by (\ref{sqrest}) we obtain
 $$ \vert \tau \left \{(\psi(\lambda(\mathscr{Y}))(\lambda(\mathscr{X}))-\lambda(\mathscr{Y})) ) \right \} \vert \leq \left( \sup_{x\in \R} \vert \psi(x) \vert \right)\sum_\gamma \Vert A_\gamma-C_\gamma \Vert_1.$$
 The inequality extends by density of polynomials in $C^\infty_0(\R)$ (endowed with the Frechet topology) to any test function $\psi$. What remains to be controled is the term
 $$\Vert \psi(\lambda(\mathscr{X}))-\psi(\lambda(\mathscr{Y})) \Vert. $$
 There are several possible ways to bound it via the operator norm $\Vert \lambda(\mathscr{X})-\lambda(\mathscr{Y}) \Vert$, but none are straightforward. We give an outline below based on the
 Helffer-Sj\"ostrand formula from \cite{HS1,Davies}. In what follows we denote for all $x\in \R$, $\langle x\rangle:=\sqrt{1+x^2}$. Let $g_0 \in C_0^\infty(\R)$
 be such that $1\geq g_0\geq 0$, $g(x)\equiv 1$ if $\vert x\vert\leq 1$ and $\mathrm{supp}(g_0)\subset [-2,+2]$, and set 
 $$\sigma(x,y):=g_0\left ( \frac{y}{\langle x\rangle}\right).$$
 We assume that $\mathrm{supp}(\psi)\subset[-K,+K]$, with $K\geq 1$. We define an almost-analytic extension $\widetilde{\psi}$ of $\psi$ to $\C$ as follows (here $n$ is arbitrary). Set for all $z=x+iy\in \C$,
 $$\widetilde{\psi}(z):=\left ( \sum_{k=0}^n \frac{\psi^{(k)}(x)(iy)^k}{k!}\right)\sigma(x,y). $$
 Recall that we have 
 $$\frac{\partial }{\partial \overline{z}}=\frac{1}{2}\left ( \frac{\partial}{\partial x}+i  \frac{\partial}{\partial y}\right).$$
 A direct calculation gives 
 $$\frac{\partial \widetilde{\psi}}{\partial \overline{z}}=\frac{1}{2}\frac{\psi^{(n+1)}(x)(iy)^n}{n!}\sigma(x,y)+ \left ( \sum_{k=0}^n \frac{\psi^{(k)}(x)(iy)^k}{k!}\right) \frac{\partial \sigma}{\partial \overline{z}}
 (x,y).$$
 Set $$U_K:=\{z=x+iy\in \C\ :\ \vert x \vert \leq K\ \mathrm{and}\ \vert y \vert \leq 2\langle x\rangle\}$$
 and
  $$W_K:=\{z=x+iy\in \C\ :\ \vert x \vert \leq K\ \mathrm{and}\ \langle x \rangle \leq \vert y \vert \leq 2\langle x\rangle\}.$$
  Then we have
  $$\mathrm{supp}\left(\frac{\partial \widetilde{\psi}}{\partial \overline{z}}\right)\subset U_K,\ \mathrm{supp}\left(\frac{\partial \sigma}{\partial \overline{z}}\right)\subset W_K,$$
  and for all $z\in U_K$ we have the estimate
  $$\left \vert  \frac{\partial \widetilde{\psi}}{\partial \overline{z}}(z)\right \vert\leq \vert y \vert^n\sup_{\R}\vert \psi^{(n+1)}\vert +
  C_\sigma \left ( \sum_{k=0}^n \sup_{\R}\vert \psi^{(k)}\vert \right)(2\langle K \rangle)^n\chi_{W_K}(z), $$
  where we have set $C_\sigma:=\sup_{U_K} \left \vert  \frac{\partial \sigma}{\partial \overline{z}}\right \vert$. Setting $n=2$ and onserving that whenever $z\in W_K$ we have $\vert \Im(z)\vert \geq 1$, we get  
  \begin{equation}
  \label{Cpsiest}
  \int_{\C} \left\vert \frac{\partial \widetilde{\psi}}{\partial \overline{z}}\right\vert \vert \Im(z)\vert^{-2}dm(z) 
  \leq \sup_{\R} \vert \psi^{(3)}\vert \mathrm{Vol}(U_K) +C_\sigma\left ( \sum_{k=0}^2 \sup_{\R}\vert \psi^{(k)}\vert \right)(2\langle K \rangle)^2 \mathrm{Vol}(W_K).
  \end{equation}
  $$\leq M(\sigma,K)\Vert \psi \Vert_{C^3([-K,+K])},$$
  where $M(\sigma,K)>0$ depends only on $K$ and the choice of $\sigma$ i.e. $g_0$ and 
  $$\Vert \psi \Vert_{C^3([-K,+K])}=\max_{0\leq j\leq 3} \sup_{[-K,+K]}\vert \psi^{(j)}\vert.$$
  Let $T:\mathscr{H}\rightarrow \mathscr{H}$ be a bounded self-adjoint operator acting on a complex separable Hilbert space $\mathscr{H}$. Helffer-Sj\"ostrand formula, see \cite{Davies} chapter 2, says that we have for all $n\geq 1$, 
  $$\psi(T)=\frac{-1}{\pi}  \int_{\C} \frac{\partial \widetilde{\psi}}{\partial \overline{z}}R_T(z)dm(z),$$
  where $R_T(z):=(zId-T)^{-1}$ is the resolvent, which is analytic in $\C\setminus \mathrm{Sp}(T)$.
  As a consequence, if $T_1,T_2$ are bounded self-adjont operators we have by the resolvent formula
  $$\psi(T_1)-\psi(T_2)=\frac{-1}{\pi}  \int_{\C} \frac{\partial \widetilde{\psi}}{\partial \overline{z}}R_{T_1}(z)(T_1-T_2)R_{T_2}(z)dm(z)$$
  and thus using the fact that $\Vert R_T(z)\Vert\leq \vert \Im(z)\vert^{-1}$ for all $z\not \in \R$, we have
  $$\Vert \psi(T_1)-\psi(T_2)\Vert\leq \frac{1}{\pi}  \int_{\C} \left \vert \frac{\partial \widetilde{\psi}}{\partial \overline{z}}\right\vert \vert \Im(z)\vert^{-2}\Vert T_1-T_2\Vert dm(z).$$
  Inequality (\ref{Cpsiest}) gives 
  $$\Vert \psi(T_1)-\psi(T_2)\Vert\leq C_\psi \Vert T_1-T_2\Vert,$$
  with
  $$C_\psi:=\frac{1}{\pi}M(\sigma,K)\Vert \psi \Vert_{C^3([-K,+K])}.$$
  We record the above discussion in the following Lemma, to be used again in the next subsection.
  \begin{lem}
   \label{Lipschitzbound}
    Let $\psi\in C_0^\infty([-K,+K])$,
   then there exists $M_0(K)>0$, such that for all $T_1,T_2:\mathscr{H}\rightarrow \mathscr{H}$ bounded self-adjoints operators, we have in operator norm,
   $$\Vert \psi(T_1)-\psi(T_2)\Vert\leq M_0(K)\Vert \psi \Vert_{C^3([-K,+K])}\Vert T_1-T_2\Vert.$$
  \end{lem}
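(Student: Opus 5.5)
The plan is to use the Helffer--Sj\"ostrand representation of $\psi(T)$, exactly as in the discussion preceding the statement, and make the constant explicit. Fix $g_0$ once and for all (smooth, $0\le g_0\le 1$, $g_0\equiv 1$ on $[-1,1]$, $\mathrm{supp}(g_0)\subset[-2,2]$). Set $\sigma(x,y)=g_0(y/\langle x\rangle)$ and take the almost-analytic extension of order $n=2$,
$$\widetilde{\psi}(x+iy)=\Big(\psi(x)+\psi'(x)iy-\tfrac12\psi''(x)y^2\Big)\sigma(x,y).$$
The first step is to recall the Helffer--Sj\"ostrand formula $\psi(T)=-\frac1\pi\int_\C \partial_{\bar z}\widetilde\psi\, R_T(z)\,dm(z)$ for bounded self-adjoint $T$ (Davies, Chapter 2). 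Subtracting the formulas for $T_1,T_2$ and using the second resolvent identity $R_{T_1}(z)-R_{T_2}(z)=R_{T_1}(z)(T_1-T_2)R_{T_2}(z)$ gives
$$\Vert \psi(T_1)-\psi(T_2)\Vert\le \frac1\pi\Vert T_1-T_2\Vert\int_\C|\partial_{\bar z}\widetilde\psi(z)|\,|\Im z|^{-2}\,dm(z),$$
because $\Vert R_{T_i}(z)\Vert\le|\Im z|^{-1}$ off the real axis. Nothing about the operators enters besides self-adjointness, so the resulting constant is uniform in $T_1,T_2$.

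The main step, and the only real point, is to show that this integral converges and is bounded by $M(K)\Vert\psi\Vert_{C^3([-K,K])}$. I expect the singularity of $|\Im z|^{-2}$ at the real axis to be the main obstacle. The way around it is the choice $n=2$. Computing $\partial_{\bar z}\widetilde\psi$, the Taylor terms telescope and leave
$$\partial_{\bar z}\widetilde\psi=\tfrac12\psi^{(3)}(x)\frac{(iy)^2}{2}\sigma+\Big(\sum_{k\le2}\psi^{(k)}(x)\frac{(iy)^k}{k!}\Big)\partial_{\bar z}\sigma.$$
\begin{itemize}
\item The first term is $O(|y|^2\sup|\psi^{(3)}|)$, which exactly cancels $|\Im z|^{-2}$. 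Its integrand is therefore bounded on the compact set $U_K=\{|x|\le K,\ |y|\le 2\langle x\rangle\}$, which contains its support since $\psi$ vanishes for $|x|>K$.
\item The second term is supported where $\partial_{\bar z}\sigma\neq0$, namely $W_K=\{|x|\le K,\ \langle x\rangle\le|y|\le2\langle x\rangle\}$. There $|y|\ge1$, so $|\Im z|^{-2}\le1$. This term is bounded by $C_\sigma(2\langle K\rangle)^2\sum_{k\le2}\sup|\psi^{(k)}|$, where $C_\sigma=\sup|\partial_{\bar z}\sigma|$ is finite because $\partial_x\sigma$ and $\partial_y\sigma$ are bounded: $\partial_y\sigma=g_0'(\cdot)/\langle x\rangle$, and $\partial_x\sigma$ involves $g_0'(\cdot)\,y\,x/\langle x\rangle^3$ with $|y|\le 2\langle x\rangle$.
\end{itemize}

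Integrating both bounds over the finite-area sets $U_K$ and $W_K$ yields exactly inequality (\ref{Cpsiest}). So $M_0(K):=\frac1\pi M(\sigma,K)$ works, with $M(\sigma,K)$ depending only on $K$ and the fixed $g_0$.

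A final check is that the Helffer--Sj\"ostrand formula applies with this finite-order extension. Here $\widetilde\psi$ is compactly supported and $C^1$, with $|\partial_{\bar z}\widetilde\psi|=O(|y|^2)$ near $\R$, so the integral converges absolutely in operator norm. The formula then holds for $n\ge1$ as stated in Davies.
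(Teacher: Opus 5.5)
Your proposal is correct and follows the paper's proof essentially verbatim. Both use the Helffer--Sj\"ostrand formula with the order-$2$ almost-analytic extension built from $\sigma(x,y)=g_0(y/\langle x\rangle)$, the second resolvent identity together with $\Vert R_T(z)\Vert\le|\Im z|^{-1}$, and the split of $\partial_{\bar z}\widetilde\psi$ into an $O(|y|^2)$ term on $U_K$ and a term supported on $W_K$ (where $|\Im z|\ge 1$), which yields (\ref{Cpsiest}); your extra checks on the boundedness of $\partial_{\bar z}\sigma$ and on using a finite-order extension correctly fill in details the paper leaves implicit.
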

  For a more sophisticated proof based on the notion of "operator differentiable functions", we refer the reader to \cite{Pedersen}. Unlike what a naive approach would suggest, $C^1$-regularity of $\psi$ is not enough to guarantee that such an estimate holds, see in \cite{Pedersen} for counter examples, while  $C^2$ is enough. In our soft proof above, we essentially need $\psi$ to be $C^3$ which is non-optimal but enough for our purpose. The end of the proof of Proposition \ref{tracialest2} follows by applying directly the above bound to $T_1=\lambda(\mathscr{X})$ and $T_2=\lambda(\mathscr{Y}).$
 \subsection{Extending the polynomial method to $S_1(\mathcal{H})[\mathbb{F}^d]$}
 We start by another useful Lemma.
 \begin{lem}
 \label{tracenormbound}
  Let $\mathscr{X}\in S_1(\mathcal{H})[\mathbb{F}^d]$, say
  $$\mathscr{X}=\sum_\gamma A_\gamma.\gamma,$$
  then we have for all $N$,
  $$\Vert \pi_N(\mathscr{X})\Vert_1\leq N \sum_\gamma \Vert A_\gamma \Vert_1.$$
 \end{lem}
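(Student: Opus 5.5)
The bound follows from the triangle inequality for the trace norm together with the multiplicativity of the trace norm over tensor products. Write
$$\pi_N(\mathscr{X})=\sum_\gamma A_\gamma\otimes \pi_N(\gamma),$$
which is a finite sum acting on $\mathcal{H}\otimes \mathcal{V}_N$. Since $\Vert\cdot\Vert_1$ is a norm on $S_1(\mathcal{H}\otimes\mathcal{V}_N)$, we get
$$\Vert \pi_N(\mathscr{X})\Vert_1\leq \sum_\gamma \Vert A_\gamma\otimes \pi_N(\gamma)\Vert_1 .$$
So it suffices to show that $\Vert A\otimes U\Vert_1\leq N\Vert A\Vert_1$ for each trace class $A$ and each unitary $U$ on $\mathcal{V}_N$.

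For this step, recall that $(A\otimes U)^*(A\otimes U)=A^*A\otimes U^*U=A^*A\otimes Id_{\mathcal{V}_N}$. Taking the positive square root gives
$$\sqrt{(A\otimes U)^*(A\otimes U)}=\sqrt{A^*A}\otimes Id_{\mathcal{V}_N}.$$
Computing the trace in a product basis $e_i\otimes f_k$, where $(e_i)$ is a Hilbert basis of $\mathcal{H}$ and $(f_k)$ is an orthonormal basis of $\mathcal{V}_N$, yields
$$\Vert A\otimes U\Vert_1=\dim(\mathcal{V}_N)\,\mathrm{Tr}(\sqrt{A^*A})=(N-1)\Vert A\Vert_1\leq N\Vert A\Vert_1.$$
Equivalently, the singular values of $A\otimes U$ are those of $A$, each repeated $\dim \mathcal{V}_N$ times. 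Summing over $\gamma$ in the finite support gives the claim. The same argument on the full space $\ell^2(\{1,\ldots,N\})$ gives exactly $N$.

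There is no real obstacle here. The only point to check is that $\pi_N(\gamma)$ is unitary on $\mathcal{V}_N$. This holds because permutation matrices are unitary and preserve the constant vectors, hence also their orthogonal complement $\mathcal{V}_N$. The bound is deterministic and uniform in the choice of $\phi_N$.
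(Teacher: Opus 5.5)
Your proposal is correct and matches the paper's proof: triangle inequality, then $(A\otimes \pi_N(\gamma))^*(A\otimes \pi_N(\gamma))=A^*A\otimes I$, and tracing $\sqrt{A^*A}\otimes I$ in a product basis gives $(N-1)\Vert A\Vert_1$. Your extra remarks, on uniqueness of the positive square root and unitarity of $\pi_N(\gamma)$ on $\mathcal{V}_N$, simply make explicit what the paper leaves implicit.
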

 {\it Proof.} Since we have
 $$\pi_N(\mathscr{X})=\sum_\gamma A_\gamma \otimes \pi_N(\gamma), $$
 it is enough to estimate $\Vert B\otimes \pi_N(\gamma)\Vert_1$ for any $B\in S_1(\mathcal{H})$.
 Picking an orthonormal Hilbert basis $(e_i\otimes f_j)_{i\in \N \atop 1\leq j \leq N-1}$ of $\mathcal{H}\otimes \mathcal{V}_N$,
 we have 
 $$(B\otimes \pi_N(\gamma))^*(B\otimes \pi_N(\gamma))=B^*B\otimes I_{N-1},$$
 and therefore
 $$\Vert B\otimes \pi_N(\gamma)\Vert_1=\sum_{i,j}\langle (\sqrt{B^*B}\otimes I_{N-1} )e_i\otimes f_j, e_i\otimes f_j \rangle_{\mathcal{H}\otimes \mathcal{V}_N}$$
 $$=\sum_{i,j}\langle \sqrt{B^*B}e_i, e_i \rangle\langle f_j,f_j\rangle=(N-1)\Vert B \Vert_1,$$
 and the proof is done. $\square$
 
 We will also use the following easy fact.
 \begin{lem}
 \label{tensornorm1}
  Let $A$ be in $B(\mathcal{H})$, then for all $\gamma \in \mathbb{F}^d$, we have
  $$\Vert A\otimes \pi_N(\gamma)\Vert\leq \Vert A \Vert.$$
  Similarly we have also for all $\gamma$,
  $$\Vert A\otimes \lambda(\gamma)\Vert\leq \Vert A \Vert.$$
   \end{lem}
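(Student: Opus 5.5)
The plan is to reduce both inequalities to the fact that the operators involved are unitary, so that tensoring with them does not increase the norm. The key identity is that for bounded operators $A$ on $\mathcal{H}$ and $U$ on a second Hilbert space $\mathcal{K}$, the operator $A\otimes U$ on $\mathcal{H}\otimes\mathcal{K}$ factors as
$$A\otimes U=(A\otimes Id_{\mathcal{K}})(Id_{\mathcal{H}}\otimes U).$$
This gives $\Vert A\otimes U\Vert\leq \Vert A\otimes Id\Vert\,\Vert Id\otimes U\Vert$. It is therefore enough to show two things: first, that $\Vert Id\otimes U\Vert=1$ whenever $U$ is unitary; second, that $\Vert A\otimes Id\Vert\leq \Vert A\Vert$.

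For the first claim, we note that $\pi_N(\gamma)$ is a permutation matrix restricted to $\mathcal{V}_N$, which is invariant under permutations. It is therefore unitary on $\mathcal{V}_N$. Likewise $\lambda(\gamma)$ is unitary on $\ell^2(\mathbb{F}^d)$, being a bijective relabeling of the coordinates. In either case $Id\otimes U$ maps an orthonormal basis $e_i\otimes f_j$ to the orthonormal family $e_i\otimes Uf_j$. It is thus an isometry, and in fact unitary.

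For the second claim, we fix an orthonormal basis $(f_j)$ of $\mathcal{K}$. This can be $\mathcal{V}_N$ with $N-1$ vectors, or $\ell^2(\mathbb{F}^d)$ with its basis $(\chi_g)$. Any $v\in\mathcal{H}\otimes\mathcal{K}$ can be written uniquely as $v=\sum_j v_j\otimes f_j$ with $\sum_j\Vert v_j\Vert^2=\Vert v\Vert^2$. Then $(A\otimes Id)v=\sum_j Av_j\otimes f_j$. By orthogonality,
$$\Vert (A\otimes Id)v\Vert^2=\sum_j\Vert Av_j\Vert^2\leq\Vert A\Vert^2\Vert v\Vert^2.$$
This orthogonal decomposition is the only step that needs care in the infinite-dimensional case $\mathcal{K}=\ell^2(\mathbb{F}^d)$. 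There we would first check the bound on finite sums, which are dense, and then extend by continuity.

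Combining the two claims gives $\Vert A\otimes\pi_N(\gamma)\Vert\leq\Vert A\Vert$ and $\Vert A\otimes\lambda(\gamma)\Vert\leq\Vert A\Vert$. The argument actually gives equality, although only the inequality is needed. There is no real obstacle here.
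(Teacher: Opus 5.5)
Your proof is correct and is essentially the paper's argument. The paper expands $F=\sum_j f_j\otimes e_j$ in an orthonormal basis of $\mathcal{V}_N$ (resp.\ $\ell^2(\mathbb{F}^d)$) and uses in one step that $(\pi_N(\gamma)e_j)_j$ (resp.\ $(\lambda(\gamma)e_j)_j$) is again orthonormal, giving $\Vert (A\otimes \pi_N(\gamma))F\Vert^2=\sum_j\Vert Af_j\Vert^2\leq \Vert A\Vert^2\Vert F\Vert^2$; you split the same computation through the factorization $A\otimes U=(A\otimes Id)(Id\otimes U)$, which is only a cosmetic difference (and your aside on equality would need $\mathcal{K}\neq\{0\}$, e.g.\ $\mathcal{V}_1=\{0\}$ for $N=1$, but only the inequality is claimed).
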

   {\noindent{\it Proof}. Write for any $F\in \mathcal{H}\otimes \mathcal{V}_N$,
   $$F=\sum_{j=1}^{N-1} f_j\otimes e_j $$
   where $(e_j)$ is an orthonormal basis of $\mathcal{V}_N$ and $\Vert F \Vert^2=\sum_j \Vert f_j\Vert^2_{\mathcal{H}}$.
   By unitarity of $\pi_N(\gamma)$ we have immediately
   $$\Vert A\otimes \pi_N(\gamma) F\Vert^2=\left \Vert \sum_j A(f_j)\otimes \pi_N(\gamma)(e_j) \right \Vert^2=\sum_j \Vert A f_j\Vert^2\leq \Vert A \Vert^2 \Vert F\Vert^2.$$
   When $\mathcal{V}_N$ is replaced by $\ell^2(\mathbb{F}^d)$, we can proceed in the same way, replacing the finite basis of $\mathcal{V}_N$ by a Hilbert basis of $\ell^2(\mathbb{F}^d)$ and using unitarity of $\lambda(\gamma)$.
   The proof is done. $\square$
 
 Our aim is to prove the following fact, from which Theorem \ref{poly1} can be deduced readily by taking $\mathcal{H}=H^2(\Omega_0)$ and observing that $V_N\simeq \mathcal{H}\otimes \mathcal{V}_N$.
 The properties of exponential classes ensure that $\lt_N^*\lt_N$ can indeed be written as $\pi_N(\mathscr{X})$ for some self-adjoint positive element $\mathscr{X}\in S_1(\mathcal{H})[{\mathbb F}^d]$, whose "coefficients" are all in the same exponential class. 
 Applying the next result below combined with Proposition \ref{tracial} ends the proof.
 \begin{thm} 
 \label{main1}
 Assume that $\mathscr{X}=\sum_\gamma A_\gamma.\gamma\in S_1(\mathcal{H})[\mathbb{F}^d]$ is a self-adjoint positive element. Assume in addition that all coefficients $A_\gamma$ belong to some exponential class $E(a_0,\mathcal{H})$, for some $a_0>0$. Then there exists $C_{\mathscr{X}},K_{\mathscr{X}}>0$  independent of $\psi$ such that for all $\psi\in C_0^\infty(\R)$, we have for all $N$ large
 $$\left \vert \frac{\Exp_N\left (\mathrm{Tr}(\pi_N(\mathscr{X})\psi(\pi_N(\mathscr{X})))\right)}{N}-\tau(\lambda(\mathscr{X})\psi(\lambda(\mathscr{X}))) \right \vert\leq C\frac{\log(N)}{N}\Vert \psi \Vert_{C^5([-K,+K])}.$$
 \end{thm}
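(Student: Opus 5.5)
The plan is to reduce to the finite-dimensional polynomial-method estimate of \cite{CGTVH} by approximating each coefficient by a finite rank operator, and to control the approximation error on both sides using the estimates of the previous subsections. The finite-dimensional input I will assume is the following, which is the consequence of the polynomial method we want to extend. For $\mathscr{Y}=\sum_\gamma C_\gamma.\gamma\in\mathcal{M}_D(\C)[\mathbb{F}^d]$ self-adjoint, with $\gamma$ ranging over a fixed finite support $S$ and $\Vert C_\gamma\Vert$ bounded, and $h\in C_0^\infty(\R)$, one has
$$\left\vert \Exp_N\mathrm{Tr}\, h(\pi_N(\mathscr{Y}))-N\tau(h(\lambda(\mathscr{Y})))\right\vert \leq C\,D\,\Vert h\Vert_{C^5}.$$
Here $C$ depends only on $S$ and $\max\Vert C_\gamma\Vert$, and the dependence on $D$ is explicit and at most polynomial (linear is what one expects, since $D$ enters only through the trace on $\C^D$). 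Applying this with $h(x)=x\psi(x)$ gives the needed $C^5$ norm, after adjusting for the extra factor of $x$ on $[-K,K]$.

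First I fix $K$. By Lemma \ref{tensornorm1}, $\Vert\pi_N(\mathscr{X})\Vert,\Vert\lambda(\mathscr{X})\Vert\le\sum_\gamma\Vert A_\gamma\Vert$, and the same bound holds for the approximants. So only $\psi$ on $[-K,K]$ with $K=1+\sum_\gamma\Vert A_\gamma\Vert_1$ matters. Next, by Proposition \ref{Approxtr}, for each $\gamma$ in the support I choose rank-$\le k$ operators $A_{\gamma,k}$ with $\Vert A_\gamma-A_{\gamma,k}\Vert_1\le Ce^{-\widetilde a k}$. I symmetrize by replacing $A_{\gamma,k}$ with $\frac12(A_{\gamma,k}+A_{\gamma^{-1},k}^*)$, so that $\mathscr{X}_k=\sum_\gamma A_{\gamma,k}.\gamma$ is self-adjoint. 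All $A_{\gamma,k}$ then act on a common finite-dimensional subspace $E_k$ of dimension $D\le 2|S|k$: take the span of the ranges and co-ranges, and compress by the orthogonal projection onto $E_k$. Then $\pi_N(\mathscr{X}_k)=\widetilde{\pi}_N(\mathscr{X}_k)\oplus 0$, and since $h(0)=0$, traces of $h(\pi_N(\mathscr{X}_k))$ agree with those computed in $\mathcal{M}_D(\C)[\mathbb{F}^d]$. The same holds for $\tau$.

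The error terms are handled as follows. On the limit side, Proposition \ref{tracialest2} bounds the discrepancy by
$$\mathrm{Tr}(\sqrt{B})\,C_\psi\sum_\gamma\Vert A_\gamma-A_{\gamma,k}\Vert+\sup|\psi|\sum_\gamma\Vert A_\gamma-A_{\gamma,k}\Vert_1 \lesssim e^{-\widetilde a k}\Vert\psi\Vert_{C^3}.$$
On the random side I mimic the same split deterministically:
$$\mathrm{Tr}\big(\pi_N(\mathscr{X})\psi(\pi_N(\mathscr{X}))\big)-\mathrm{Tr}\big(\pi_N(\mathscr{X}_k)\psi(\pi_N(\mathscr{X}_k))\big)
=\mathrm{Tr}\big(\pi_N(\mathscr{X})[\psi(\pi_N(\mathscr{X}))-\psi(\pi_N(\mathscr{X}_k))]\big)+\mathrm{Tr}\big(\pi_N(\mathscr{X}-\mathscr{X}_k)\psi(\pi_N(\mathscr{X}_k))\big).$$
I then use $|\mathrm{Tr}(TA)|\le\Vert T\Vert_1\Vert A\Vert$, together with Lemma \ref{tracenormbound} (giving $\Vert\pi_N(\mathscr{X})\Vert_1\le N\sum\Vert A_\gamma\Vert_1$ and $\Vert\pi_N(\mathscr{X}-\mathscr{X}_k)\Vert_1\le NCe^{-\widetilde ak}$) and Lemma \ref{Lipschitzbound}. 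This gives a bound $\lesssim Ne^{-\widetilde a k}\Vert\psi\Vert_{C^3}$ uniformly in the permutations, so also in expectation.

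Combining, after dividing by $N$ the total error is
$$O\big(e^{-\widetilde a k}\Vert\psi\Vert_{C^3}\big)+O\big(k\Vert\psi\Vert_{C^5}/N\big).$$
Choosing $k=\lceil \widetilde a^{-1}\log N\rceil$ yields the claimed $C\log(N)/N$ bound, with constants depending only on $\mathscr{X}$ through $a_0$, $|S|$ and the trace norms.

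The main obstacle is the finite-dimensional step: I need a version of the \cite{CGTVH} bound whose constant is explicit and polynomial (ideally linear) in the matrix dimension $D$. It must also be uniform over coefficients of bounded norm, since the approximants vary with $k$. My first attempt is to rerun the polynomial method, whose key input is that $\Exp_N\mathrm{Tr}\,P(\pi_N(\mathscr{Y}))$ is a rational function of $1/N$ whose coefficients are bounded via the Markov brothers inequality. There I would track that $D$ enters only linearly through $\mathrm{Tr}_{\C^D}$. I would also check that the compression does not destroy positivity; if it does, it is harmless, since Theorem \ref{main1} as used only needs self-adjointness for the functional calculus.
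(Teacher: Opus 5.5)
Your proposal is correct and follows essentially the same route as the paper. Both arguments use finite-rank approximation of the coefficients via Proposition \ref{Approxtr} with rank $k\sim\log N$, the same split of the error (the random side via Lemma \ref{tracenormbound} and Lemma \ref{Lipschitzbound}, the limit side via Proposition \ref{tracialest2}), and an exact reduction to matrix coefficients on a common subspace of dimension $O(k)$ before applying the finite-dimensional polynomial-method bound. The input you flag as the main obstacle is exactly what the paper imports from \cite{CGTVH}, Corollary 7.2: its constant is linear in $D$ and depends on $\mathscr{Y}$ only through $D$, a uniform norm bound $K$ and the maximal word length $q_0$, so no rerun of the method is needed.
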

 {\it Proof}. In this proof, $C>0$ will denote a constant that does not depend on $m,N,\psi$ but may depend on $\mathscr{X}$. This generic constant $C$ will change from line to line. We start by using Proposition \ref{Approxtr} to approximate each operator $A_\gamma$ by a finite rank operator $A_\gamma^{(m)}$ with $\mathrm{rank}(A_\gamma^{(m)})\leq m$ in such a way that
 $$\max_{\gamma}\Vert A_\gamma -A^{(m)}_\gamma\Vert_1\leq C  e^{-am},$$
 for some $0<a\leq a_0$ which depends obviously on $\mathscr{X}$. We then set
 $$\mathscr{X}^{(m)}:=\sum_\gamma  A^{(m)}_\gamma.\gamma.$$
Note that since for any trace class operator $T$, we have $\Vert T^*\Vert_1=\Vert T\Vert_1$, 
 and using the fact that the adjoint of a finite rank operator is also finite rank with the same rank, we can also assume that each $\mathscr{X}^{(m)}$ is also self-adjoint.

 In practice, we will set $$m=[ \alpha \log(N)]$$ for a suitably chosen $\alpha>0$, see at the end of the proof.   Let $K>0$ be large enough so that for all $N$ and $m$ we have
 $$\Vert \pi_N(\mathscr{X}) \vert\leq K,\  \Vert \pi_N(\mathscr{X}^{(m)}) \vert\leq K,\ \mathrm{and}\ \Vert \lambda(\mathscr{X})\Vert\leq K,\  \Vert \lambda(\mathscr{X}^{(m)})\Vert\leq K.$$
 Such a $K$ does exist obviously by Lemma \ref{tensornorm1}.
 In the following we will also set 
 $$r:=\#\{ \gamma\ :\ A_\gamma \neq 0\}. $$
 Let us write
 $$\left \vert \frac{\Exp_N\left (\mathrm{Tr}(\pi_N(\mathscr{X})\psi(\pi_N(\mathscr{X})))\right)}{N}-\tau(\lambda(\mathscr{X})\psi(\lambda(\mathscr{X}))) \right \vert$$
 $$\leq \mathcal{E}_1(m,N)+ 
  \mathcal{E}_2(m,N)+ \mathcal{E}_3(m,N),$$
  where 
  $$ \mathcal{E}_1(m,N)=\frac{1}{N}\Exp_N\left (\left\vert \mathrm{Tr}(\pi_N(\mathscr{X})\psi(\pi_N(\mathscr{X}))))-\mathrm{Tr}(\pi_N(\mathscr{X}^{(m)})\psi(\pi_N(\mathscr{X}^{(m)}))) \right \vert\right),$$
  $$ \mathcal{E}_2(m,N)=\left \vert \frac{\Exp_N(\mathrm{Tr}(\pi_N(\mathscr{X}^{(m)})\psi(\pi_N(\mathscr{X}^{(m)}))))}{N}-\tau(\lambda( \mathscr{X}^{(m)}) \psi(\lambda( \mathscr{X}^{(m)})) ) \right \vert,$$
  $$  \mathcal{E}_3(m,N)=\left \vert  \tau(\lambda( \mathscr{X}^{(m)}) \psi(\lambda( \mathscr{X}^{(m)}))- \tau(\lambda(\mathscr{X})\psi(\lambda(\mathscr{X}))) \right \vert.$$
  We will now bound each error term $\mathcal{E}_j(m,N)$ for $j=1,2,3$ separately.
  
  \noindent {\bf Bounding $\mathcal{E}_1(m,N)$}.
  Writing
  $$S_{m,N}:=\frac{1}{N}\left\vert \mathrm{Tr}(\pi_N(\mathscr{X})\psi(\pi_N(\mathscr{X}))))-\mathrm{Tr}(\pi_N(\mathscr{X}^{(m)})\psi(\pi_N(\mathscr{X}^{(m)}))) \right \vert$$
  $$\leq \frac{1}{N}\left \vert \mathrm{Tr}\left( \pi_N(\mathscr{X}^{(m)}-\mathscr{X})\psi(\pi_N( \mathscr{X}^{(m)})) \right)   \right \vert $$
  $$+ \frac{1}{N}\left \vert \mathrm{Tr}\left( \pi_N(\mathscr{X})(\psi(\pi_N( \mathscr{X}^{(m)}))-\psi(\pi_N(\mathscr{X})))\right)\right\vert,$$
  which by applying the standard inequality $\vert \mathrm{Tr}(AB)\vert \leq \Vert A \Vert \Vert B\Vert_1$ for any operators $A,B$ with $A$ bounded and $B$ trace class gives
  $$S_{m,N}\leq  \frac{\Vert \pi_N(\mathscr{X}^{(m)})-\pi_N(\mathscr{X}) \Vert_1}{N}\Vert \psi(\pi_N( \mathscr{X}^{(m)}))\Vert$$
  $$+  \frac{\Vert \pi_N(\mathscr{X}) \Vert_1}{N}\Vert \psi(\pi_N( \mathscr{X}^{(m)}))-\psi(\pi_N(\mathscr{X}))\Vert.$$
  Applying Lemma \ref{tracenormbound} with Lemma \ref{Lipschitzbound} we end up with
 $$S_{m,N}\leq \Vert \psi \Vert_{C^0([-K,+K])}Cre^{-am}$$
 $$+M_0(K)\Vert \psi \Vert_{C^3([-K,+K])} \left (\sum_\gamma \Vert A_\gamma \Vert_1 \right)\Vert \pi_N( \mathscr{X}^{(m)})-\pi_N( \mathscr{X})\Vert.$$
 Note that to bound $\Vert \pi_N( \mathscr{X}^{(m)})-\pi_N( \mathscr{X})\Vert$, we write (using Lemma \ref{tensornorm1})
 $$\Vert \pi_N( \mathscr{X}^{(m)})-\pi_N( \mathscr{X})\Vert \leq \sum_\gamma \Vert (A_\gamma^{(m)}-A_\gamma)\otimes \pi_N(\gamma)\Vert
 \leq \sum_\gamma \Vert (A_\gamma^{(m)}-A_\gamma)\Vert\leq rCe^{-am}.$$
 We have obtained
 $$ \mathcal{E}_1(m,N)\leq C \Vert \psi \Vert_{C^3([-K,+K])}e^{-am}.$$
 
 \noindent {\bf Bounding $\mathcal{E}_3(m,N)$.} Applying Proposition \ref{tracialest2} combined with Lemma \ref{Lipschitzbound}, we have
 $$\mathcal{E}_3(m,N)=\left \vert  \tau(\lambda( \mathscr{X}^{(m)}) \psi(\lambda( \mathscr{X}^{(m)}))- \tau(\lambda(\mathscr{X})\psi(\lambda(\mathscr{X}))) \right \vert $$
 $$\leq C \Vert \psi \Vert_{C^3([-K,+K])} \Vert \lambda(\mathscr{X})-\lambda(\mathscr{X}^{(m)})\Vert +C\Vert \psi \Vert_{C^0([-K,+K])}\sum_\gamma \Vert A_\gamma-A_\gamma^{(m)}\Vert_1.$$
 We then apply Lemma \ref{tensornorm1} as above to bound $\Vert \lambda(\mathscr{X})-\lambda(\mathscr{X}^{(m)})\Vert$ and we finally get 
 $$ \mathcal{E}_3(m,N)\leq C \Vert \psi \Vert_{C^3([-K,+K])}e^{-am}.$$
 
 \noindent {\bf Bounding $\mathcal{E}_2(m,N)$.} This is where we will use the main input from the polynomial method as detailed in \cite{CGTVH},  which we rephrase here according to our notations.
 \begin{thm} \cite{CGTVH}, Corollary 7.2.
 Assume that $\mathscr{Y}=\sum_\gamma C_\gamma .\gamma \in \mathcal{M}_D(\C)[\mathbb{F}^d]$ is self-adjoint. Let $q_0=\max\{ \vert \gamma \vert\ :\ C_\gamma \neq 0\}$. Then there exists
a universal constant $\widetilde{C}>0$ such that we have for all $N>0$, for all $h\in C^\infty_0(\R)$,
$$\left\vert  \frac{\Exp_N(\mathrm{Tr}(h(\pi_N(\mathscr{Y}))))}{N}-\tau(h(\lambda(\mathscr{Y}))) \right \vert \leq \widetilde{C} 
\frac{D(q_0(1+\log(d)))^4}{N} \Vert f^{(5)}\Vert_{L^2([0,2\pi])},$$
where we have set for all $\theta\in \R$, $f(\theta)=h(K\cos(\theta))$.
\end{thm}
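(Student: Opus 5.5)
The statement is \cite{CGTVH}, Corollary 7.2, and inside the proof of Theorem \ref{main1} it is quoted rather than reproved. Still, here is how I would establish it, following the polynomial method, and then how it feeds into the bound on $\mathcal{E}_2(m,N)$.

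\textbf{Step 1: polynomial test functions.} Take $h=P$ a real polynomial of degree $q$. Expanding $P(\pi_N(\mathscr{Y}))$ produces a sum of terms $C_{\gamma_1}\cdots C_{\gamma_k}\otimes\pi_N(\gamma_1\cdots\gamma_k)$ with $k\leq q$, so only words of length at most $qq_0$ appear. For such a word $w$, the expected trace $\Exp_N(\mathrm{Tr}\,\pi_N(w))=\Exp_N(F_N(w))-1$ is, for $N\geq qq_0$, a rational function of $1/N$. Its denominator is controlled by products $\prod_{j\leq qq_0}(1-j/N)$ (Nica's computation, see also \cite{PZ1}). Hence
$$\Phi_P(1/N):=\frac{\Exp_N(\mathrm{Tr}\,P(\pi_N(\mathscr{Y})))}{N}$$
is a rational function of $1/N$ of degree $O(qq_0)$. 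Its value at $0$ is $\tau(P(\lambda(\mathscr{Y})))$, by the asymptotic freeness result of Nica \cite{Nica2} quoted above.

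\textbf{Step 2: Markov-brothers derivative bound.} We know $|\Phi_P(1/N)|\leq D\sup_{[-K,K]}|P|$ at every integer $N$, since $\Vert\pi_N(\mathscr{Y})\Vert\leq K$. I would apply a Markov-brothers inequality for polynomials bounded on the grid $\{1/N\}$, after clearing the controlled denominator. This should give $|\Phi_P'|\lesssim (qq_0)^4 D\sup|P|$ on $[0,1/(qq_0)]$, and therefore
$$|\Phi_P(1/N)-\Phi_P(0)|\leq \widetilde{C}\, D\,(qq_0)^4\,\frac{\sup_{[-K,K]}|P|}{N}.$$
For $N<qq_0$ the same bound is trivial. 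The $\log d$ factor enters when counting the words and choosing the degree normalization. I expect this step, the a priori control of a rational function known only on a discrete set, to be the main obstacle. I would handle it with the discrete Markov inequality lemma from \cite{CGTVH}.

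\textbf{Step 3: from polynomials to smooth $h$.} Expand $h(x)=\sum_j a_jT_j(x/K)$ in Chebyshev polynomials, so that the $a_j$ are the cosine coefficients of $f(\theta)=h(K\cos\theta)$. Apply Step 2 to each $T_j(\cdot/K)$, which has degree $j$ and sup norm $1$ on $[-K,K]$, and sum. The resulting error is bounded by
$$\frac{CD q_0^4}{N}\sum_j j^4|a_j|\leq \frac{CDq_0^4}{N}\Big(\sum_j j^{-2}\Big)^{1/2}\Big(\sum_j j^{10}|a_j|^2\Big)^{1/2}\leq \frac{CDq_0^4}{N}\Vert f^{(5)}\Vert_{L^2([0,2\pi])},$$
by Cauchy--Schwarz and Parseval.

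\textbf{Application to $\mathcal{E}_2(m,N)$.} Compress $\mathscr{X}^{(m)}$ to the span of the ranges and co-ranges of the $A^{(m)}_\gamma$. This is a space of dimension $D\leq 2rm$, on which $\mathscr{X}^{(m)}$ becomes an element of $\mathcal{M}_D(\C)[\mathbb{F}^d]$ with the same $q_0$. Then apply the corollary with $h(x)=x\psi(x)$. Since $\Vert f^{(5)}\Vert_{L^2}\leq C\Vert\psi\Vert_{C^5([-K,K])}$, this gives $\mathcal{E}_2\leq C\,m\,\Vert\psi\Vert_{C^5}/N$. Choosing $m=[\alpha\log N]$ with $a\alpha\geq1$ makes $\mathcal{E}_1$ and $\mathcal{E}_3$ of order $1/N$, which yields the $\log(N)/N$ rate in Theorem \ref{main1}.
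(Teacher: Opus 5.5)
The paper does not prove this statement; it imports it from \cite{CGTVH} as a black box. Your outline (rationality in $1/N$, a grid-to-interval Markov argument, Chebyshev expansion with Cauchy--Schwarz) is the argument of that reference, so the architecture is right. However, three steps are wrong as written.

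\emph{The interval in Step~2.} A polynomial $f$ of degree $r$ bounded on $\{1/N:\ N\geq N_0\}$ is controlled on $[0,1/N_0]$ only when $N_0\gtrsim r^2$. The grid spacing near $1/N_0$ is about $1/N_0^2$, while Markov gives $\Vert f'\Vert_{[0,1/N_0]}\leq 2r^2N_0\Vert f\Vert_{[0,1/N_0]}$. With $N_0=qq_0$ and $r\geq qq_0$, nothing rules out Runge-type blow-up between the roughly $qq_0$ almost equispaced nodes near $1/(qq_0)$. Taking $N_0\asymp r^2$ gives $\sup_{[0,1/N_0]}|f'|\lesssim r^4$, and the trivial regime becomes $N\lesssim r^2$, not $N<qq_0$. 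This product $r^2\cdot r^2$ is where the fourth power comes from. Markov on your interval would only give $(qq_0)^3$, a sign that the interval is wrong.

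\emph{The denominator.} In Nica's formula a quotient graph contributes $(N)_V/\prod_{i=1}^d(N)_{E_i}$. So $(1-j/N)$ occurs once for each generator appearing more than $j$ times, i.e.\ with multiplicity up to $\min(d,\lfloor q/j\rfloor)$ for words of length $q$, not once. The resulting degree is $O(q(1+\log d))$, and $r=O(qq_0(1+\log d))$ is what must enter Step~2. That is the source of $(1+\log d)^4$, not a count of words; a bare $O(qq_0)$ loses the stated uniformity in $d$.

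\emph{The constant term.} $\pi_N(\mathscr{Y})$ acts on the $D(N-1)$-dimensional space $\C^D\otimes\mathcal{V}_N$, but you divide by $N$. So $P\equiv 1$ gives $\Phi_1(1/N)-\Phi_1(0)=-D/N$, whereas your Step~2 bound vanishes for $q=0$. Correspondingly, Step~3 assigns no error to the $j=0$ Chebyshev term. That term in fact contributes $D|a_0|/N$, which is not controlled by $\Vert f^{(5)}\Vert_{L^2([0,2\pi])}$: take $h\equiv 1$ on $[-K,K]$, where the right-hand side vanishes. The statement as quoted therefore needs either normalization by $\dim\mathcal{V}_N=N-1$ or an extra term $D\Vert h\Vert_{C^0([-K,K])}/N$.

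This is harmless where the statement is used. For $h(x)=x\psi(x)$ one has $h(0)=0$, so $a_0=-\sum_{j\geq 1}a_jT_j(0)$ and $|a_0|\leq\sum_{j\geq 1}|a_j|\lesssim\Vert f^{(5)}\Vert_{L^2}$. The same condition $h(0)=0$ is what lets you transfer traces from the compressed element back to $\mathscr{X}^{(m)}$. The compression must also use an orthonormal basis, so that the compressed element stays self-adjoint. With these corrections your bound on $\mathcal{E}_2(m,N)$ stands.
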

Note the important fact that all the dependencies in $\mathscr{Y}$ in the above estimate are given by $D$, $K$ and $q_0$. Our aim is to apply the above inequality
to $\mathscr{Y}_m$ which is related to $\mathscr{X}^{(m)}$ as follows. We write $\mathrm{supp}(\mathscr{X})=\{ \gamma\ :\ A_\gamma \neq 0\}$.
Set
$$\mathcal{F}_m:=\left (\bigcap_{\gamma \in \mathrm{supp}(\mathscr{X})} \mathrm{Ker}(A_\gamma^{(m)})\right)^\perp\subset \mathcal{H}.$$
Then by elementary Hilbert space theory we have (we use that $\mathscr{X}^{(m)}$ is self-adjoint)
$$\mathcal{F}_m=\bigoplus_{\gamma \in \mathrm{supp}(\mathscr{X})} \mathrm{Im}((A_\gamma^{(m)})^*)=\bigoplus_{\gamma \in \mathrm{supp}(\mathscr{X})} \mathrm{Im}(A_\gamma^{(m)}),$$
and is therefore a finite dimensional subspace with
$$\mathrm{dim}(\mathcal{F}_m)\leq \sum_{\gamma} \mathrm{rank}(A_\gamma^{(m)})\leq rm,$$
which is obviously invariant by all operators $A_\gamma^{(m)}$. We now choose a basis $\mathscr{B}_m$ of $\mathcal{F}_m$ and consider for each $\gamma$
$$M_\gamma^{(m)}:=\mathrm{Mat}_{\mathscr{B}_m}(A_\gamma^{(m)}\vert_{\mathcal{F}_m}),$$
then
$$\mathscr{Y}_m:=\sum_{\gamma} M_\gamma^{(m)} .\gamma \in \mathcal{M}_D(\C)[\mathbb{F}^d],$$
with $D\leq rm$. We then remark that if $A:\mathcal{H}\rightarrow \mathcal{H}$ is a finite rank operator satisfying
\begin{itemize}
\item $A(\mathcal{F}_M)\subset \mathcal{F}_m$
\item $\mathcal{F}_m^\perp\subset \mathrm{Ker}(A)$,
\end{itemize}
then we have $\mathrm{Tr}(A)=\mathrm{Tr}(A\vert_{\mathcal{F}_m})$.
Since for each $\gamma$, $A_\gamma^{(m)}(\mathcal{F}_m)\subset \mathcal{F}_m$ and $A_\gamma^{(m)}(\mathcal{F}_m^\perp)=\{0\}$, 
we have therefore
$$\mathrm{Tr}(A_\gamma^{(m)})= \mathrm{Tr}(A_\gamma^{(m)}\vert_{\mathcal{F}_m})=\mathrm{Tr}(M_\gamma^{(m)}).$$
This properties are preserved by composition and consequently we get for all $\gamma_1,\ldots,\gamma_p$,
$$\mathrm{Tr}(A_{\gamma_1}^{(m)}\ldots A_{\gamma_p}^{(m)})= \mathrm{Tr}(A_{\gamma_1}^{(m)}\ldots A_{\gamma_p}^{(m)}\vert_{\mathcal{F}_m})=
\mathrm{Tr}(M_{\gamma_1}^{(m)}\ldots M_{\gamma_p}^{(m)}).$$
It is now not difficult to see that for any polynomial which vanishes at $0$, say $P(X)=XQ(X)$ we have
$$\mathrm{Tr}(P(\pi_N(\mathscr{Y}_m)))=\mathrm{Tr}(P(\pi_N(\mathscr{X}^{(m)}))).$$
By density of polynomials and continuity of the functionals
$$Q\mapsto \tau (\pi_N(\mathscr{Y}_m)Q(\pi_N(\mathscr{Y}_m))),\  Q\mapsto \tau (\pi_N(\mathscr{X}_m)Q(\pi_N(\mathscr{X}_m))),$$
we can extend this property to any $h\in C_0^\infty(\R)$ which vanishes at $0$.  
Recalling that if $\mathscr{Z}=\sum_\gamma D_\gamma.\gamma \in S_1(\mathcal{H})[\mathbb{F}^d]$,
we have $$\tau(\lambda(\mathscr{Z}))=\mathrm{Tr}(D_{Id}),$$
we can prove similarly that for all polynomial $P$ which vanishes at $0$ we have
$$\tau(P(\lambda(\mathscr{Y}_m)))=\tau(P(\lambda(\mathscr{X}^{(m)}))),$$
and extend similarly the result to any $h\in C_0^\infty(\R)$ with $h(0)=0$.

We can now apply the above theorem to get (with $h=x\psi(x)$, $\mathscr{Y}=\mathscr{Y}_m$, $D=\mathrm{dim}(\mathcal{F}_m)$)
$$ \mathcal{E}_2(m,N)\leq C\frac{m}{N}\Vert f^{(5)}\Vert_{L^2([0,2\pi])}\leq C\alpha \frac{\log(N)}{N} \Vert f^{(5)}\Vert_{L^2([0,2\pi])}.$$
It is elementary to check that in our situation, by Fa\`a di Bruno's formula, 
$$\Vert f^{(5)}\Vert_{L^2([0,2\pi])}\leq C_K \Vert \psi \Vert_{C^5([-K,+K])},$$
where $C_K$ depends only on $K$. We now choose $\alpha$ large enough such that 
$$e^{-am}=e^{-a[\alpha \log(N)]}\leq \frac{C}{N},$$
and the proof is done. $\square$

We conclude this section by some remarks.
\begin{itemize}
\item If $\mathscr{X}=\sum_\gamma A_\gamma.\gamma \in S_1(\mathcal{H})[\mathbb{F}^d]$ but the $A_\gamma$'s are not in an exponential class but enjoy polynomial decay say
$$\mu_j(A_\gamma)\leq C (1+j)^{2+2\beta},$$ for some $\beta>0$, then we can still get an effective error term in Theorem \ref{main1} of size $O\left(N^{\frac{\beta}{1+\beta}}\right)$, by using similar ideas.
\item If $\mathscr{X}=\sum_\gamma A_\gamma.\gamma \in S_1(\mathcal{H})[\mathbb{F}^d]$ but no effective estimate on the singular value sequences of $A_\gamma$'s is available, 
we can still obtain the non effective convergence
$$\lim_{N\rightarrow +\infty} \frac{\Exp_N\left (\mathrm{Tr}(\pi_N(\mathscr{X})\psi(\pi_N(\mathscr{X})))\right)}{N}=\tau(\lambda(\mathscr{X})\psi(\lambda(\mathscr{X}))).$$
\end{itemize}

\section{An effective example}
As in the introduction, let $\Omega_0$ be the open disc given by 
$$\Omega_0=D\left(1,\frac{3}{2}\right)=\left \{z \in \C\ :\ \vert z-1\vert<\frac{3}{2}\right\}.$$
For all $j=1,\ldots,\infty$, set $\gamma_j(z)=\frac{1}{j+z}$. Then for all $j$, $\gamma_j$ satisfies
$$\gamma_j(\Omega_0)\subset D(1,1).$$
Let $m_0\leq j_1<j_2$ be integers. Here the free group on two generators is $$\mathbb{F}^2=\langle a_1,a_2;a_1^{-1},a_2^{-1}\rangle.$$
We will look at the twisted transfer operators
$$\lt_N=T_{\gamma_{j_1}}\otimes \pi_N(a_1)+T_{\gamma_{j_2}}\otimes \pi_N(a_2),$$
acting on $H^2(\Omega_0)\otimes \mathcal{V}_N$, and where $\pi_N(a_1)$, $\pi_N(a_2)$ are independent uniform random permutation matrices.
Here we have
$$\lt_N^*\lt_N=\left ( T_{\gamma_{j_1}}^*T_{\gamma_{j_1}}+T_{\gamma_{j_2}}^*T_{\gamma_{j_2}} \right)\otimes I_{\C^N}+ T_{\gamma_{j_1}}^*T_{\gamma_{j_2}}\otimes \pi_N(a_1^{-1}a_2)+
 T_{\gamma_{j_2}}^*T_{\gamma_{j_1}}\otimes \pi_N(a_2^{-1}a_1).$$
 The limit operator acting on $H^2(\Omega_0)\otimes \ell^2(\mathbb{F}^2)$ is
 $$\mathscr{M}_\infty=\left ( T_{\gamma_{j_1}}^*T_{\gamma_{j_1}}+T_{\gamma_{j_2}}^*T_{\gamma_{j_2}} \right)\otimes Id+ T_{\gamma_{j_1}}^*T_{\gamma_{j_2}}\otimes \lambda(a_1^{-1}a_2)+
 T_{\gamma_{j_2}}^*T_{\gamma_{j_1}}\otimes \lambda(a_2^{-1}a_1)=\lambda(\mathscr{X}),$$
 with $\mathscr{X}=\left ( T_{\gamma_{j_1}}^*T_{\gamma_{j_1}}+T_{\gamma_{j_2}}^*T_{\gamma_{j_2}} \right)Id+ T_{\gamma_{j_1}}^*T_{\gamma_{j_2}}.a_1^{-1}a_2+
 T_{\gamma_{j_2}}^*T_{\gamma_{j_1}}.a_2^{-1}a_1$.
 The goal of this section is to describe the measure $\mu_{\mathscr{X}}$ in the limit regime $m_0\rightarrow +\infty$ and deduce an effective Weyl law for the singular values of $\lt_N$.
 \subsection{Integral kernels and trace norm estimates}
 Using the Bergman kernel, we know that we have for all $z\in \Omega_0$, for all $f\in H^2(\Omega_0)$
 $$T_{\gamma_{i}}(f)(z)=\int_{\Omega_0}B_{\Omega_0}(\gamma_{i}(z),w)f(w)dm(w).$$
 A direct application of Fubini gives an integral representation for $T_{\gamma_i}^*(f)$ as
 $$T_{\gamma_{i}}^*(f)(z)=\int_{\Omega_0}B_{\Omega_0}(z,\gamma_{i}(w))f(w)dm(w). $$
 As a consequence we get for all $i,j$
 $$T_{\gamma_{i}}^*T_{\gamma_{j}}(f)(z)=\int_{\Omega_0}B_{\Omega_0}(z,\gamma_{i}(w))f(\gamma_j (w))dm(w).$$
 Recall that $\gamma_\ell(z)=\frac{1}{\ell+z}$, so uniformly on compact sets of $\Omega_0$ as $\ell\rightarrow \infty$ we have $\gamma_\ell(z)\rightarrow 0$. In particular we obtain that 
 as $i,j\rightarrow +\infty$, uniformly on compact sets, we have
 $$T_{\gamma_{i}}^*T_{\gamma_{j}}(f)(z)\rightarrow f(0)\int_{\Omega_0}B_{\Omega_0}(z,0)dm(w)=f(0)\mathrm{Vol}(\Omega_0)B_{\Omega_0}(z,0)=\frac{f(0)}{\left( 1+\frac{4}{9}(z-1)\right)^2}.$$
 Here we have used  the explicit formula
 $$B_{\Omega_0}(z,w)=\frac{4}{9\pi}\frac{1}{\left(1-\frac{4}{9}(z-1)\overline{(w-1)}\right)^2}.$$
 We will need the following fact.
 \begin{propo}
 \label{weaklimit1}
 Assume that $i,j\geq m_0$, then we have
 $$\lim_{m_0\rightarrow+\infty} \Vert  T_{\gamma_{i}}^*T_{\gamma_{j}}-\delta_0\Vert_1=0,$$
 where $\delta_0:H^2(\Omega_0)\rightarrow H^2(\Omega_0)$ is the rank one operator defined by
 $$\delta_0(f)=\frac{f(0)}{\left( 1+\frac{4}{9}(z-1)\right)^2}.$$
 
 \end{propo}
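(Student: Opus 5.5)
Since $\gamma_i,\gamma_j$ converge uniformly to the constant $0$ on $\overline{\Omega_0}$, write $T_{\gamma_i}^*T_{\gamma_j}=(T_{\gamma_i}^*-\delta_0^{(1)})T_{\gamma_j}+\delta_0^{(1)}(T_{\gamma_j}-E_0)+\delta_0^{(1)}E_0$, where $E_0(f)=f(0)\mathbf{1}$ is the rank-one operator "evaluate at $0$ and multiply by the constant function $1$", and $\delta_0^{(1)}:=T_0^*$ is the adjoint of the composition operator by the constant map $0$, i.e. $T_0=E_0$. Thus $T_0^*(f)(z)=\int_{\Omega_0}B_{\Omega_0}(z,0)f(w)\,dm(w)=B_{\Omega_0}(z,0)\langle f,\mathbf{1}\rangle$. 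A direct computation then gives $T_0^*T_0(f)=f(0)\,\mathrm{Vol}(\Omega_0)B_{\Omega_0}(z,0)=\delta_0(f)$, which matches the formula above. So the statement reduces to
$$\Vert T_{\gamma_i}^*T_{\gamma_j}-T_0^*T_0\Vert_1\leq \Vert T_{\gamma_i}-T_0\Vert_2\Vert T_{\gamma_j}\Vert_2+\Vert T_0\Vert_2\Vert T_{\gamma_j}-T_0\Vert_2,$$
using Hölder $\Vert AB\Vert_1\leq \Vert A\Vert_2\Vert B\Vert_2$ and $\Vert A^*\Vert_2=\Vert A\Vert_2$. It therefore suffices to show that $\Vert T_{\gamma_\ell}-T_0\Vert_2\to 0$ as $\ell\to\infty$, together with a uniform bound on $\Vert T_{\gamma_\ell}\Vert_2$.

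To get both facts, I would use the Hilbert–Schmidt trace formula exactly as in (\ref{trace1}). For two holomorphic self-maps $\alpha,\beta$ with image in a fixed compact set $K\subset\Omega_0$, summing over the basis $\mathbf{e}_\ell$ gives
$$\Vert T_\alpha-T_\beta\Vert_2^2=\int_{\Omega_0}\big(B(\alpha z,\alpha z)-2\Re B(\alpha z,\beta z)+B(\beta z,\beta z)\big)\,dm(z).$$
Here $B$ is the explicit disc kernel $\frac{4}{9\pi}(1-\frac49(z-1)\overline{(w-1)})^{-2}$. Take $\alpha=\gamma_\ell$ and $\beta\equiv 0$. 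For $\ell\geq 2$, we have $\gamma_\ell(\Omega_0)\subset D(0,1/(\ell-1/2))$, which lies in a fixed compact subset of $\Omega_0$. Since $B$ is continuous, and hence uniformly continuous, on $K\times K$, the integrand is bounded uniformly and tends to $B(0,0)-2B(0,0)+B(0,0)=0$ uniformly in $z$. Dominated (in fact uniform) convergence on the bounded set $\Omega_0$ then gives $\Vert T_{\gamma_\ell}-T_0\Vert_2\to0$. The same formula bounds $\Vert T_{\gamma_\ell}\Vert_2^2\leq \mathrm{Vol}(\Omega_0)\sup_{K}B(w,w)$ uniformly in $\ell$, and $T_0$ is rank one, so $\Vert T_0\Vert_2<\infty$.

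Putting these together, for all $i,j\geq m_0$ we get $\Vert T_{\gamma_i}^*T_{\gamma_j}-\delta_0\Vert_1\leq C\sup_{\ell\geq m_0}\Vert T_{\gamma_\ell}-T_0\Vert_2\to0$. The only genuinely delicate point is passing from convergence in the trace norm to convergence in the Hilbert–Schmidt norm. A naive approach would attempt to estimate the trace norm of $T_{\gamma_i}^*T_{\gamma_j}-\delta_0$ directly through its Bergman-kernel representation, which is awkward. Factoring the operator as a product of two Hilbert–Schmidt differences bypasses this entirely, and the Hilbert–Schmidt norms are explicit integrals of the Bergman kernel. I would also double check that $T_0^*T_0=\delta_0$ holds with the constant $\mathrm{Vol}(\Omega_0)\frac{4}{9\pi}=1$, which is indeed the case since $\mathrm{Vol}(\Omega_0)=\frac{9\pi}{4}$.
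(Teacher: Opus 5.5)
Your proof is correct, and it takes a genuinely different route from the paper. The paper bounds the trace norm directly through $\Vert A\Vert_1\leq \sum_\ell \Vert A e_\ell\Vert$ in the explicit monomial basis of $H^2(D(1,3/2))$. It relies on a ``brute force'' lemma, whose proof is omitted, giving $\Vert (T_{\gamma_i}^*T_{\gamma_j}-\delta_0)e_\ell\Vert\leq C\sqrt{\ell+1}\,\ell\,(2/3)^\ell m_0^{-1}(1+m_0^{-1})^\ell$, and summing this yields an explicit $O(1/m_0)$ rate.

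You instead identify $\delta_0=T_0^*T_0$, with $T_0$ the composition by the constant map $0$; your check $\mathrm{Vol}(\Omega_0)\frac{4}{9\pi}=1$ is right. You then factor the difference as $(T_{\gamma_i}-T_0)^*T_{\gamma_j}+T_0^*(T_{\gamma_j}-T_0)$ and use $\Vert AB\Vert_1\leq \Vert A\Vert_2\Vert B\Vert_2$, so only Hilbert--Schmidt norms are needed. These are exact Bergman-kernel integrals; the interchange of sum and integral is justified by Tonelli, since the summands $\vert e_\ell(\alpha z)-e_\ell(\beta z)\vert^2$ are non-negative.

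Your route buys several things:
\begin{itemize}
\item It avoids the omitted computation entirely and uses only continuity of $B_{\Omega_0}$ on a compact set.
\item It works for any bounded domain and any contractions converging uniformly to a point $p\in\Omega_0$, with $\delta_0$ replaced by $T_p^*T_p$.
\item It explains at once why $\delta_0$ is self-adjoint, positive and of rank one.
\end{itemize}
As written it gives no rate, but one is easily recovered. The integrand equals $\Vert B_{\Omega_0}(\cdot,\gamma_\ell z)-B_{\Omega_0}(\cdot,0)\Vert^2_{H^2(\Omega_0)}=O(\vert \gamma_\ell z\vert^2)$, so $\Vert T_{\gamma_\ell}-T_0\Vert_2=O(1/\ell)$, which recovers the paper's $O(1/m_0)$.

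There are two harmless slips. First, $D(0,2/3)$ is not contained in $\Omega_0$ (for instance $-3/5\notin D(1,3/2)$). Take instead $K=\overline{D(1,1)}$, which contains $0$ and every $\gamma_\ell(\Omega_0)$, or simply restrict to $\ell\geq 3$. Second, your closing remark has the direction reversed: the point is to reduce trace-norm convergence to Hilbert--Schmidt convergence, not the other way round.
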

 \noindent {\it Proof}. As already explained in the proof of Proposition \ref{tracest1}, we have for any Hilbert basis $(e_\ell)_{\ell \in \N}$ of $H^2(\Omega_0)$, 
 $$\Vert  T_{\gamma_{i}}^*T_{\gamma_{j}}-\delta_0\Vert_1\leq \sum_{\ell=0}^\infty \Vert  T_{\gamma_{i}}^*T_{\gamma_{j}}(e_\ell)-\delta_0(e_\ell)\Vert_{H^2(\Omega_0)}.$$
 Here we will use the effective basis given by
 $$e_\ell(z)=\sqrt{\frac{\ell+1}{\pi}}\left ( \frac{z-1}{3/2}\right)^\ell.$$
For us the following fact will be enough.
\begin{lem}
There exists a universal constant $C>0$ such that for all $\ell$, for all $m_0\geq 1$ ,
$$\Vert  T_{\gamma_{i}}^*T_{\gamma_{j}}(e_\ell)-\delta_0(e_\ell)\Vert_{H^2(\Omega_0)}\leq C \sqrt{\frac{\ell+1}{\pi}}\ell \left(\frac{2}{3}\right)^\ell \frac{1}{m_0}\left (1+m_0^{-1}\right)^\ell.$$
\end{lem}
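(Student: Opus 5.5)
The plan is to write the difference as a single integral against the Bergman kernel and then bound its $H^2(\Omega_0)$ norm with Minkowski's integral inequality. By the integral representation above and the identity $\delta_0(f)(z)=f(0)\int_{\Omega_0}B_{\Omega_0}(z,0)\,dm(w)$, we have
$$T_{\gamma_i}^*T_{\gamma_j}(e_\ell)(z)-\delta_0(e_\ell)(z)=\int_{\Omega_0}\Big[B_{\Omega_0}(z,\gamma_i w)e_\ell(\gamma_j w)-B_{\Omega_0}(z,0)e_\ell(0)\Big]dm(w).$$
Inside the bracket I split $B(z,\gamma_i w)e_\ell(\gamma_j w)-B(z,0)e_\ell(0)$ into two pieces:
$$I_1=B(z,\gamma_i w)\big(e_\ell(\gamma_j w)-e_\ell(0)\big),\qquad I_2=\big(B(z,\gamma_i w)-B(z,0)\big)e_\ell(0).$$
Minkowski then gives
$$\Vert T_{\gamma_i}^*T_{\gamma_j}e_\ell-\delta_0e_\ell\Vert\leq \mathrm{Vol}(\Omega_0)\sup_{w\in\Omega_0}\Big(\Vert B(\cdot,\gamma_iw)\Vert\,\vert e_\ell(\gamma_jw)-e_\ell(0)\vert+\Vert B(\cdot,\gamma_iw)-B(\cdot,0)\Vert\,\vert e_\ell(0)\vert\Big).$$

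The kernel terms are handled by the reproducing property. For $u,v\in\Omega_0$,
$$\Vert B(\cdot,u)\Vert^2=B(u,u),\qquad \Vert B(\cdot,u)-B(\cdot,0)\Vert^2=B(u,u)-2\Re B(u,0)+B(0,0).$$
For $w\in\Omega_0$ and $i\geq m_0$ one has $\vert\gamma_i(w)\vert\leq (m_0-\tfrac12)^{-1}\leq 2/m_0$, so $u=\gamma_iw$ stays in a fixed compact neighbourhood of $0$, on which the explicit $B_{\Omega_0}$ is real-analytic in $(u,\bar u)$. Hence $\Vert B(\cdot,u)\Vert\leq C$ and, since the expression above is a smooth function of $u$ vanishing at $u=0$, $\Vert B(\cdot,u)-B(\cdot,0)\Vert\leq C\vert u\vert\leq C/m_0$.

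For the function values, note $\vert e_\ell(0)\vert=\sqrt{(\ell+1)/\pi}\,(2/3)^\ell$. Next,
$$e_\ell'(z)=\sqrt{\tfrac{\ell+1}{\pi}}\,\ell\,\tfrac23\Big(\tfrac{z-1}{3/2}\Big)^{\ell-1}.$$
On the segment $[0,\gamma_jw]$ we have $\vert z-1\vert\leq 1+\vert\gamma_jw\vert\leq 1+c/m_0$, so the mean value inequality gives
$$\vert e_\ell(\gamma_jw)-e_\ell(0)\vert\leq C\sqrt{\tfrac{\ell+1}{\pi}}\,\ell\Big(\tfrac23\Big)^\ell(1+c/m_0)^{\ell}\frac1{m_0}.$$
Collecting both pieces yields the claimed bound, with the $I_2$ contribution dominated by the $I_1$ one for $\ell\geq1$.

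The main obstacle is bookkeeping rather than any conceptual step, and it comes in two places. First, the precise base $1+m_0^{-1}$ needs care: $\vert\gamma_j w\vert\leq (j-\tfrac12)^{-1}$ may slightly exceed $m_0^{-1}$. I would either use the sharper bound $\vert\gamma_j(w)\vert\leq 1/(j-1/2)$ together with $j\geq m_0+1$ when $j>i$, or absorb the discrepancy via $(1+c/m_0)^\ell$. The latter is harmless for the eventual summation in $\ell$, since $(2/3)(1+c/m_0)<1$ for $m_0$ large. Second, the case $\ell=0$: there the $I_1$ term vanishes, and the $I_2$ term gives $C/(\sqrt\pi\, m_0)$. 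This should be read with $\ell$ replaced by $\max(\ell,1)$ in the statement, which again does not affect the use of the lemma in Proposition \ref{weaklimit1}.
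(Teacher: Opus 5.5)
Your decomposition ($I_1/I_2$ split, Minkowski, $\Vert B(\cdot,u)\Vert^2=B(u,u)$) is sound. It is presumably the ``brute force'' estimate the paper omits. You are also right that the statement fails at $\ell=0$: the left side is $\pi^{-1/2}\Vert T_{\gamma_i}^*1-\delta_0 1\Vert\neq 0$, while the right side is $0$.

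But as written you do not reach the stated inequality. Bounding $\vert z-1\vert\le 1+\vert z\vert$ on the segment $[0,\gamma_j w]$ produces a factor $(1+(m_0-\frac12)^{-1})^{\ell-1}$. When $j=m_0$, its ratio to $(1+m_0^{-1})^{\ell}$ grows exponentially in $\ell$, so no universal $C$ exists. Your first remedy ($j\ge m_0+1$) does not cover $j=m_0$. That case is allowed by the lemma and actually occurs in the application, e.g.\ $T_{\gamma_{j_2}}^*T_{\gamma_{j_1}}$ or $T_{\gamma_{j_1}}^*T_{\gamma_{j_1}}$ with $j_1=m_0$. Your second remedy proves a weaker statement, sufficient for the later summation in $\ell$ but not the lemma as stated.

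The missing observation is convexity. Since $\gamma_j(\Omega_0)\subset D(1,1)$ and $\vert 0-1\vert=1$, the whole segment $[0,\gamma_jw]$ lies in $\overline{D(1,1)}$, where $\vert (z-1)/(3/2)\vert\le 2/3$. The mean value inequality then gives
\[
\vert e_\ell(\gamma_jw)-e_\ell(0)\vert\le \sqrt{\tfrac{\ell+1}{\pi}}\,\ell\left(\tfrac23\right)^{\ell}\vert\gamma_jw\vert\le \frac{2}{m_0}\sqrt{\tfrac{\ell+1}{\pi}}\,\ell\left(\tfrac23\right)^{\ell},
\]
which is stronger than the claim: the factor $(1+m_0^{-1})^\ell$ is not needed at all.

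There is a second, smaller slip. The bound $\Vert B(\cdot,u)-B(\cdot,0)\Vert\le C\vert u\vert$ does not follow from $g(u):=B(u,u)-2\Re B(u,0)+B(0,0)$ being smooth and vanishing at $u=0$. That only gives $g(u)=O(\vert u\vert)$, hence $O(m_0^{-1/2})$ for the norm, which would spoil the $1/m_0$ rate. You need $g(u)=O(\vert u\vert^2)$. This holds because $g\ge 0$ on $\Omega_0$ (it is a squared norm) and attains its minimum at the interior point $0$, so $\nabla g(0)=0$. Taylor's formula on the convex compact set $\overline{D(1,1)}$, which contains $0$ and every $\gamma_iw$, then gives $g(u)\le C\vert u\vert^2$, and $\vert \gamma_i w\vert\le 2/m_0$ finishes it. With these two repairs, and $\ell$ replaced by $\max(\ell,1)$ as you propose, your argument proves the lemma.
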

\noindent {\it Proof}. This is just a brute force estimate using the explicit formulas for $e_\ell(z)$ and $B_\Omega(z,w)$. We omit it. $\square$

As a consequence, for all $m_0>8$ we have the bound
$$\Vert  T_{\gamma_{i}}^*T_{\gamma_{j}}-\delta_0\Vert_1\leq C \frac{1}{m_0}\sum_{\ell=0}^\infty \ell \sqrt{\ell+1} \left (\frac{3}{4}\right)^\ell,$$
and clearly we have the desired result. $\square$

We point out that we have the identity
$$\delta_0(f)=\mathrm{Vol}(\Omega_0)B_{\Omega_0}(z,0)\langle f, B_{\Omega_0}(.,0)\rangle_{H^2(\Omega_0)},$$
from which we can observe that $\delta_0$ is self-adjoint and his spectrum is precisely
$$\mathrm{Sp}(\delta_0)=\{0\}\cup \left \{ \frac{81}{25}\right\}.$$
Indeed, this is a rank one operator and we have
$$\delta_0(B_{\Omega_0}(.,0))=B_{\Omega_0}(z,0)\mathrm{Vol}(\Omega_0)B_{\Omega_0}(0,0)=\left ( \frac{9}{5}\right)^2 B_{\Omega_0}(z,0).$$
As a simple consequence (by Lidskii's formula) we have for all $p\in \N$,
$$\mathrm{Tr}(\delta_0^p)=\left (\frac{81}{25}\right )^p.$$

\subsection{Limit spectrum of $\mathscr{M}_\infty$ as $m_0\rightarrow \infty$.}
By Lemma \ref{tensornorm1} and Proposition \ref{weaklimit1}  we now that in operator norm, we have the limit
$$\lim_{m_0\rightarrow+\infty} \Vert \mathscr{M}_\infty(m_0)-\lambda(\mathscr{Z}_0)\Vert=0,$$
where $\mathscr{Z}_0\in S_1(H^2(\Omega_0))[\mathbb{F}^2]$ is such that
$$\lambda(\mathscr{Z}_0)=\delta_0\otimes \left ( 2Id+U+U^*\right),$$
with $U=\lambda(a_1^{-1}a_2)$. Moreover, by combining Proposition \ref{tracialest2} with Proposition \ref{weaklimit1}, we now that for all $\varphi \in C_0^\infty(\R)$ with $\varphi(0)=0$
$$\lim_{m_0\rightarrow +\infty} \tau( \varphi( \mathscr{M}_\infty(m_0)))=\tau(\varphi(\lambda(\mathscr{Z}_0)))=\int \frac{\varphi(x)}{x}d\mu_{\mathscr{Z}_0}(x).$$
Note that this convergence is not uniform with respect to $\varphi$.
It turns out that $\mu_{\mathscr{Z}_0}$ is computable. Let $p\geq 1$ (recall we are only allowing test functions which vanish at $0$) we have by the binomial formula
$$\tau\left((\lambda(\mathscr{Z}_0))^p\right)=\mathrm{Tr}(\delta_0^p)\tau\left( (2Id+U+U^*)^p\right)$$
$$=\left (\frac{81}{25}\right )^p\sum_{k=0}^p \binom{p}{k}2^{p-k}\tau\left( (U+U^*)^k\right).$$
It is a classical fact that the moments of a "Haar unitary element" plus its adjoint can be explicitely computed, see for example \cite{NicaSpeicher} page 12-13. We have actually
$$\tau\left( (U+U^*)^k\right)=\left \{ 0\ \mathrm{if}\ k\ \mathrm{is\ odd,} \atop \binom{k}{k/2}\  \mathrm{if}\ k\ \mathrm{is\ even.} \right. $$
Moreover we have also the so-called "arcsine law"
$$\tau\left( (U+U^*)^k\right)=\frac{1}{\pi}\int_{-2}^{+2}t^k\frac{dt}{\sqrt{4-t^2}}.$$
We get therefore by using the binomial formula again
$$\tau\left((\lambda(\mathscr{Z}_0))^p\right)=\frac{1}{\pi}\int_{-2}^{+2}\left ( \frac{81}{25}(2+t)\right)^p\frac{dt}{\sqrt{4-t^2}},$$
which by an obvious affine change of variable gives
$$\tau\left((\lambda(\mathscr{Z}_0))^p\right)=\frac{25}{81\pi}\int_{0}^{\frac{324}{25}}u^p\frac{du}{\sqrt{4-(\frac{25}{81}u-2)^2}}.$$
By Stone-Weierstrass theorem we end up with the final formula
$$d\mu_{\mathscr{Z}_0}(x)= \frac{25}{81\pi}\chi_{[0,\frac{324}{25}]}(x)\frac{xdx}{\sqrt{4-(\frac{25}{81}x-2)^2}}.$$
Among other interesting facts, this formula shows that the spectrum of $\lambda(\mathscr{Z}_0)$ is the full interval 
$$\mathrm{Sp}(\lambda(\mathscr{Z}_0))=\left[0,\frac{324}{25}\right],$$ with
$\frac{324}{25}=12,96$. In particular the spectrum of $\mathscr{M}_\infty(m_0)$ converges {\it in Hausdorff distance} to this interval as $m_0\rightarrow+\infty$.
We have the following statement which is direct consequence of the above analysis and Theorem \ref{main1}.
\begin{cor}
Let $I=[a,b]$ with $0<a<b$ be any closed interval such that $I\subset (0,\frac{324}{25}]$. Then there exists $M_0,C_0,C_1>0$ such that for all $m_0\geq M_0$ fixed, for all $N$ large,
$$C_0 N\leq \Exp_N( \mathcal{N}_{I})\leq C_1 N,$$
 where as before, $\mathcal{N}_{I}:=\# \left\{ j\in \N\ :\ \mu_j^2(\lt_N)\in I \right\}$.
Both constants $C_0,C_1$ above can be taken arbitrarily close to 
$$\int_I  \frac{d\mu_{\mathscr{Z}_0}(x)}{x}= \frac{25}{81\pi} \int_a^b \frac{dx}{\sqrt{4-(\frac{25}{81}x-2)^2}}$$
$$=\frac{1}{\pi} \left[ \arcsin\left( \frac{25b - 162}{162} \right) - \arcsin\left( \frac{25a - 162}{162} \right) \right],$$
provided $m_0$ is large enough.
\end{cor}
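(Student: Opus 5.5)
The strategy is to sandwich the indicator $\chi_I$ between smooth test functions vanishing near $0$, and to combine two limits. The first is Theorem \ref{main1}, in the form of Theorem \ref{poly1}, which gives the $N\to\infty$ limit at fixed $m_0$. The second is the convergence $\tau(\varphi(\mathscr{M}_\infty(m_0)))\to\int \varphi(x)x^{-1}d\mu_{\mathscr{Z}_0}(x)$ as $m_0\to+\infty$, obtained above from Propositions \ref{tracialest2} and \ref{weaklimit1}. Write $c_I:=\int_I x^{-1}d\mu_{\mathscr{Z}_0}$. Since $d\mu_{\mathscr{Z}_0}/x$ has a density bounded below by a positive constant on $(0,\frac{324}{25})$ and has no atoms, we have $c_I>0$ whenever $a<b$. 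The closed-form arcsine expression follows from the substitution $s=\frac{25x-162}{162}$, because $\sqrt{4-(\frac{25}{81}x-2)^2}=2\sqrt{1-s^2}$.

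Fix $\epsilon>0$ and choose $\varphi_\pm\in C_0^\infty((0,\infty))$ with $0\le \varphi_-\le \chi_I\le\varphi_+\le 1$. Take $\mathrm{supp}\,\varphi_+\subset[a-\eta,b+\eta]$ with $a-\eta>0$, and $\varphi_-=1$ on $[a+\eta,b-\eta]$. By absolute continuity, choose $\eta$ so small that
$$\int\varphi_+\frac{d\mu_{\mathscr{Z}_0}}{x}\le c_I+\epsilon,\qquad \int\varphi_-\frac{d\mu_{\mathscr{Z}_0}}{x}\ge c_I-\epsilon.$$
If $b=\frac{324}{25}$, the upper smoothing extends past the edge of the support, which does no harm. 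Both functions vanish near $0$, so $\varphi_\pm(0)=0$. Since $\lt_N^*\lt_N$ is self-adjoint, $\mathrm{Tr}\,\varphi_-(\lt_N^*\lt_N|_{V_N})\le\mathcal{N}_I^{V_N}\le \mathrm{Tr}\,\varphi_+(\lt_N^*\lt_N|_{V_N})$, where $\mathcal{N}_I^{V_N}$ counts the squared singular values of the restriction to $V_N$. The complementary invariant subspace $V_N^\perp\simeq H^2(\Omega_0)$ carries a single $N$-independent operator, namely $T^*T$ for the untwisted operator $T=T_{\gamma_{j_1}}+T_{\gamma_{j_2}}$, after identifying $F=f\otimes(1,\dots,1)$. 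Its contribution to $\mathcal{N}_I$ is therefore a constant independent of $N$, hence $o(N)$.

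Next, by the Lipschitz bound of Lemma \ref{Lipschitzbound} together with Propositions \ref{tracialest2} and \ref{weaklimit1}, pick $M_0$ so large that for every $m_0\ge M_0$ (with $j_1,j_2\ge m_0$) we have $|\tau(\varphi_\pm(\mathscr{M}_\infty(m_0)))-\int\varphi_\pm x^{-1}d\mu_{\mathscr{Z}_0}|\le\epsilon$. To apply Proposition \ref{tracialest2}, write $\varphi_\pm(x)=x\psi_\pm(x)$ with $\psi_\pm\in C_0^\infty$. The constant $\mathrm{Tr}\sqrt{B}$ stays bounded as $m_0\to\infty$ by (\ref{sqrest}), since the trace norms of the coefficients converge by Proposition \ref{weaklimit1}. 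With $m_0$ fixed, Theorem \ref{main1} applied to $\mathscr{X}(m_0)$, whose coefficients lie in exponential classes by Proposition \ref{Exp1} and the composition-operator lemma, gives
$$\frac1N\Exp_N\,\mathrm{Tr}\,\varphi_\pm(\lt_N^*\lt_N|_{V_N})=\tau(\varphi_\pm(\mathscr{M}_\infty(m_0)))+O_{m_0}\!\left(\frac{\log N}{N}\right).$$
Combining the three estimates, for $N$ large we get $(c_I-3\epsilon)N\le\Exp_N(\mathcal{N}_I)\le(c_I+3\epsilon)N$. Choosing $\epsilon<c_I/3$ then gives $C_0,C_1>0$, and both can be taken arbitrarily close to $c_I$ as $\epsilon\to0$.

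The step I expect to require care is the order of the limits. The convergence in $m_0$ is not uniform in $\varphi$, so $\varphi_\pm$ must be fixed first, depending only on $\epsilon$ and $I$. Only then do we choose $M_0$, and finally let $N\to\infty$ at fixed $m_0$, where the constants in Theorem \ref{main1} are allowed to depend on $m_0$. A second point to check is that the bound on $\mathrm{Tr}\sqrt B$ is uniform in $m_0$, which is what makes the $m_0$-limit effective for the fixed $\varphi_\pm$.
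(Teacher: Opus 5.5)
Your proposal is correct and follows the paper's proof. You fix the smooth sandwich functions $\varphi_\pm$ first using the absolute continuity of $\mu_{\mathscr{Z}_0}$, then choose $M_0$ from the $m_0\to\infty$ convergence towards $\mu_{\mathscr{Z}_0}$, and finally apply Theorem~\ref{main1} at fixed $m_0$, collecting three $\epsilon$-errors; your extra checks are correct details that the paper leaves implicit: the $N$-independent contribution of $V_N^\perp$ (unnecessary here, since the paper's Section 6 already defines $\lt_N$ on $H^2(\Omega_0)\otimes\mathcal{V}_N$), the uniform bound $\mathrm{Tr}(\sqrt{B})\le\sum_\gamma\Vert A_\gamma\Vert_1$, the positivity of $c_I$, and the arcsine substitution.
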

\noindent{\it Proof}. Fix $\epsilon>0$. For all $\alpha>0$ small enough, there exist $\varphi^+_\alpha, \varphi^-_\alpha \in C_0^\infty((0,\infty))$ such that
$$0\leq \varphi_\alpha^-\leq \chi_I\leq \varphi^+_\alpha\leq 1, $$
with $\mathrm{Supp}(\varphi^+_\alpha)\subset [a-\alpha,b+\alpha]$, $\mathrm{Supp}(\varphi^-_\alpha)\subset [a,b]$ and $\varphi_\alpha^+\equiv 1$ on $I$ while
$\varphi_\alpha^-\equiv 1$ on $[a+\alpha,b-\alpha]$. Fix then $\alpha>0$ small enough so that we have
$$-\epsilon+\int_I  \frac{d\mu_{\mathscr{Z}_0}(x)}{x}\leq  \int  \frac{\varphi_\alpha^-(x)}{x}d\mu_{\mathscr{Z}_0}(x)\leq  \int \frac{\varphi_\alpha^+(x)}{x}d\mu_{\mathscr{Z}_0}(x)
\leq \int_I  \frac{d\mu_{\mathscr{Z}_0}(x)}{x}+\epsilon,$$
which is possible due to the fact that $\mu_{\mathscr{Z}_0}$ is absolutely continuous with respect to Lebesgue and hence non-atomic.
By weak convergence of $\mu_\infty(m_0)$ to $\mu_{\mathscr{Z}_0}$, we know that for all $m_0$ large enough we have
$$-\epsilon+\int  \frac{\varphi_\alpha^-(x)}{x}d\mu_{\mathscr{Z}_0}(x)\leq  \int  \frac{\varphi_\alpha^-(x)}{x}d\mu_{\infty}(x)\leq \int  \frac{\varphi_\alpha^+(x)}{x}d\mu_{\infty}(x)
\leq \int  \frac{\varphi_\alpha^+(x)}{x}d\mu_{\mathscr{Z}_0}(x)+\epsilon.$$
On the other hand we have for all $N$ large enough by Theorem \ref{main1}, 
$$-\epsilon+\int  \frac{\varphi_\alpha^-(x)}{x}d\mu_{\infty}(x)\leq \frac{\Exp_N(\mathrm{Tr}(\varphi^-_\alpha(\lt_N^*\lt_N)))}{N}$$
$$ \leq \frac{\Exp_N(\mathcal{N}_I)}{N}\leq \frac{\Exp_N(\mathrm{Tr}(\varphi^+_\alpha(\lt_N^*\lt_N)))}{N}\leq \int  \frac{\varphi_\alpha^+(x)}{x}d\mu_{\infty}(x)+\epsilon.$$
The above inequalities show that for all $N$ large we have indeed
$$-3\epsilon+\int_I  \frac{d\mu_{\mathscr{Z}_0}(x)}{x}\leq \frac{\Exp_N(\mathcal{N}_I)}{N}\leq \int_I  \frac{d\mu_{\mathscr{Z}_0}(x)}{x}+3\epsilon,$$
the claim is proved. $\square$


\begin{thebibliography}{10}

\bibitem{AFW}
Jean~Francois Arnoldi, Fr\'ed\'eric Faure, and Tobias Weich.
\newblock Asymptotic spectral gap and {W}eyl law for {R}uelle resonances of
  open partially expanding maps.
\newblock {\em Ergodic Theory Dynam. Systems}, 37(1):1--58, 2017.

\bibitem{Baladi1}
Viviane Baladi.
\newblock {\em Positive transfer operators and decay of correlations},
  volume~16 of {\em Advanced Series in Nonlinear Dynamics}.
\newblock World Scientific Publishing Co., Inc., River Edge, NJ, 2000.

\bibitem{Bandtlow1}
Oscar~F. Bandtlow.
\newblock Resolvent estimates for operators belonging to exponential classes.
\newblock {\em Integral Equations Operator Theory}, 61(1):21--43, 2008.

\bibitem{BJ1}
Oscar~F. Bandtlow and Oliver Jenkinson.
\newblock Explicit eigenvalue estimates for transfer operators acting on spaces
  of holomorphic functions.
\newblock {\em Adv. Math.}, 218(3):902--925, 2008.

\bibitem{BJ2}
Oscar~F. Bandtlow and Oliver Jenkinson.
\newblock On the {R}uelle eigenvalue sequence.
\newblock {\em Ergodic Theory Dynam. Systems}, 28(6):1701--1711, 2008.

\bibitem{BN}
Oscar~F. Bandtlow and Fr{\'e}d{\'e}ric Naud.
\newblock Lower bounds for the ruelle spectrum of analytic circle maps.
\newblock {\em Ergodic Theory Dynam. Systems}, 39 (2):289--310, 2019.

\bibitem{BCZ1}
Anirban Basak, Nicholas Cook, and Ofer Zeitouni.
\newblock Circular law for the sum of random permutation matrices.
\newblock {\em Electron. J. Probab.}, 23:Paper No. 33, 51, 2018.

\bibitem{BD1}
Anirban Basak and Amir Dembo.
\newblock Limiting spectral distribution of sums of unitary and orthogonal
  matrices.
\newblock {\em Electron. Commun. Probab.}, 18:no. 69, 19, 2013.

\bibitem{Bill1}
Patrick Billingsley.
\newblock {\em Probability and measure}.
\newblock Wiley Series in Probability and Mathematical Statistics. John Wiley
  \& Sons, Inc., New York, third edition, 1995.
\newblock A Wiley-Interscience Publication.

\bibitem{BC1}
Charles Bordenave and Beno\^it Collins.
\newblock Eigenvalues of random lifts and polynomials of random permutation
  matrices.
\newblock {\em Ann. of Math. (2)}, 190(3):811--875, 2019.

\bibitem{Bowen_Book}
Rufus Bowen.
\newblock {\em Equilibrium states and the ergodic theory of {A}nosov
  diffeomorphisms}, volume 470 of {\em Lecture Notes in Mathematics}.
\newblock Springer-Verlag, Berlin, revised edition, 2008.
\newblock With a preface by David Ruelle.

\bibitem{MCN1}
Irving Calderon, Michael Magee, and Fr\'ed\'eric Naud.
\newblock Spectral gap for random schottky surfaces.
\newblock {\em To appear in Analysis and PDE}, 2024.

\bibitem{CGTVH}
Chi-Fang Chen, Jorge Garza-Vargas, Joel~A. Tropp, and Ramon van Handel.
\newblock A new approach to strong convergence.
\newblock {\em Ann. of Math. (2)}, 203(2), 2026.

\bibitem{Collins1}
Beno\^it Collins and Camille Male.
\newblock The strong asymptotic freeness of {H}aar and deterministic matrices.
\newblock {\em Ann. Sci. \'Ec. Norm. Sup\'er. (4)}, 47(1):147--163, 2014.

\bibitem{Davies}
E.~B. Davies.
\newblock {\em Spectral theory and differential operators}, volume~42 of {\em
  Cambridge Studies in Advanced Mathematics}.
\newblock Cambridge University Press, Cambridge, 1995.

\bibitem{DJPP}
Ioana Dumitriu, Tobias Johnson, Soumik Pal, and Elliot Paquette.
\newblock Functional limit theorems for random regular graphs.
\newblock {\em Probab. Theory Related Fields}, 156(3-4):921--975, 2013.

\bibitem{GKZ1}
Alice Guionnet, Manjunath Krishnapur, and Ofer Zeitouni.
\newblock The single ring theorem.
\newblock {\em Ann. of Math. (2)}, 174(2):1189--1217, 2011.

\bibitem{HS1}
B.~Helffer and J.~Sj\"ostrand.
\newblock \'equation de {S}chr\"odinger avec champ magn\'etique et \'equation
  de {H}arper.
\newblock In {\em Schr\"odinger operators ({S}\o nderborg, 1988)}, volume 345
  of {\em Lecture Notes in Phys.}, pages 118--197. Springer, Berlin, 1989.

\bibitem{Krantz_Bergman}
Steven~G. Krantz.
\newblock {\em Geometric analysis of the {B}ergman kernel and metric}, volume
  268 of {\em Graduate Texts in Mathematics}.
\newblock Springer, New York, 2013.

\bibitem{MN1}
Michael Magee and Fr\'ed\'eric Naud.
\newblock Explicit spectral gaps for random covers of {R}iemann surfaces.
\newblock {\em Publ. Math. Inst. Hautes \'Etudes Sci.}, 132:137--179, 2020.

\bibitem{Mayer}
Dieter~H. Mayer.
\newblock On the thermodynamic formalism for the {G}auss map.
\newblock {\em Comm. Math. Phys.}, 130(2):311--333, 1990.

\bibitem{Moaz}
Yotam Moaz.
\newblock Asymptotic independence for random permutations from surface groups.
\newblock {\em To appear in Geometriae Dedicata}, 2023.

\bibitem{Naud1}
Fr\'ed\'eric Naud.
\newblock Random {C}overs of {C}ompact {S}urfaces and {S}mooth {L}inear
  {S}pectral {S}tatistics.
\newblock {\em Ann. Henri Poincar\'e}, 27(1):347--373, 2026.

\bibitem{Nica2}
Alexandru Nica.
\newblock Asymptotically free families of random unitaries in symmetric groups.
\newblock {\em Pacific J. Math.}, 157(2):295--310, 1993.

\bibitem{Nica1}
Alexandru Nica.
\newblock On the number of cycles of given length of a free word in several
  random permutations.
\newblock {\em Random Structures Algorithms}, 5(5):703--730, 1994.

\bibitem{NicaSpeicher}
Alexandru Nica and Roland Speicher.
\newblock {\em Lectures on the combinatorics of free probability}, volume 335
  of {\em London Mathematical Society Lecture Note Series}.
\newblock Cambridge University Press, Cambridge, 2006.

\bibitem{PP}
William Parry and Mark Pollicott.
\newblock Zeta functions and the periodic orbit structure of hyperbolic
  dynamics.
\newblock {\em Ast\'erisque}, (187-188):268, 1990.

\bibitem{Pedersen}
Gert~K. Pedersen.
\newblock Operator differentiable functions.
\newblock {\em Publ. Res. Inst. Math. Sci.}, 36(1):139--157, 2000.

\bibitem{PS1}
Anke Pohl and Louis Soares.
\newblock Density of resonances for covers of {S}chottky surfaces.
\newblock {\em J. Spectr. Theory}, 10(3):1053--1101, 2020.

\bibitem{PM1}
Ch. Pommerenke.
\newblock {\em Boundary behaviour of conformal maps}, volume 299 of {\em
  Grundlehren der mathematischen Wissenschaften [Fundamental Principles of
  Mathematical Sciences]}.
\newblock Springer-Verlag, Berlin, 1992.

\bibitem{PZ1}
Doron Puder and Tomer Zimhoni.
\newblock Local statistics of random permutations from free products.
\newblock {\em Int. Math. Res. Not. IMRN}, (5):4242--4300, 2024.

\bibitem{Ruelle}
David Ruelle.
\newblock Zeta-functions for expanding maps and {A}nosov flows.
\newblock {\em Invent. Math.}, 34(3):231--242, 1976.

\bibitem{BSimon}
Barry Simon.
\newblock {\em Trace ideals and their applications}, volume 120 of {\em
  Mathematical Surveys and Monographs}.
\newblock American Mathematical Society, Providence, RI, second edition, 2005.

\end{thebibliography}
\end{document}